\let\@wraptoccontribs\wraptoccontribs
\DeclareFontFamily{OT1}{rsfs}{}
\DeclareFontShape{OT1}{rsfs}{n}{it}{<-> rsfs10}{}
\DeclareMathAlphabet{\mathscr}{OT1}{rsfs}{n}{it}
\newtheorem{theorem}[equation]{Theorem}
\newtheorem{lemma}[equation]{Lemma}
\newtheorem{proposition}[equation]{Proposition}
\newtheorem{corollary}[equation]{Corollary}
\theoremstyle{definition}
\newtheorem{example}[equation]{Example}
\theoremstyle{remark}
\newtheorem{remark}[equation]{Remark}
\numberwithin{equation}{subsection}
\DeclareMathAlphabet{\matheur}{U}{eur}{m}{n}
\newcommand{\FF}{\mathbb{F}}
\newcommand{\ZZ}{\mathbb{Z}}
\newcommand{\QQ}{\mathbb{Q}}
\newcommand{\LL}{\mathbb{L}}
\newcommand{\TT}{\mathbb{T}}
\newcommand{\GG}{\mathbb{G}}
\newcommand{\CC}{\mathbb{C}}
\newcommand{\ba}{\mathbf{a}}
\newcommand{\bff}{\mathbf{f}}
\newcommand{\bg}{\mathbf{g}}
\newcommand{\bh}{\mathbf{h}}
\newcommand{\bm}{\mathbf{m}}
\newcommand{\bn}{\mathbf{n}}
\newcommand{\bx}{\mathbf{x}}
\newcommand{\bv}{\mathbf{v}}
\newcommand{\cR}{\mathcal{R}}
\newcommand{\cT}{\mathcal{T}}
\newcommand{\eA}{\matheur{A}}
\newcommand{\ek}{\matheur{k}}
\newcommand{\eK}{\matheur{K}}
\newcommand{\eM}{\matheur{M}}
\newcommand{\eR}{\matheur{R}}
\newcommand{\eX}{\matheur{X}}
\newcommand{\rB}{\mathrm{B}}
\newcommand{\rP}{\mathrm{P}}
\DeclareMathOperator{\Aut}{Aut} \DeclareMathOperator{\Lie}{Lie}
\DeclareMathOperator{\Ker}{Ker} \DeclareMathOperator{\GL}{GL}
\DeclareMathOperator{\Mat}{Mat} \DeclareMathOperator{\Cent}{Cent}
\DeclareMathOperator{\End}{End} 
\DeclareMathOperator{\Span}{Span} \DeclareMathOperator{\Gal}{Gal}
\DeclareMathOperator{\Hom}{Hom} \DeclareMathOperator{\Res}{Res}
\DeclareMathOperator{\DR}{DR} 
\DeclareMathOperator{\Id}{Id} 
\DeclareMathOperator{\trdeg}{tr.deg} 
\DeclareMathOperator{\Ext}{Ext}
\newcommand{\ok}{\overline{k}}
\newcommand{\oL}{\overline{L}}
\newcommand{\sep}{\mathrm{sep}}
\newcommand{\tr}{\mathrm{tr}}
\newcommand{\tPsi}{\widetilde{\Psi}}
\newcommand{\iso}{\stackrel{\sim}{\to}}
\newcommand{\Ga}{{\GG_{\mathrm{a}}}}
\newcommand{\one}{\mathbf{1}}
\newcommand{\Qbar}{\overline{\QQ}}
\newcommand{\power}[2]{{#1 [\![ #2 ]\!]}}
\newcommand{\laurent}[2]{{#1 (\!( #2 )\!)}}
\newcommand{\Rep}[2]{\mathbf{Rep}(#1,#2)}
\begin{document}

\title[Algebraic independence of periods and logarithms]{Algebraic
independence of periods and logarithms of Drinfeld modules}

\author{Chieh-Yu Chang}
\address{Department of Mathematics, National Tsing Hua University and
National Center for Theoretical Sciences, Hsinchu City 30042, Taiwan
  R.O.C.}

\email{cychang@math.cts.nthu.edu.tw}

\author{Matthew A. Papanikolas}
\address{Department of Mathematics, Texas A{\&}M University, College
Station, TX 77843, USA} \email{map@math.tamu.edu}

\contrib[with an appendix by]{Brian Conrad}
\address{Department of Mathematics, Stanford University, Stanford, CA 94305, USA}
\email{conrad@math.stanford.edu}

\thanks{B.~Conrad was supported by NSF grant DMS-0917686.  C.-Y. Chang was supported by an NCTS
postdoctoral fellowship. M.~A.~Papanikolas was supported by NSF
Grant DMS-0903838.}

\subjclass[2000]{Primary 11J93; Secondary 11G09, 11J89}

\date{May 22, 2011}

\begin{abstract}
Let $\rho$ be a Drinfeld $A$-module with generic characteristic
defined over an algebraic function field.  We prove that all of the
algebraic relations among periods, quasi-periods, and logarithms of algebraic
points on $\rho$ are those coming from linear relations induced by
endomorphisms of $\rho$.
\end{abstract}

\keywords{Algebraic independence, Drinfeld modules, periods,
logarithms}

\maketitle
\section{Introduction}\label{Sec:Introduction}

\subsection{Drinfeld logarithms} In this paper we prove algebraic
independence results about periods, quasi-periods, logarithms, and
quasi-logarithms on Drinfeld modules, which are inspired by
conjectures from the theory of elliptic curves and abelian
varieties. More specifically, let $E$ be an elliptic curve defined
over the algebraic numbers $\Qbar$ with Weierstrass $\wp$-function
$\wp(z)$.  If $E$ does not have complex multiplication, then one
expects that the periods $\omega_1$, $\omega_2$ and quasi-periods
$\eta_1$, $\eta_2$ of $E$ are algebraically independent over
$\Qbar$; if $E$ has CM, one knows by a theorem of Chudnovsky that
their transcendence degree is $2$ over $\Qbar$, which also aligns
with expectations. Furthermore, if $u_1, \dots, u_n$ are complex
numbers with $\wp(u_i) \in \Qbar \cup \{\infty\}$ that are linearly
independent over $\End(E)$, then one expects that $u_1, \dots, u_n$
are algebraically independent over $\Qbar$.  Now the $\Qbar$-linear
relations among the elliptic logarithms or general abelian
logarithms of algebraic points have been studied extensively in the
past several decades, but algebraic independence of these logarithms
is still wide open (see \cite{BW,Waldschmidt}).  In the present
paper we prove complete analogues of these conjectures for Drinfeld
modules of arbitrary rank over general base rings.

Let $\FF_{q}$ be a finite field of $q$ elements in characteristic
$p$. Let $\eX$ be a smooth, projective, geometrically connected
curve over $\FF_{q}$. We fix a closed point $\infty$ on $\eX$, and
denote by $A$ the ring of functions on $\eX$ regular away from
$\infty$. Let $k$ be the function field of $\eX$ over $\FF_{q}$, and
let $k_{\infty}$ be the completion of $k$ at $\infty$. Let
$\CC_{\infty}$ be the completion of a fixed algebraic closure of
$k_{\infty}$ at $\infty$, and let $\ok$ be the algebraic closure of
$k$ in $\CC_{\infty}$.

In Drinfeld's \cite{Drinfeld} seminal work on elliptic modules, one
lets $\Lambda\subseteq \CC_{\infty}$ be an $A$-lattice of projective
rank $r$, i.e., a finitely generated discrete $A$-module in
$\CC_{\infty}$, and then forms the \emph{exponential function}
$e_{\Lambda}(z):=z\prod_{0\neq\lambda\in \Lambda}\left( 1-z/\lambda
\right)$. The Drinfeld $A$-module $\rho^{\Lambda}$ of rank $r$
associated to $\Lambda$ is the $\FF_{q}$-linear ring homomorphism
from $A$ into the endomorphism ring of the additive group
$\Ga/\CC_{\infty}$ that satisfies
$e_{\Lambda}(az)=\rho_{a}^{\Lambda}(e_{\Lambda}(z))$ for all $a\in
A$.  Thus $\rho^\Lambda$ induces an $A$-module structure on
$\CC_\infty$, which is isomorphic to $\CC_\infty/\Lambda$ via
$e_\Lambda(z)$.

We fix a rank $r$ Drinfeld $A$-module $\rho=\rho^{\Lambda}$ that is
defined over $\ok$; i.e., as a polynomial in the Frobenius
endomorphism, the coefficients of each $\rho_a$ are in $\ok$.  Let
$\exp_\rho(z) := e_{\Lambda}(z)$.  We set $\End(\rho):=\left\{x\in
\CC_{\infty} \mid x\Lambda\subseteq \Lambda \right\}$, which can be
identified with the endomorphism ring of $\rho$, and let $K_\rho$ be
its fraction field.

In analogy with the classical results of Schneider and Siegel on the
transcendence of elliptic logarithms of algebraic points, Yu
\cite{Yu86} proved the transcendence of nonzero elements $u\in
\CC_{\infty}$ with $\exp_{\rho}(u)\in\ok$.  Yu \cite{Yu97}
established further in this context the full analogue of Baker's
celebrated theorem on the linear independence of logarithms of
algebraic numbers.  Our first main result is stated as follows.

\begin{theorem}\label{T:Thm1Introd}
Let $\rho$ be a Drinfeld $A$-module defined over $\ok$. Let
$u_{1},\ldots,u_{n}\in \CC_{\infty}$ satisfy
$\exp_{\rho}(u_{i})\in\ok $ for $i=1,\ldots,n$. If
$u_{1},\ldots,u_{n}$ are linearly independent over $K_\rho$, then
they are algebraically independent over $\ok$.
\end{theorem}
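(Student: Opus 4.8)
The plan is to deduce Theorem~\ref{T:Thm1Introd} from the theory of $t$-motives and the transcendence criterion of Papanikolas (the ``ABP'' / motivic Galois group machinery), by constructing a single $t$-motive whose period matrix encodes $u_1,\dots,u_n$ and by computing its motivic Galois group. Concretely, attach to $\rho$ its associated $t$-motive $M_\rho$ (a rigid analytically trivial pre-$t$-motive of dimension $r$ over $\ok(t)$), and for each algebraic point $\alpha_i := \exp_\rho(u_i) \in \ok$ form the extension $M_i$ of the trivial $t$-motive $\one$ by $M_\rho$ whose period realizes $u_i$; such extension $t$-motives are the function-field analogue of the ``logarithmic'' $1$-motives used by W\"ustholz and Deligne in the classical setting. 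Then form the fiber product (Baer sum over $M_\rho$) $M := M_1 \times_{M_\rho} \cdots \times_{M_\rho} M_n$, a single $t$-motive sitting in an exact sequence $0 \to M_\rho \to M \to \one^{\oplus n} \to 0$. By Papanikolas's main theorem, the transcendence degree over $\ok$ of the entries of the period matrix of $M$ equals $\dim \Gamma_M$, the dimension of the motivic Galois group of $M$; and since the relevant period entries include $u_1,\dots,u_n$ (together with the periods and quasi-periods of $\rho$ and the constant $\tilde\pi$, i.e. the Carlitz period), it suffices to compute $\dim \Gamma_M$ and show it is large enough to force algebraic independence of the $u_i$ in particular.

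The key steps, in order: (1) \emph{Galois group of $M_\rho$.} Identify $\Gamma_{M_\rho}$ — this is where endomorphisms of $\rho$ enter. When $\rho$ has ``no CM'' one expects $\Gamma_{M_\rho}$ to be as large as possible ($\GL_r$, or the appropriate reductive group over the base field $\FF_q(t)$); in the CM case $\Gamma_{M_\rho}$ is the corresponding torus/centralizer, of dimension governed by $[K_\rho:k]$. This computation, or at least a good enough lower bound on $\dim\Gamma_{M_\rho}$, is available from prior work of Papanikolas and of Chang--Papanikolas on Drinfeld periods. (2) \emph{Unipotent radical.} The group $\Gamma_M$ is an extension of $\Gamma_{M_\rho}$ (acting on the sub) by a unipotent group $V \subseteq \Ga^{r}$-valued (the ``coordinates of the extension classes''), i.e. $V$ sits inside $\Hom(\one^{\oplus n}, M_\rho)$-type space. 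We must show $V$ has full dimension $rn$ — equivalently, that the $n$ extension classes $[M_i] \in \Ext^1(\one, M_\rho)$ are ``linearly independent'' in the strongest possible sense, accounting for the $\Gamma_{M_\rho}$-action. The linear independence of $u_1,\dots,u_n$ over $K_\rho = \End(\rho)\otimes k$ is precisely the input that prevents the extension classes from collapsing: an $\End(\rho)$-linear relation among the $u_i$ is exactly what would let a nontrivial endomorphism-combination of the $M_i$ split off $\one$, shrinking $V$. (3) Combine: $\dim\Gamma_M = \dim\Gamma_{M_\rho} + rn$, hence the transcendence degree of the full period matrix is $\dim\Gamma_{M_\rho}+rn$; subtracting off the at most $\dim\Gamma_{M_\rho}$ algebraic relations that can occur among the periods/quasi-periods alone forces the $n$ numbers $u_1,\dots,u_n$ to be algebraically independent over $\ok$ (in fact algebraically independent even from the periods and quasi-periods).

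The main obstacle I expect is step (2): proving that $V$ is all of the prescribed vector group, i.e. that no hidden algebraic relation forces the extension $M$ to partially split. This requires translating ``$u_1,\dots,u_n$ linearly independent over $K_\rho$'' into a statement about the $\Gamma_{M_\rho}$-submodule of $\Ext^1(\one,M_\rho)$ generated by the classes $[M_i]$, and then invoking a Galois-theoretic ``no-splitting'' principle: any proper subgroup of the expected $\Gamma_M$ would, via Tannakian duality, produce a nonzero morphism of $t$-motives exhibiting a $K_\rho$-linear dependence among the $u_i$ — contradicting the hypothesis. Making this dictionary precise, and in particular handling the CM case where $\Gamma_{M_\rho}$ is smaller and one must track how $\End(\rho)$ acts on both the sub-object and on the extension classes simultaneously, is the technical heart of the argument; everything else (construction of $M_\rho$ and the $M_i$, rigid analytic triviality, identification of period entries with the $u_i$, and the application of Papanikolas's dimension theorem) is by now standard in this area.
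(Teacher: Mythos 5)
Your overall route is the same as the paper's: pass to the $t$-motive $M_{\rho}$, realize the $u_i$ as extension classes in $\Ext^{1}_{\cT}(\one,M_{\rho})$, show the Galois group of the resulting $t$-motive is an extension of $\Gamma_{M_{\rho}}$ by a full unipotent vector group, and apply Papanikolas's dimension theorem. However, the key dictionary in your step (2) is stated incorrectly, and this breaks your step (3) as written. A combination ${e_1}_{*}X_{1}+\cdots+{e_n}_{*}X_{n}$ of the extension classes is trivial in $\Ext^{1}_{\cT}(\one,M_{\rho})$ precisely when $\sum_i e_i u_i$ falls into the $K_\rho$-span of the period lattice $\Lambda_\rho$ (this is what drops out of the splitting computation, cf.\ \eqref{E:Relgammaomega}), not merely when the $u_i$ satisfy a $K_\rho$-relation among themselves. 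The hypothesis of Theorem~\ref{T:Thm1Introd} allows, e.g., $u_1=\omega_1$ (then $\alpha_1=0$ and $X_1$ is literally split) or $u_1$ a logarithm of a torsion point (then the class is torsion, hence zero in the $K_\rho$-vector space $\Ext^1_{\cT}(\one,M_\rho)$); in such cases the unipotent part has dimension $<rn$ and your claimed equality $\dim\Gamma_M=\dim\Gamma_{M_\rho}+rn$ is false, so the ``subtract the period relations'' count does not conclude. The correct independence criterion (the paper's Theorem~\ref{T:IndMiExt} and Theorem~\ref{T:MainThm2}) assumes that $u_1,\dots,u_n$ together with a $K_\rho$-basis of $k\otimes_A\Lambda_\rho$ are $K_\rho$-linearly independent, and Theorem~\ref{T:Thm1Introd} is then recovered by a basis-swapping reduction modulo the $K_\rho$-span of the periods plus an invertible linear-algebra step over $\ok$ (Corollary~\ref{C:CorThm2} and \S\ref{sub:generalA}); this reduction is missing from your plan.

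Two further points are treated as known but are in fact substantive. First, your step (1) is not available from prior work: for arbitrary rank and arbitrary endomorphism ring, the identification $\Gamma_{M_\rho}=\Cent_{{\GL_r}/\ek}(\eK_\rho)$ is one of the main new theorems of the paper (Theorem~\ref{T:MainThm1}), proved by showing via Anderson generating functions that the image of the $t$-adic Galois representation lies in $\Gamma_{\Psi_\rho}(\laurent{\FF_q}{t})$ and then invoking Pink's openness theorem; earlier results covered only the Carlitz module and rank-$2$ non-CM cases. Second, even granting the $\eK_\rho$-independence of the extension classes, deducing that the unipotent part $W$ has full dimension requires knowing that $W$ is a smooth $\Gamma_{M_{\rho}}$-submodule of $(M_{\rho}^{\rB})^{n}$ and that $\Rep{\Gamma_{M_{\rho}}}{\ek}$ is completely reducible; both rest on the explicit form of $\Gamma_{M_\rho}$, and the smoothness/submodule statement in the inseparable CM case is exactly what Conrad's appendix (Proposition~\ref{glvprop}, via Lemma~\ref{L:SmoothG}) is needed for. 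Finally, the theorem concerns general $A$, so one must also reduce to $\FF_q[t]$ by viewing $\rho$ as a Drinfeld $\FF_q[t]$-module with complex multiplication (the lattice and $K_\rho$ are unchanged), a step your proposal omits.
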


Previous work of the authors \cite{CP,Papanikolas} has established
special cases of this theorem when $A$ is a polynomial ring and,
either $\rho$ has rank $1$, i.e., the Carlitz module case, or $\rho$
has rank $2$ without CM and the characteristic is odd.

\subsection{The period matrix and quasi-periodic functions} Based on
the analogy between Drinfeld modules and elliptic curves, Anderson,
Deligne, Gekeler, and Yu developed a theory of quasi-periodic
functions for $\rho$ and defined the \emph{de Rham group}
$H_{\DR}(\rho)$ (see \cite{Gekeler89, Yu90}). The group
$H_{\DR}(\rho)$ is an $r$-dimensional vector space over $\CC_\infty$
that parametrizes extensions of $\rho$ by $\Ga$.  Associated to each
$\delta \in H_{\DR}(\rho)$ there is the entire \emph{quasi-periodic
function} $F_\delta(z)$, which sets up the de Rham isomorphism (see
\cite{Gekeler89}),
\begin{equation}\label{E:deRhamIsomIntro}
\delta\mapsto (\omega\mapsto F_{\delta}(\omega)) :H_{\DR}(\rho)\iso
\Hom_{A}(\Lambda, \CC_{\infty}).
\end{equation}
See \S\ref{sub:PeriodsQuasiP} for details about the de Rham group
and quasi-periodic functions.

Fixing a maximal $k$-linearly independent set $\{ \omega_{1},\ldots,
\omega_{r}\}$ of $\Lambda$ and a basis
$\{\delta_{1},\ldots,\delta_{r}\}$ of $H_{\DR}(\rho)$ defined over
$\ok$, the $r\times r$ matrix
$\rP_{\rho}:=\left(F_{\delta_{j}}(\omega_{i}) \right)$ is a
\emph{period matrix} of $\rho$.  Although the matrix $\rP_\rho$ is
not uniquely defined, the field $\ok(\rP_\rho)$ is independent of
the choices made.  Much as in the case of elliptic curves,
$\delta_1, \dots, \delta_r$ can be chosen appropriately so that the
first column of $\rP_\rho$ consists of elements of $\Lambda$
(periods of the first kind) and the entries of the remaining columns
are called quasi-periods (periods of the second kind).

Each entry of $\rP_{\rho}$ is transcendental over $k$ by the work of
Yu \cite{Yu86, Yu90}. Let $s$ be the degree of $K_\rho$ over $k$.
Using Yu's sub-$t$-module theorem \cite[Thm.~0.1]{Yu97}, Brownawell
proved that all the $\ok$-linear relations among the entries of
$\rP_{\rho}$ are those induced from endomorphisms of $\rho$; in
particular, the dimension of the $\ok$-vector space spanned by the
entries of $\rP_{\rho}$ is $r^{2}/s$ (cf.
\cite[Prop.~2]{Brownawell}). The period conjecture of Brownawell-Yu
asserts that among the entries of $\rP_{\rho}$, these linear
relations account for all the $\ok$-algebraic relations.  The second
main theorem of the present paper is to prove this assertion.

\begin{theorem}\label{T:Thm2Introd}
Let $\rho$ be a Drinfeld $A$-module of rank $r$ defined over $\ok$,
and let $s$ be the degree of $K_\rho$ over $k$. Let $\rP_{\rho}$ be
the period matrix of $\rho$. Then we have
\[
\trdeg_{\ok} \ok(\rP_{\rho})=r^{2}/s.
\]
\end{theorem}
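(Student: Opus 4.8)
The plan is to realize the period matrix $\rP_\rho$ as (part of) the period matrix of a $t$-motive in the sense of Anderson–Papanikolas, and then apply the main transcendence theorem of Papanikolas (the $\Gm$-version of the Grothendieck period conjecture for $t$-motives) which states that for such an object the transcendence degree of the period field over $\ok$ equals the dimension of the Galois group of the associated Tannakian category. Concretely, to the Drinfeld module $\rho$ together with the de Rham data $\delta_1,\dots,\delta_r$ one attaches a $t$-module, or rather an appropriate pre-$t$-motive $\eM_\rho$ whose rigid analytic trivialization has entries built from $\exp_\rho$ and the quasi-periodic functions $F_{\delta_j}$; evaluating the trivializing matrix at $t=\theta$ (where $\theta$ is the image of a chosen coordinate on $\eX$) produces, up to the standard factor coming from the Anderson generating functions, exactly the entries $F_{\delta_j}(\omega_i)$. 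This is the construction already used in \cite{CP, Papanikolas} in the rank-$1$ and rank-$2$ cases, and the first task is to carry it out for arbitrary rank $r$ and arbitrary base ring $A$.

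The key steps, in order, are as follows. First, I would fit $\rho$ into the dual $t$-motive framework: let $\eM$ be the dual $t$-motive of $\rho$, a free $\ok[t]$-module (or $\ok(t)$-module after the relevant base change) of rank $r$, equipped with the twisted Frobenius, and show that its Betti and de Rham realizations give the lattice $\Lambda$ and the de Rham group $H_{\DR}(\rho)$ respectively, with the comparison isomorphism $\eqref{E:deRhamIsomIntro}$ recovering the period matrix. Second, I would identify the Tannakian Galois group $\Gamma_\rho := \Gamma_{\eM}$ attached to $\eM$ inside $\GL_r$: it must commute with the action of $\End(\rho)\otimes_A k = K_\rho$, so it is contained in the centralizer, which (since $K_\rho$ acts on an $r$-dimensional space making it free of rank $r/s$ over $K_\rho$) is a form of $\GL_{r/s}$ over $K_\rho$, hence has dimension at most $(r/s)^2 \cdot s = r^2/s$ over $\ok$. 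Third — and this is the crux — I would prove the reverse inequality $\dim \Gamma_\rho \geq r^2/s$, i.e.\ that $\Gamma_\rho$ is the \emph{full} centralizer. For this I would argue that $\Gamma_\rho$ surjects onto the Galois group of the Carlitz-type determinant motive (forcing the image in the appropriate $\Gm$ or norm-one torus to be everything, which contributes the transcendence of the "fundamental period" $\tilde\pi$ and pins down the similitude/determinant factor), and that the unipotent and semisimple parts of $\Gamma_\rho$ are as large as possible because the logarithms and quasi-logarithms involved — here the quasi-periods — satisfy no relations beyond the $K_\rho$-linear ones, which is precisely Brownawell's linear independence result quoted before the theorem. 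The structure-theoretic input is that a closed subgroup of $\Res_{K_\rho/k}\GL_{r/s}$ (base-changed to $\ok$) which surjects onto the torus of similitude characters and contains enough unipotents must be everything; combined with the fact that $\rho$ has generic characteristic (so the relevant $t$-motive has "maximal" monodromy subject only to the endomorphism constraint), this gives equality.

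Once $\dim \Gamma_\rho = r^2/s$ is established, the transcendence theorem of \cite{Papanikolas} gives $\trdeg_{\ok}\ok(\rP_\rho) = \dim\Gamma_\rho = r^2/s$ directly, since $\ok(\rP_\rho)$ is generated by the entries of a rigid analytic trivialization evaluated at $t=\theta$ and the theorem equates that transcendence degree with the Galois group dimension. The main obstacle is Step three: proving that the Galois group is the full centralizer of $K_\rho$ and not some proper subgroup. In the CM case this is delicate because the centralizer is a nonsplit form of $\GL_{r/s}$ over a field, and one must rule out, for instance, the Galois group landing in a proper reductive subgroup or a proper parabolic — the former would contradict the known transcendence of individual periods and quasi-periods, the latter would contradict Brownawell's linear independence, but assembling these into a clean group-theoretic statement (likely via a classification of subgroups of $\GL_{r/s}$ over $K_\rho$ containing a maximal torus and acting irreducibly, or via Anderson's theory of uniformizability forcing semisimplicity of a certain associated object) is where the real work lies. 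A secondary technical point is handling general $A$ (not just $\FF_q[t]$): the dual $t$-motive must be set up over the right coefficient ring, and the determinant/Carlitz factor must be replaced by the appropriate rank-one object, whose period is a generalized Carlitz period whose transcendence is itself needed as input.
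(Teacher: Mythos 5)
Your overall framework (attach a rigid analytically trivial $t$-motive to $\rho$ whose trivialization, specialized at $t=\theta$, generates $\ok(\rP_\rho)$; apply Papanikolas's theorem to equate $\trdeg_{\ok}\ok(\rP_\rho)$ with the dimension of the Galois group; bound that group above by the centralizer of $K_\rho$, of dimension $r^2/s$) is indeed the paper's framework. But your Step three --- the lower bound, i.e.\ that the Galois group is the \emph{full} centralizer --- is a genuine gap, and you essentially concede it (``this is where the real work lies''). The inputs you propose cannot close it: Brownawell's result and Yu's results give only $\ok$-\emph{linear} independence of the entries of $\rP_\rho$ (resp.\ transcendence of individual entries), and such statements do not exclude the Galois group being a proper reductive subgroup of the centralizer containing a maximal torus --- for instance a symplectic, orthogonal or unitary form, or a $\GL$ over an intermediate field between $\ek$ and $\eK_\rho$ --- which would impose higher-degree algebraic relations among the $F_{\delta_j}(\omega_i)$. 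Excluding exactly such relations is the content of the theorem, so a ``no linear relations plus enough unipotents, hence everything'' classification argument is either circular or unsubstantiated; note also that since $M_\rho$ is simple (Corollary \ref{C:SimpleMrho}) the parabolic alternative you mention is not the real issue.

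The paper closes this gap with an arithmetic input absent from your sketch: using the Galois equivariance of the Anderson generating functions (Lemma \ref{L:GalComp}, Corollary \ref{C:GalUpsilon}) it proves that $\varphi_t(\Gal(K^{\sep}/K))\subseteq \Gamma_{\Psi_\rho}(\laurent{\FF_q}{t})$ (Theorem \ref{T:tadictoDiff}), and then invokes Pink's theorem --- the analogue of the Mumford--Tate conjecture for Drinfeld modules --- which says this Galois image is open in $\Cent_{\GL_r(\laurent{\FF_q}{t})}(\eK_{\rho})$. Combined with the Tannakian inclusion $\Gamma_{\Psi_\rho}\hookrightarrow\Cent_{{\GL_r}/\ek}(\eK_{\rho})$ and connectedness of the centralizer, this forces $\Gamma_{\Psi_\rho}=\Cent_{{\GL_r}/\ek}(\eK_{\rho})$ of dimension $r^2/s$ (Theorem \ref{T:MainThm1}); without this theorem or an equivalent substitute your argument does not go through. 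A secondary divergence: for general $A$ the paper does not rebuild the motive over $A$ as you suggest; it regards $\rho$ as a Drinfeld $\FF_q[t]$-module with complex multiplication by its endomorphism ring and deduces Theorem \ref{T:Thm2Introd} from the quasi-logarithm result (Corollary \ref{C:CorThm2}, Theorem \ref{T:Thm3Introd}) applied to periods $\omega_1,\dots,\omega_{r/s}$ linearly independent over $K_\rho$.
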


Special cases of this result in rank $2$ were previously established
by Thiery \cite{Thiery} (algebraic independence in the CM case),
David and Denis \cite{DavidDenis} (quadratic independence), and the
authors \cite{CP} (algebraic independence in the non-CM, odd
characteristic case).  Based on this theorem, we further prove the
following algebraic independence result on Drinfeld quasi-logarithms. Theorem~\ref{T:Thm3Introd} has
applications to algebraic independence results about periods of the
first, second, and third kind for rank $2$ Drinfeld modules
\cite{Chang}.

\begin{theorem}\label{T:Thm3Introd}
Let $\rho$ be a Drinfeld $A$-module of rank $r$ defined over $\ok$.
Let $\delta_{1},\ldots,\delta_{r}$ be a basis of $H_{\DR}(\rho)$
defined over $\ok$. Let $u_{1},\ldots,u_{n}\in \CC_{\infty}$ satisfy
$\exp_{\rho}(u_{i})\in \ok$ for $i=1,\ldots,n$.  If
$u_{1},\ldots,u_{n}$ are linearly independent over $K_\rho$, then
the $rn$ quasi-logarithms $\cup_{i=1}^{n} \cup_{j=1}^{r} \left\{
F_{\delta_{j}}(u_{i}) \right\}$ are algebraically independent over
$\ok$.
\end{theorem}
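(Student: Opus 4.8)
The plan is to deduce this from Theorems~\ref{T:Thm1Introd} and~\ref{T:Thm2Introd} by the period-interpretation (Anderson $t$-motive) machinery, combining all the relevant periods, quasi-periods, and (quasi-)logarithms into a single $t$-motive and applying the transcendence criterion underlying those theorems. First I would set up, for the given Drinfeld module $\rho$ and the points $u_1,\dots,u_n$, a $t$-motive $\eM$ built as an iterated extension: take the $t$-motive $\eM_\rho$ attached to $\rho$ (whose periods realize the period matrix $\rP_\rho$), and then form an extension of $\mathbf{1}^{\oplus ?}$ by $\eM_\rho$ (or rather a suitable pushout/Baer sum construction) encoding each algebraic point $\exp_\rho(u_i)\in\ok$, exactly as in the rigid analytic trivialization picture used for the rank-one Carlitz case and the rank-two cases in \cite{CP,Papanikolas}. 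The periods of this larger $t$-motive $\eM$ will be built from the entries of $\rP_\rho$ together with the $rn$ quasi-logarithms $F_{\delta_j}(u_i)$ (the de Rham pairing \eqref{E:deRhamIsomIntro} is precisely what identifies these quasi-logarithms as period-type coordinates of the extension classes).

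Next I would compute the dimension of the Galois group (the ``motivic Galois group'') $\Gamma_{\eM}$ of this $t$-motive, or equivalently the dimension of the relevant algebraic group acting on the rigid analytic trivialization, using the theory of Tannakian categories of $t$-motives. One has a short exact sequence of algebraic groups reflecting the layered structure of $\eM$: the quotient is $\Gamma_{\eM_\rho}$, whose dimension is $r^2/s$ by Theorem~\ref{T:Thm2Introd} (this is the content of that theorem at the motivic level), and the kernel is a vector group $\Ga^{N}$ sitting inside the ``unipotent'' part coming from the extension classes of the $u_i$. The crucial input is that $N$ equals $rn$: the $K_\rho$-linear independence of $u_1,\dots,u_n$ forces the extension classes to be ``as independent as possible'' over the endomorphism-twisted coefficient structure, so the unipotent radical is full. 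Granting this, $\dim \Gamma_{\eM} = r^2/s + rn$, and then the transcendence theorem of Papanikolas (which says the transcendence degree of the period coordinates equals $\dim \Gamma_{\eM}$) gives $\trdeg_{\ok}\ok(\rP_\rho, \{F_{\delta_j}(u_i)\}) = r^2/s + rn$. Since Theorem~\ref{T:Thm2Introd} already accounts for $r^2/s$ of these as $\trdeg_{\ok}\ok(\rP_\rho)$, the remaining $rn$ quantities $F_{\delta_j}(u_i)$ must be algebraically independent over $\ok(\rP_\rho)$, hence a fortiori over $\ok$.

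The main obstacle I expect is the computation of the unipotent kernel, i.e.\ showing $N = rn$. It is not enough to know that $u_1,\dots,u_n$ are $K_\rho$-linearly independent; one must verify that no nontrivial algebraic relation among the extension classes can be manufactured using the endomorphism action of $\End(\rho)$ together with the already-present relations among the periods of $\eM_\rho$. Concretely, this amounts to a careful analysis of $\Ext^1$ groups in the Tannakian category: one must show that the sub-$t$-motive generated by the extension classes, modulo $\eM_\rho$, has the maximal possible dimension, and this requires translating the $K_\rho$-linear independence hypothesis into a statement about linear disjointness of certain period lattices — essentially a refinement, at the level of the full $t$-motive, of Brownawell's linear-relations result and of Yu's sub-$t$-module theorem. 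A secondary technical point is handling the case of general base ring $A$ (not just a polynomial ring), where the $t$-motive attached to $\rho$ and its de Rham realization must be set up with care; the appendix by Conrad presumably supplies exactly the algebro-geometric foundations needed to make $H_{\DR}(\rho)$ and the extension-class picture behave correctly in this generality, so I would lean on that to make the Tannakian formalism go through uniformly.
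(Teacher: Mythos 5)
Your outline follows the same route as the paper (extension $t$-motives for the $u_i$, a short exact sequence $1\to W\to\Gamma_N\to\Gamma_{M_\rho}\to 1$, fullness of the unipotent kernel from linear independence, then Papanikolas's dimension $=$ transcendence degree theorem), but as written it contains a step that is actually false and leaves the genuine crux unproved. The false step is the final deduction: you claim $\dim\Gamma_{\eM}=r^2/s+rn$, hence that the $F_{\delta_j}(u_i)$ are algebraically independent over $\ok(\rP_\rho)$. The hypothesis of Theorem~\ref{T:Thm3Introd} only says $u_1,\dots,u_n$ are $K_\rho$-linearly independent among themselves; it does not prevent some $u_i$ from lying in the $K_\rho$-span of the periods (e.g.\ $n=1$, $u_1=\omega_1$, which satisfies the hypothesis). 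In that case the corresponding extension class is trivial, the total transcendence degree is not $r^2/s+rn$, and independence over $\ok(\rP_\rho)$ fails, although the theorem's conclusion (independence over $\ok$) still holds. The paper's Theorem~\ref{T:MainThm2} therefore assumes that $\omega_1,\dots,\omega_{r/s},u_1,\dots,u_n$ are jointly $K_\rho$-independent, and Corollary~\ref{C:CorThm2} reduces the general hypothesis to that situation by passing to a $K_\rho$-basis of $\Span_{K_\rho}(\omega_1,\dots,\omega_r,u_1,\dots,u_n)$ and swapping basis elements so the $u_i$ appear in it, using the linearity of $F_\delta$ in $\delta$ and \eqref{E:DiffFdelta}. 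Some such reduction is indispensable in your argument and is missing.

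Second, the step you flag as the ``main obstacle'' (that the kernel has dimension $rn$) is exactly where the real work lies, and it cannot be run from Theorem~\ref{T:Thm2Introd} alone: knowing $\dim\Gamma_{M_\rho}=r^2/s$ does not suffice. The paper needs the explicit description $\Gamma_{M_\rho}=\Cent_{\GL_r/\ek}(\eK_\rho)$ of Theorem~\ref{T:MainThm1} (proved via Anderson generating functions, the $t$-adic Tate module, and Pink's theorem) in order to (i) identify $\Gamma_{M_\rho}$ as a Weil restriction so that Lemma~\ref{L:SmoothG} identifies $W$ with $U^{\rB}$ for a sub-$t$-motive $U\subseteq M_\rho^n$, and (ii) obtain complete reducibility of $\Rep{\Gamma_{M_\rho}}{\ek}$ (Corollary~\ref{C:CompReducible}), which feeds Hardouin's pushout argument; fullness of $W$ then reduces to the $\eK_\rho$-linear independence of the classes $X_i$ in $\Ext^1_{\cT}(\one,M_\rho)$, proved by the specialization analysis of Proposition~\ref{P:F11} and Theorem~\ref{T:IndMiExt}. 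Finally, your account of the remaining technical points is misdirected: Conrad's appendix has nothing to do with general $A$ or with foundations for $H_{\DR}(\rho)$; it supplies the purely group-theoretic fact (subgroups of $G\ltimes V$ surjecting onto $G$) needed for Lemma~\ref{L:SmoothG} when $K_\rho$ is inseparable over $k$. The general-$A$ case is handled separately and much more simply, by viewing $\rho$ as a Drinfeld $\FF_q[t]$-module with complex multiplication and restricting biderivations, using $F_\delta=F_{\delta|_{\FF_q[t]}}$ and the injection $H_{\DR}(\rho,\eA)\hookrightarrow H_{\DR}(\rho,\FF_q[t])$.
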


\subsection{Outline}
To prove these results we travel the route along the deep connection
between Drinfeld modules and Anderson's theory of $t$-motives
\cite{Anderson86}.  One of our primary tools is the main theorem of
\cite{Papanikolas}, which asserts that the dimension of the
Tannakian Galois group of a $t$-motive, defined as the Galois group
of a system of difference equations, is equal to the transcendence
degree of its associated period matrix (see \S\ref{sec:t-motives}
for relevant background).  This theorem itself is rooted in a linear
independence criterion developed by Anderson, Brownawell, and the
second author \cite{ABP}. Thus the overall strategy for proving each
of Theorems~\ref{T:Thm1Introd}, \ref{T:Thm2Introd},
and~\ref{T:Thm3Introd} is to create a suitable $t$-motive such that
the special values in question are related to its period matrix and
then calculate its associated Galois group.

One of our goals in writing this paper has been to establish these
algebraic independence results as explicitly as
possible, beyond what is supplied in the general theory of
\cite{Papanikolas}.  If one is purely interested in only the
transcendence degree of these values, then it suffices by
\cite{Papanikolas} to calculate the dimension of the Galois group of
the $t$-motive.   Then using upper bounds on the transcendence
degree coming from well-known $\ok$-linear relations among periods,
quasi-periods, and logarithms of Drinfeld modules, one can in
principle obtain the results of the present paper simply from the
dimension of the Galois group.  However, by keeping track of
detailed information about these $t$-motives and the solutions of
their associated difference equations, we are able to (1) compute
the associated Galois groups explicitly, and (2) demonstrate
explicitly the connection between these $t$-motives and the known
$K_\rho$-linear relations among periods and logarithms (see
\S\ref{sec:L.IndepofMi}--\ref{sec:DrinLogs}).  Thus, we recover
linear independence results of Brownawell~\cite{Brownawell} and
Yu~\cite{Yu97} in the process, but without the need to appeal to the
theory of $t$-modules as Yu's sub-$t$-module theorem requires.
Furthermore, the techniques of proof are more robust than the ones
presented in \cite{CP}, where the case of rank $2$ Drinfeld
$\FF_q[t]$-modules, without CM and in odd characteristic, was
considered.  We have endeavored to highlight the advances beyond
\cite{CP} in this paper.

Throughout this paper we first consider the case when $A$ is a
polynomial ring, and then extend the results to general $A$ via
complex multiplication. In \S\ref{sec:DrinModules}, we study the
$t$-motive $M_{\rho}$ associated to a given Drinfeld module $\rho$
and its Galois group $\Gamma_{M_{\rho}}$. The main result of
\S\ref{sec:DrinModules} is to use Anderson generating functions to
prove that the image of the Galois representation on the $t$-adic
Tate module of $\rho$ is naturally contained inside the $t$-adic
valued points of $\Gamma_{M_{\rho}}$. Therefore, using a fundamental
theorem of Pink~\cite{Pink97} on the openness of the image of the
Galois representation on the $t$-adic Tate module of $\rho$, we
obtain an explicit description of $\Gamma_{M_{\rho}}$ (Theorem
\ref{T:MainThm1}). This enables us to prove Theorem
\ref{T:Thm2Introd}.

Let $\cT$ denote the category of $t$-motives.  Given
$u_{1},\ldots,u_{n}$ as in Theorems~\ref{T:Thm1Introd}
and~\ref{T:Thm3Introd}, in \S\ref{sec:L.IndepofMi} we construct
$t$-motives $X_{1}, \ldots, X_{n}$ representing classes in
$\Ext^1_{\cT}(\one,M_{\rho})$ such that the union of the entries of
their period matrices contains $\cup_{i=1}^{n} \cup_{j=1}^{r} \{
F_{\delta_{j}}(u_{i})\}$.  If $u_1, \dots, u_n$ together with a
$K_\rho$-basis of $k \otimes_{A} \Lambda_\rho$ are linearly
independent over $K_\rho$, then we use techniques of Frobenius
difference equations  to prove that $X_1, \dots, X_n$ are
$\End_{\cT}(M_\rho)$-linearly independent in
$\Ext_{\cT}^1(\one,M_{\rho})$ (Theorem~\ref{T:IndMiExt}). As
observed by Hardouin \cite{Hardouin}, the
$\End_{\cT}(M_{\rho})$-linear independence of $X_{1},\ldots,X_{n}$
provides information for the dimension of the Galois group of
$\oplus_{i=1}^{n}X_{i}$. Based on Theorem~\ref{T:IndMiExt}, in
\S\ref{sub:CalGalGroup} we give a detailed proof of Theorem
\ref{T:Thm3Introd} in the polynomial ring case
(Corollary~\ref{C:CorThm2}).  Using Theorem~\ref{T:MainThm1} and
Corollary~\ref{C:CorThm2}, we then prove the general cases of
Theorems~\ref{T:Thm1Introd}, \ref{T:Thm2Introd},
and~\ref{T:Thm3Introd} in \S\ref{sub:generalA}.

Note that in an earlier version of this paper,
Theorem~\ref{T:Thm3Introd} was worked out by the authors under the
assumption that $K_{\rho}$ is separable over $k$.  This
separability hypothesis boiled down to a question on algebraic groups
(cf. Lemma~\ref{L:SmoothG}), and the purpose of Appendix~\ref{appendix}
provided by B.~Conrad is to remove the hypothesis of separability.

\subsection*{Acknowledgements} We thank J.~Yu for many helpful discussions, suggestions and encouragement throughout this project.
We particularly thank NCTS for financial support so that we were
able to visit each other over the past several years.  We further thank B.~Conrad for providing us with the appendix, crucially helping us handle inseparability issues occurring in the case of quasi-logarithms. Finally, we thank the referees for several helpful suggestions.

\section{$t$-motives and difference Galois
groups}\label{sec:t-motives}

\subsection{Notation and preliminaries}
Until \S\ref{sec:DrinLogs}, where we treat the case of general $A$,
we will restrict our base ring $A$ to be the polynomial ring.  We
adopt the following notation.
\begin{longtable}{p{0.5truein}@{\hspace{5pt}$=$\hspace{5pt}}p{5truein}}
$\FF_q$ & the finite field with $q$ elements, for $q$ a power of a
prime number $p$. \\
$\theta$, $t$, $z$ & independent variables. \\
$A$ & $\FF_q[\theta]$, the polynomial ring in $\theta$ over $\FF_q$.
\\
$k$ & $\FF_q(\theta)$, the fraction field of $A$.\\
$k_\infty$ & $\laurent{\FF_q}{1/\theta}$, the completion of $k$ with
respect to the place at infinity.\\
$\overline{k_\infty}$ & a fixed algebraic closure of $k_\infty$.\\
$\ok$ & the algebraic closure of $k$ in $\overline{k_\infty}$.\\
$\CC_\infty$ & the completion of $\overline{k_\infty}$ with respect
to
the canonical extension of $\infty$.\\
$\eA$ & $\FF_q[t]$, the polynomial ring in $t$ over $\FF_q$.\\
$\ek$ & $\FF_q(t)$, the fraction field of $\eA$.\\
$\TT$ & $\{ f \in \power{\CC_\infty}{t} \mid \textnormal{$f$
converges
on $|t|_\infty \leq 1$} \}$, the Tate algebra over $\CC_\infty$.\\
$\LL$ & the fraction field of $\TT$.\\
${\GL_{r}}/F$ & for a field $F$, the $F$-group scheme of invertible
$r \times r$ matrices.
\end{longtable}

For $n \in \ZZ$, given a Laurent series $f = \sum_i a_i t^i \in
\laurent{\CC_\infty}{t}$, we define the $n$-fold twist of $f$ by
$f^{(n)} = \sum_i a_i^{q^n} t^i$.  For each $n$, the twisting
operation is an automorphism of $\laurent{\CC_\infty}{t}$ and
stabilizes several subrings, e.g., $\power{\ok}{t}$, $\ok[t]$,
$\TT$, and $\LL$.  For any matrix $B$ with entries in
$\laurent{\CC_\infty}{t}$, we define $B^{(n)}$ by the rule
${B^{(n)}}_{ij} = B_{ij}^{(n)}$.  Also we note (cf.\
\cite[Lem.~3.3.2]{Papanikolas})
\[
  \FF_q[t] = \{ f \in \TT \mid f^{(-1)} = f \}, \quad \FF_q(t) = \{
  f \in \LL \mid f^{(-1)} = f \}.
\]

Given a ring $R \subseteq \laurent{\CC_\infty}{t}$ that is invariant
under $n$-fold twisting for all $n$, we define twisted polynomial
rings $R[\sigma]$, $R[\sigma^{-1}]$, and $R[\sigma,\sigma^{-1}]$,
subject to the relations
\[
  \sigma^{i} f = f^{(-i)}\sigma^{i}, \quad f \in R, i \in \ZZ.
\]
As a matter of notation, we will often write $\tau$ for
$\sigma^{-1}$, and so $R[\tau] = R[\sigma^{-1}]$.  If $R$ itself is
a polynomial ring in $t$, say $R = S[t]$, then we will write
$R[\sigma] = S[t,\sigma]$ instead of $S[t][\sigma]$.

\subsection{Pre-$t$-motives and $t$-motives}
We briefly review the definitions and results we will need about
pre-$t$-motives and $t$-motives.  The reader is directed to \cite[\S
3]{Papanikolas} for more details.  A \emph{pre-$t$-motive} $M$ is a
left $\ok(t)[\sigma,\sigma^{-1}]$-module that is finite dimensional
over $\ok(t)$.  If $\bm \in \Mat_{r\times 1}(M)$ is a $\ok(t)$-basis
of $M$, then there is a matrix $\Phi \in \GL_r(\ok(t))$ so that
$\sigma \bm = \Phi \bm$.  We say that $M$ is \emph{rigid
analytically trivial} if there exists $\Psi \in \GL_r(\LL)$ so that
\[
  \Psi^{(-1)} = \Phi\Psi.
\]
If we let $\sigma$ act diagonally on $\LL \otimes_{\ok(t)} M$ and
let $M^{\rB}$ be the $\ek$-subspace fixed by $\sigma$, then $M$ is
rigid analytically trivial if and only if $\dim_{\ek} M^{\rB} = r$.
In this case, the entries of $\Psi^{-1}\bm$ form a $\ek$-basis of
$M^{\rB}$ \cite[Prop.~3.4.7]{Papanikolas}.  By
\cite[Thm.~3.3.15]{Papanikolas} the category of rigid analytically
trivial pre-$t$-motives, denoted by $\cR$, is a neutral Tannakian
category over $\ek$ with fiber functor $M \mapsto M^{\rB}$ (see
\cite[\S II]{DMOS} for more details on Tannakian categories).  Its
trivial object is denoted by~$\one$.  For a pre-$t$-motive $M\in
\cR$, we let $\cR_M$ denote the strictly full Tannakian subcategory
generated by $M$.  In this way, $\cR_M$ is equivalent to the
category of representations over $\ek$ of an affine algebraic group
scheme $\Gamma_M$ over $\ek$, i.e., $\cR_M \approx
\Rep{\Gamma_M}{\ek}$.  The group $\Gamma_M$ is called the
\emph{Galois group of $M$}.

Following \cite{Anderson86,ABP,Papanikolas}, an \emph{Anderson
$t$-motive} $\eM$ is a left $\ok[t,\sigma]$-module which is free and
finitely generated as both a left $\ok[t]$-module and a left
$\ok[\sigma]$-module and which satisfies, for $n$ sufficiently
large, $(t-\theta)^n \eM \subseteq \sigma \eM$.  The functor
\[
  \eM \mapsto \ok(t) \otimes_{\ok[t]} \eM,
\]
from Anderson $t$-motives to pre-$t$-motives is fully faithful up to
isogeny (see \cite[Thm.~3.4.9]{Papanikolas}), and if $\ok(t)
\otimes_{\ok[t]} \eM$ is rigid analytically trivial, then for a
$\ok[t]$-basis $\bm$ of $\eM$ it is possible to pick a rigid
analytic trivialization $\Psi$ that lies in $\GL_r(\TT)$ (see
\cite[Prop.~3.4.7]{Papanikolas}).  By definition, the category $\cT$
of $t$-motives is the strictly full Tannakian subcategory generated
by Anderson $t$-motives in the category of rigid analytically
trivial pre-$t$-motives.

\subsection{Galois groups and difference equations}
Following \cite[\S 4--5]{Papanikolas} the Galois groups of
$t$-motives can be constructed explicitly using the theory of
Frobenius semi-linear difference equations.  Specifically, suppose
we are given a triple of fields $F \subseteq K \subseteq L$ together
with an automorphism $\sigma_*: L \to L$ that satisfy (1) $\sigma_*$
restricts to an automorphism of $K$ and is the identity on $F$; (2)
$F = K^{\sigma_*} = L^{\sigma_*}$; and (3) $L$ is a separable
extension of $K$.  Then given $\Phi \in \GL_r(K)$, $\Psi \in
\GL_r(L)$ satisfying $\sigma_* \Psi = \Phi\Psi$, we let $Z_\Psi$ be
the smallest closed subscheme of $\GL_r/K$ that contains $\Psi$ as
an $L$-rational point.  That is, if $K[X,1/\det X]$, $X = (X_{ij})$,
is the coordinate ring of $\GL_r/K$, then the defining ideal of
$Z_\Psi$ is the kernel of the $K$-algebra homomorphism
\[
  X_{ij} \mapsto \Psi_{ij} : K[X,1/\det X] \to L.
\]
Then if we let $\Gamma_\Psi$ be the smallest closed subscheme of
$\GL_r/F$ so that $\Gamma_\Psi(\oL) \supseteq
\Psi^{-1}Z_{\Psi}(\oL)$, the following properties hold.

\begin{theorem}[{Papanikolas \cite[Thm.~4.2.11,
Thm.~4.3.1]{Papanikolas}}] \label{T:DiffGalGrp} The scheme
$\Gamma_\Psi$ is a closed $F$-subgroup scheme of $\GL_r/F$, and the
closed $K$-subscheme $Z_\Psi$ of $\GL_r/K$ is stable under
right-multiplication by $K\times_F\Gamma_\Psi$ and is a $(K\times_F
\Gamma_\Psi)$-torsor, and in particular
\[
  \Gamma_\Psi(\oL) = \Psi^{-1}Z_{\Psi}(\oL) .
\]
In addition, if $K$ is algebraically closed in $K(\Psi) \subseteq
L$, then
\begin{enumerate}
\item[(a)] The $K$-scheme $Z_\Psi$ is smooth and
geometrically connected.
\item[(b)] The $F$-scheme $\Gamma_\Psi$ is smooth and
geometrically connected.
\item[(c)] The dimension of $\Gamma_{\Psi}$ over $F$ is equal to
the transcendence degree of $K(\Psi)$ over $K$.
\end{enumerate}
\end{theorem}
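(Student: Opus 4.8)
The plan is to run the standard torsor-theoretic construction of difference Galois theory, exploiting the \emph{homogeneity} of the equation $\sigma_{*}\Psi=\Phi\Psi$: if $\Psi'$ is any second solution in $\GL_{r}$ over a $\sigma_{*}$-extension, then $\sigma_{*}(\Psi^{-1}\Psi')=(\Phi\Psi)^{-1}(\Phi\Psi')=\Psi^{-1}\Psi'$, so two solutions differ by right multiplication by a $\sigma_{*}$-invariant matrix, and $\sigma_{*}$-invariant scalars sit inside $F$. Concretely, set $R:=K[\Psi]=K[X,1/\det X]/\mathfrak{p}$, where $\mathfrak{p}$ is the defining ideal of $Z_{\Psi}$; then $R$ is an integral domain and $Z_{\Psi}=\operatorname{Spec}R$. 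Extending $\sigma_{*}$ to $K[X,1/\det X]$ by $\sigma_{*}(X)=\Phi X$ makes the quotient map $\sigma_{*}$-equivariant, so $R$ is a $\sigma_{*}$-stable $K$-subalgebra of $L$ with $R^{\sigma_{*}}=F$ (since $F\subseteq K\subseteq R\subseteq L$ and $L^{\sigma_{*}}=F$).

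I would then produce $\Gamma_{\Psi}$ and the torsor structure together, via tensor products \emph{over $K$}. Form $R\otimes_{K}R$ with its diagonal endomorphism (still written $\sigma_{*}$) and let $\Psi_{1},\Psi_{2}\in\GL_{r}(R\otimes_{K}R)$ be the images of the tautological matrix of $\GL_{r}(R)$ under the two coprojections. Since the tensor product is over $K$ and $\Phi$ has entries in $K$, one has $\Phi\otimes1=1\otimes\Phi=:\Phi_{0}$, whence $\sigma_{*}\Psi_{i}=\Phi_{0}\Psi_{i}$ and $\Theta:=\Psi_{1}^{-1}\Psi_{2}$ has entries in $(R\otimes_{K}R)^{\sigma_{*}}$. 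The key input — and the sole place the separability of $L/K$ enters — is a \emph{descent lemma} for these invariants: separability forces $R\otimes_{K}R$ to be reduced (it embeds in the reduced ring $L\otimes_{K}L$), and one shows that after inverting suitable elements the multiplication map $R\otimes_{F}(R\otimes_{K}R)^{\sigma_{*}}\to R\otimes_{K}R$ (first factor times invariants) is an isomorphism. Letting $\Gamma$ be a finitely generated $F$-algebra having $(R\otimes_{K}R)^{\sigma_{*}}$ as a localization, the matrix $\Theta$ determines an $F$-algebra map $F[X,1/\det X]\to\Gamma$, $X\mapsto\Theta$, whose scheme-theoretic image is a closed $F$-subscheme $\Gamma_{\Psi}\subseteq\GL_{r}/F$; evaluating $\Theta$ at a pair of points $(z_{1},z_{2})$ of $Z_{\Psi}$ returns $z_{1}^{-1}z_{2}$, which (taking $z_{1}=\Psi$ and letting $z_{2}$ vary) identifies this $\Gamma_{\Psi}$ with the one in the statement. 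The cocycle relation $\Theta_{12}\Theta_{23}=\Theta_{13}$ on $R\otimes_{K}R\otimes_{K}R$ together with $\Theta|_{\mathrm{diag}}=\Id$ endow $\Gamma$ with the structure of a Hopf quotient of $F[X,1/\det X]$, i.e.\ make $\Gamma_{\Psi}$ a closed subgroup scheme of $\GL_{r}/F$; and the isomorphism above, read as $Z_{\Psi}\times_{F}\Gamma_{\Psi}\iso Z_{\Psi}\times_{K}Z_{\Psi}$, $(z,\gamma)\mapsto(z,z\gamma)$, is precisely the assertion that $Z_{\Psi}$ is stable under right translation by $K\times_{F}\Gamma_{\Psi}$ and is a torsor under it.

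With the torsor structure and $\Psi\in Z_{\Psi}(L)$ in hand, the identity $\Gamma_{\Psi}(\oL)=\Psi^{-1}Z_{\Psi}(\oL)$ follows from transitivity, as $Z_{\Psi}(\oL)=\Psi\cdot\Gamma_{\Psi}(\oL)$. For the refined statements, suppose $K$ is algebraically closed in $K(\Psi)=\operatorname{Frac}(R)$. Since $K(\Psi)\subseteq L$ and $L/K$ is separable, $K(\Psi)/K$ is separable, so $R$ is geometrically reduced over $K$; together with $K$ being algebraically closed in $\operatorname{Frac}(R)$, this makes the domain $R$ geometrically integral, i.e.\ $Z_{\Psi}$ geometrically integral over $K$. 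The point $\Psi$ trivializes the torsor over $L$, giving $Z_{\Psi}\times_{K}L\cong\Gamma_{\Psi}\times_{F}L$ by $z\mapsto\Psi^{-1}z$; base-changing to $\oL$ and using geometric integrality of $Z_{\Psi}$ shows $\Gamma_{\Psi}\times_{F}\oL\cong Z_{\Psi}\times_{K}\oL$ is integral, so $\Gamma_{\Psi}$ is geometrically integral over $F$, in particular geometrically reduced, and hence — being a group scheme of finite type over a field — smooth, which is (b). Then $Z_{\Psi}$, a torsor under the smooth group $K\times_{F}\Gamma_{\Psi}$, becomes isomorphic to the smooth $L$-scheme $\Gamma_{\Psi}\times_{F}L$ after the faithfully flat base change $K\to L$, so $Z_{\Psi}$ is smooth, which is (a). Finally, $\dim\Gamma_{\Psi}=\dim(\Gamma_{\Psi}\times_{F}L)=\dim(Z_{\Psi}\times_{K}L)=\dim Z_{\Psi}=\dim R=\trdeg_{K}K(\Psi)$, the last equality because $R$ is a finitely generated domain over $K$; this is (c).

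The main obstacle is the descent lemma of the second paragraph — identifying $(R\otimes_{K}R)^{\sigma_{*}}$ with a localization of a finitely generated $F$-algebra and proving that the multiplication map is an isomorphism. This is exactly where separability of $L/K$ is indispensable (to keep $R\otimes_{K}R$ reduced and to make the invariant-theoretic descent valid), and it is the single step that simultaneously delivers the subgroup-scheme and torsor structure; once it is available, the remaining points are formal bookkeeping. A secondary technical nuisance is that in the general situation — before assuming $K$ algebraically closed in $K(\Psi)$ — the ring $R\otimes_{K}R$ need not be a domain, so throughout the first part one must work with its total ring of fractions and the full set of its minimal primes in place of a single function field.
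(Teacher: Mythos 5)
This statement is not proved in the paper at all: it is quoted verbatim from Papanikolas \cite[Thm.~4.2.11, Thm.~4.3.1]{Papanikolas}, so there is no in-paper argument to compare with, and your sketch follows essentially the same torsor-theoretic route as that reference --- the coordinate ring $R = K[\Psi,1/\det\Psi]$ of $Z_\Psi$, the $\sigma_*$-invariant matrix $\Theta = \Psi_1^{-1}\Psi_2$ over $R\otimes_K R$, the cocycle identity giving the Hopf (subgroup) structure on $\Gamma_\Psi$, the trivialization of the torsor by the $L$-point $\Psi$, and the deduction of (a)--(c) from geometric integrality of $Z_\Psi$ when $K$ is algebraically closed in $K(\Psi)$ and $L/K$ is separable. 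Do note, however, that the two steps you explicitly defer --- the descent lemma identifying $R\otimes_F (R\otimes_K R)^{\sigma_*}$ with $R\otimes_K R$ up to localization, and the unconditional torsor/subgroup statement in the first part, where $R\otimes_K R$ need not be a domain --- are precisely the technical content of the cited theorems, so as it stands your text is an outline of Papanikolas's proof with its core input taken as a black box rather than an independent complete argument.
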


Finally we return to $t$-motives.  Suppose that $M \in \cR$, that
$\Phi \in \GL_r(\ok(t))$ represents multiplication by $\sigma$ on
$M$, and that $\Psi \in \GL_r(\LL)$ satisfies $\Psi^{(-1)} =
\Phi\Psi$.  Then using the triple of fields $(F,K,L) =
(\FF_q(t),\ok(t),\LL)$, we can construct the group $\Gamma_\Psi$,
which is a subgroup of $\GL_r/\FF_q(t)$.  Given any object $N$ of
$\cT_M$, one can construct a canonical representation of
$\Gamma_\Psi$ on $N^{\rB}$ (see \cite[\S 4.5]{Papanikolas}).  This
permits the identification of $\Gamma_M$ and $\Gamma_\Psi$.

\begin{theorem}[{Papanikolas
\cite[Thm.~4.5.10]{Papanikolas}}]\label{T:TannFiber} Given a
pre-$t$-motive $M$ together with $\Phi$, $\Psi$ as in the paragraph
above, the evident functor
\[
  N \mapsto N^{\rB} : \cR_M \to \Rep{\Gamma_{\Psi}}{\FF_q(t)}
\]
is an equivalence of Tannakian categories.  Moreover, $\Gamma_M
\cong \Gamma_\Psi$ over $\FF_q(t)$.
\end{theorem}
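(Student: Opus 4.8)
The plan is to realize both $\Gamma_M$ and $\Gamma_\Psi$ as the Tannakian fundamental group attached to one and the same fiber functor. By Tannakian duality $\Gamma_M$ is the affine $\ek$-group scheme $\Aut^{\otimes}(\omega)$ of tensor automorphisms of the fiber functor $\omega\colon\cR_M\to\mathrm{Vec}_{\ek}$, $N\mapsto N^{\rB}$, and $\omega$ lifts to the equivalence $\cR_M\approx\Rep{\Gamma_M}{\ek}$. On the other hand $\Gamma_\Psi$ is a closed subgroup scheme of $\GL_r/\ek$, and fixing the $\ek$-basis $\Psi^{-1}\bm$ of $M^{\rB}$ identifies $\GL_r/\ek$ with $\GL(M^{\rB})$, so that $M^{\rB}$ is the standard representation of $\Gamma_\Psi$. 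Because every object of $\cR_M$ is a subquotient of a finite direct sum of objects $M^{\otimes a}\otimes(M^{\vee})^{\otimes b}$ and $\omega$ is an exact faithful tensor functor, one extends the $\Gamma_\Psi$-action from $M^{\rB}$ along these tensor constructions to get the ``evident'' exact tensor functor $F\colon\cR_M\to\Rep{\Gamma_\Psi}{\ek}$, $N\mapsto N^{\rB}$, with $\mathrm{forget}\circ F=\omega$. By the Tannakian formalism \cite[\S II]{DMOS}, $F$ induces a homomorphism $\phi\colon\Gamma_\Psi\to\Gamma_M$ of affine $\ek$-group schemes and identifies $F$ with restriction of representations along $\phi$; the theorem then amounts to showing that $\phi$ is an isomorphism, for which I would verify the two criteria of \cite[Prop.~2.21]{DMOS}.

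First, $\phi$ is a closed immersion, because every object of $\Rep{\Gamma_\Psi}{\ek}$ is a subquotient of an object of the form $F(N)$: since $\Gamma_\Psi$ is a closed subgroup scheme of $\GL(M^{\rB})$, every $\Gamma_\Psi$-representation is a subquotient of a direct sum of tensor powers of $M^{\rB}$ and its dual, and each such tensor construction is $F$ of the corresponding tensor construction on $M$, as $F$ is a tensor functor with $F(M)=M^{\rB}$.

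Second, and this is the substantive point, $\phi$ is faithfully flat, i.e.\ $F$ is fully faithful with essential image stable under subobjects in $\Rep{\Gamma_\Psi}{\ek}$; here the difference-equation description of $\Gamma_\Psi$ enters. For $N_1,N_2\in\cR_M$ choose $\ok(t)$-bases $\bn_i$ with $\sigma\bn_i=\Phi_i\bn_i$ and trivializations $\Psi_i$ over $\LL$ with $\Psi_i^{(-1)}=\Phi_i\Psi_i$ inherited functorially from $\Phi,\Psi$, so that the entries of $\Psi_i^{-1}\bn_i$ form an $\ek$-basis of $N_i^{\rB}$. A morphism $g\colon N_1\to N_2$ in $\cR$ has matrix $B$ over $\ok(t)$ with $B^{(-1)}\Phi_2=\Phi_1 B$, and a short computation then yields $(\Psi_1^{-1}B\Psi_2)^{(-1)}=\Psi_1^{-1}B\Psi_2$, so this matrix has entries in $\ek$ and is exactly the matrix of $g^{\rB}$, which is therefore $\ek$-linear. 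Conversely any $\ek$-linear $h\colon N_1^{\rB}\to N_2^{\rB}$ with matrix $H$ over $\ek$ extends $\LL$-linearly, and under the identifications $\LL\otimes_{\ek}N_i^{\rB}\cong\LL\otimes_{\ok(t)}N_i$ furnished by the $\Psi_i$ it becomes the map with matrix $C:=\Psi_1 H\Psi_2^{-1}$; since $H^{(-1)}=H$ one gets $C^{(-1)}\Phi_2=\Phi_1 C$, so this map automatically commutes with $\sigma$. Thus full faithfulness reduces to two interlocking claims: that $g^{\rB}$ is $\Gamma_\Psi$-equivariant, and that if $h$ is $\Gamma_\Psi$-equivariant then $C$ has entries in $\ok(t)$ (whence $h=g^{\rB}$ for a unique $g$). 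Both follow from the torsor statement of Theorem~\ref{T:DiffGalGrp}: $Z_\Psi$ is a $\bigl(\ok(t)\times_{\ek}\Gamma_\Psi\bigr)$-torsor with $\Gamma_\Psi(\oL)=\Psi^{-1}Z_\Psi(\oL)$; transporting this torsor along the tensor constructions attached to $N_1$ and $N_2$ identifies, over $\ok(t)$, the $\Gamma_\Psi$-invariant sections with $\ok(t)$-rational data, $\Gamma_\Psi$-invariance of $C$ is precisely $\Gamma_\Psi$-equivariance of $h$, and the identity $\bigl(\ok(t)\otimes_{\ek}\ek[\Gamma_\Psi]\bigr)^{\Gamma_\Psi}=\ok(t)$ then forces $C$ to be $\ok(t)$-rational. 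Stability of the essential image under subobjects follows by the same descent, applied to the subobject of $N$ in $\cR$ cut out by a $\Gamma_\Psi$-stable subspace of $N^{\rB}$.

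Combining the two steps, $\phi\colon\Gamma_\Psi\to\Gamma_M$ is an isomorphism of $\ek$-group schemes, $F$ is a tensor equivalence lifting $\omega$, and $\Gamma_M\cong\Gamma_\Psi$ over $\ek$, as claimed. I expect the main obstacle to be the descent in the third paragraph — translating the Frobenius difference equation $B^{(-1)}\Phi_2=\Phi_1 B$ into $\Gamma_\Psi$-equivariance and, conversely, forcing an a priori only $\LL$-rational $\Gamma_\Psi$-invariant map down to $\ok(t)$. This is the technical heart of the argument and rests essentially on the torsor and connectedness properties of $Z_\Psi$ recorded in Theorem~\ref{T:DiffGalGrp}, in particular on the separability of $\LL$ over $\ok(t)$.
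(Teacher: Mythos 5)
This statement is not proved in the paper at all—it is imported verbatim from \cite[Thm.~4.5.10]{Papanikolas}—and your outline reproduces essentially the argument of that source: construct the canonical $\Gamma_\Psi$-action on each $N^{\rB}$, regard $N \mapsto N^{\rB}$ as a tensor functor over the common fiber functor so that it induces $\phi\colon\Gamma_\Psi\to\Gamma_M$, and verify the two criteria of \cite[Prop.~2.21]{DMOS}, with the torsor property of $Z_\Psi$ from Theorem~\ref{T:DiffGalGrp} supplying the descent from $\LL$ to $\ok(t)$ that gives full faithfulness and closure of the essential image under subobjects. The steps you leave compressed—the functorial construction of trivializations for subquotients of tensor constructions on $M$, the equivariance of $g^{\rB}$, and the subobject closure—are exactly the content of \cite[\S 4.5]{Papanikolas}, so your proposal is sound and follows the same route as the original proof.
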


Furthermore, if $M = \ok(t) \otimes_{\ok[t]} \eM$ for an Anderson
$t$-motive $\eM$, then by \cite[Prop.~3.3.9]{Papanikolas} we can
choose $\Psi$ to be in $\GL_r(\TT)$ so that the entries of $\Psi$
converge on all of $\CC_\infty$ \cite[Prop.~3.1.3]{ABP}.  The main
theorem of \cite{Papanikolas} is then the following.

\begin{theorem}[{Papanikolas \cite[Thm.~1.1.7]{Papanikolas}}]
\label{T:TrDegGalGrp} Let $M$ be a $t$-motive, and let $\Gamma_M$ be
its Galois group. Suppose that $\Phi \in \GL_r(\ok(t)) \cap
\Mat_r(\ok[t])$ represents multiplication by $\sigma$ on $M$ and
that $\det\Phi = c(t-\theta)^s$, $c \in \ok^\times$.  If $\Psi \in
\GL_r(\TT)$ is a rigid analytic trivialization for $\Phi$, then
$\trdeg_{\ok} \ok(\Psi(\theta)) = \dim \Gamma_M$.
\end{theorem}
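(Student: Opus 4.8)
The plan is to deduce Theorem~\ref{T:TrDegGalGrp} by combining the explicit difference-equation description of the Galois group (Theorem~\ref{T:DiffGalGrp}) with the linear independence criterion underlying \cite{ABP,Papanikolas}. First I would set $(F,K,L)=(\FF_q(t),\ok(t),\LL)$ with $\sigma_*$ given by the $(-1)$-fold twist; one checks this triple satisfies the three hypotheses (twisting fixes $\FF_q(t)$, restricts to $\ok(t)$, and $\LL/\ok(t)$ is separable since $\LL$ has characteristic $p$ but the twist is a $q$-power Frobenius-type map and separability of $\LL$ over $\ok(t)$ follows as in \cite[\S3.3]{Papanikolas}). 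Then $\Gamma_\Psi$ is defined, and by Theorem~\ref{T:TannFiber} it is identified with $\Gamma_M$. So it suffices to show $\dim\Gamma_\Psi = \trdeg_{\ok}\ok(\Psi(\theta))$.

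The key step is to invoke part~(c) of Theorem~\ref{T:DiffGalGrp}, which gives $\dim\Gamma_\Psi = \trdeg_K K(\Psi)$, provided $K=\ok(t)$ is algebraically closed in $K(\Psi)\subseteq\LL$. This last condition holds here: since $\det\Phi = c(t-\theta)^s$ with $c\in\ok^\times$, the entries of $\Psi$ (chosen in $\GL_r(\TT)$, converging on all of $\CC_\infty$ by \cite[Prop.~3.1.3]{ABP}) generate a field $\ok(t)(\Psi)$ inside $\TT$, and one shows $\ok(t)$ is algebraically closed in it — this is essentially \cite[Thm.~4.3.1]{Papanikolas} combined with the fact that $\ok$ is algebraically closed and $t$ is transcendental over $\ok$; the relevant ideal-theoretic argument is already packaged in the cited results. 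Thus $\dim\Gamma_M = \trdeg_{\ok(t)}\ok(t)(\Psi)$.

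The remaining and genuinely substantive step is to pass from $\trdeg_{\ok(t)}\ok(t)(\Psi)$ to $\trdeg_{\ok}\ok(\Psi(\theta))$, i.e.\ to show that specializing $t\mapsto\theta$ does not drop the transcendence degree. This is exactly where the ABP linear independence criterion enters: \cite[Thm.~3.1.1]{ABP} (the $\ABP$ criterion) says that if the entries of $\Psi$ converge on $\CC_\infty$ and $\det\Psi$ does not vanish at $t=\theta$ (guaranteed by $\det\Phi = c(t-\theta)^s$ having its only zero away from the unit where needed, and by the functional equation $\Psi^{(-1)}=\Phi\Psi$ forcing the right behavior at $\theta$), then any $\ok$-linear relation among the specialized values $\Psi(\theta)_{ij}$ lifts to an $\ok[t]$-linear (hence $\ok(t)$-linear) relation among the $\Psi_{ij}$. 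Phrased contrapositively and upgraded to algebraic relations via a standard argument (applying the linear statement to monomials in the entries, as in \cite[\S5]{Papanikolas}), this yields $\trdeg_{\ok}\ok(\Psi(\theta)) \ge \trdeg_{\ok(t)}\ok(t)(\Psi)$; the reverse inequality is automatic since specialization can only decrease transcendence degree. Combining the two gives equality, hence $\trdeg_{\ok}\ok(\Psi(\theta)) = \dim\Gamma_M$.

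I expect the main obstacle to be the careful justification that the hypotheses of the ABP criterion are met — specifically controlling the behavior of $\Psi$ and $\det\Psi$ at $t=\theta$ using only $\det\Phi = c(t-\theta)^s$ and the functional equation — and the bookkeeping needed to promote the linear independence statement to an algebraic independence statement (one must work with the pre-$t$-motive generated by $M$ and all its tensor constructions so that monomials in the $\Psi_{ij}$ again arise as entries of a rigid analytic trivialization of some object in $\cT_M$). Both of these are handled in \cite{ABP,Papanikolas}, so the proof here is mostly a matter of assembling Theorems~\ref{T:DiffGalGrp} and~\ref{T:TannFiber} with the ABP criterion and verifying the algebraic-closedness hypothesis in~(c).
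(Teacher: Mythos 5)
The first thing to note is that the paper does not prove Theorem~\ref{T:TrDegGalGrp} at all: it is imported verbatim from \cite[Thm.~1.1.7]{Papanikolas}, so the only proof to compare against is the one in that reference. Your outline does track that proof's architecture: identify $\Gamma_M$ with $\Gamma_\Psi$ via Theorem~\ref{T:TannFiber}, equate $\dim\Gamma_\Psi$ with $\trdeg_{\ok(t)}\ok(t)(\Psi)$ via the difference-Galois machinery of Theorem~\ref{T:DiffGalGrp}, and then use the linear independence criterion of \cite[Thm.~3.1.1]{ABP}, applied to monomials in the entries of $\Psi$ (i.e.\ to tensor constructions on $(1)\oplus\Phi$, whose determinants are again of the form $c(t-\theta)^s$ and whose trivializations have entire entries by \cite[Prop.~3.1.3]{ABP}), to see that specialization at $t=\theta$ does not lower the transcendence degree. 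That is indeed how the cited result is established.

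There is, however, a genuine gap at the point where you verify the hypothesis of Theorem~\ref{T:DiffGalGrp}(c), namely that $\ok(t)$ is algebraically closed in $\ok(t)(\Psi)$. Your justification is circular: \cite[Thm.~4.3.1]{Papanikolas} is precisely (part of) Theorem~\ref{T:DiffGalGrp}, which \emph{assumes} this hypothesis rather than proving it; and the hypothesis is not a formal consequence of $\ok$ being algebraically closed and $t$ transcendental, since $\LL$ contains many elements algebraic over $\ok(t)$ that do not lie in $\ok(t)$ (for $q$ odd, a square root of $1-t/\theta$ has coefficients in the prime field divided by powers of $\theta$ and hence lies in $\TT$). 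Any verification must genuinely use the functional equation $\Psi^{(-1)}=\Phi\Psi$ together with $\det\Phi=c(t-\theta)^s$, and in \cite{Papanikolas} this is a separate substantive step of the argument, not something already packaged in the results you quote. If all you need is the dimension count, you can sidestep the hypothesis entirely using the unconditional part of Theorem~\ref{T:DiffGalGrp}: $Z_\Psi$ is integral (its defining ideal is the kernel of a map into the field $\LL$) with function field $\ok(t)(\Psi)$, and the fact that $Z_\Psi$ is a torsor under $\ok(t)\times_{\FF_q(t)}\Gamma_\Psi$ forces $\dim\Gamma_\Psi=\dim Z_\Psi=\trdeg_{\ok(t)}\ok(t)(\Psi)$ without any smoothness or regularity input. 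Finally, a minor point: your parenthetical claim that $\det\Psi$ does not vanish at $t=\theta$ is neither justified by what you cite nor needed; the criterion of \cite{ABP} requires only $\Phi\in\Mat_r(\ok[t])$ with $\det\Phi=c(t-\theta)^s$ and an entire solution of the difference equation, so that remark should simply be dropped.
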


\section{Algebraic independence of periods and
quasi-periods}\label{sec:DrinModules} In order to maintain
consistency between the notation of Drinfeld modules and
$t$-motives, we switch slightly from \S\ref{Sec:Introduction} and
discuss ``Drinfeld $\eA$-modules'' instead of ``Drinfeld
$A$-modules.''

\subsection{Periods and quasi-periods}\label{sub:PeriodsQuasiP}
We review briefly information about Drinfeld modules.  For complete
treatments the reader is directed to \cite[Ch.~3--4]{Goss} and
\cite[Ch.~2]{Thakur}.  A \emph{Drinfeld $\eA$-module $\rho$} is
defined to be an $\FF_q$-algebra homomorphism $\rho: \eA \to
\CC_\infty[\tau]$ defined so that ${\textnormal{Im}}(\rho)\nsubseteq
\CC_{\infty}$, and if $\rho_a = a_0 + a_1\tau + \dots + a_s \tau^s$,
for $a \in \eA$, then $a_0 = a(\theta)$.  Thus,
\begin{equation} \label{E:rhot}
  \rho_t = \theta + \kappa_1\tau + \dots + \kappa_r \tau^r,
\end{equation}
and we say that $r = \deg_\tau \rho_t$ is the \emph{rank} of $\rho$.
If $\rho(\eA) \subseteq K[\tau]$ for a field $K \subseteq
\CC_\infty$, then we say that $\rho$ is \emph{defined over $K$}.

The Drinfeld module $\rho$ provides $\CC_\infty$ with the structure
of an $\eA$-module via
\[
  a \cdot x = \rho_a(x), \quad \forall a\in \eA, x \in
  \CC_\infty,
\]
and we let $(\CC_\infty,\rho)$ denote $\CC_\infty$ together with
this $\eA$-module structure.  A \emph{morphism} of Drinfeld modules
$\rho \to \rho'$ is a twisted polynomial $b \in \CC_\infty[\tau]$
such that $b \rho_a = \rho'_a b$, for all $a \in \eA$, and in this
way $b$ induces an $\eA$-module homomorphism $b:(\CC_\infty,\rho)
\to (\CC_\infty,\rho')$. We call $b$ an isomorphism of $\rho$ if
$b\in \CC_{\infty}^{\times}$, and we say $b$ is defined over $\ok$
if $b \in \ok[\tau]$. The $\eA$-algebra of all endomorphisms of a
Drinfeld module $\rho$ is denoted $\End(\rho)$.

There is a unique $\FF_q$-linear power series with coefficients in
$\CC_\infty$,
\[
  \exp_\rho(z) = z + \sum_{i=1}^\infty \alpha_i z^{q^i},
\]
called the \emph{exponential function of $\rho$}, which is entire,
is surjective on $\CC_\infty$, and satisfies
$\exp_{\rho}(a(\theta) z) = \rho_a(\exp_{\rho}(z))$, for $a \in
\eA$, $z \in
  \CC_\infty$.  The kernel $\Lambda_\rho \subseteq \CC_\infty$ of
$\exp_\rho$ is a discrete and finitely generated $A$-submodule of
$\CC_\infty$ of rank~$r$; elements in $\Lambda_{\rho}$ are called
{\em{periods}} of $\rho$. Thus we have an isomorphism of
$\eA$-modules $\CC_\infty/\Lambda_\rho \cong (\CC_\infty,\rho)$,
where on the left-hand side $t$ acts by multiplication by $\theta$.
Furthermore, the map
\[
  \iota = (c_0 + \dots + c_m\tau^m \mapsto  c_0) : \End(\rho)
\to \{ c \in
  \CC_\infty \mid c \Lambda_\rho \subseteq \Lambda_\rho \},
\]
is an isomorphism. Throughout this paper, we identify $\End(\rho)$
with the image of $\iota$.

In analogy with the de Rham cohomology for elliptic curves,
Anderson, Deligne, Gekeler, and Yu developed a de Rham theory for
Drinfeld modules, which characterizes isomorphism classes of
extensions of Drinfeld modules by $\Ga$ (see
\cite{BP02,Gekeler89,Thakur,Yu90}).  Continuing with our choice of
Drinfeld $\eA$-module $\rho$ of rank $r$, an $\FF_q$-linear map
$\delta : \eA \to \CC_\infty[\tau]\tau$ is called a
\emph{biderivation} if $\delta_{ab} = a(\theta)\delta_b +
\delta_a\rho_b$ for all $a$, $b \in \eA$.  The set of all
biderivations $D(\rho)$ is a $\CC_\infty$-vector space.   A
biderivation $\delta$ is said to be \emph{inner} if there exists $m
\in \CC_\infty[\tau]$ so that $\delta_a = a(\theta)m - m \rho_a$ for
all $a \in \eA$, in which case we denote this biderivation by
$\delta^{(m)}$. As in \cite{BP02,Gekeler89,Yu90}, we have
\[
  D_{\mathrm{si}}(\rho) = \{ \delta^{(m)} \mid m \in
  \CC_\infty[\tau]\tau\}\ \textnormal{(strictly inner),}
  \quad H_{\DR}(\rho) = D(\rho)/D_{\mathrm{si}}(\rho)
  \ \textnormal{(de Rham),}
\]
and $H_{\DR}(\rho)$ is called the \emph{de Rham group of $\rho$}.

Given $\delta \in D(\rho)$, there is a unique power series
$F_\delta(z) = \sum_{i=1}^\infty c_i z^{q^i} \in
\power{\CC_\infty}{z}$ so that
\begin{equation}\label{E:DiffFdelta}
  F_\delta(a(\theta) z) - a(\theta)F_\delta(z) = \delta_a
  (\exp_\rho(z)), \quad \forall a \in \eA,
\end{equation}
called the \emph{quasi-periodic function} associated to $\delta$. It
is an entire function on $\CC_\infty$ and satisfies $F_\delta(z+
\omega) = F_\delta(z) + F_\delta(\omega)$, for all $\omega \in
\Lambda_\rho$.  The values $F_\delta(\omega)$ for $\omega \in
\Lambda_\rho$ are called \emph{quasi-periods of $\rho$}.  Since the
map $F_\delta|_{\Lambda_\rho}$ is $A$-linear, there is a
well-defined $\CC_\infty$-linear map,
\begin{equation} \label{E:deRham}
  \delta \mapsto (\omega \mapsto F_{\delta}(\omega)) :
  H_{\DR}(\rho) \to \Hom_A(\Lambda_{\rho},\CC_\infty),
\end{equation}
and this map is an isomorphism (see \cite{Gekeler89}).

Note that as $\eA=\FF_{q}[t]$, every biderivation is uniquely
determined by the image of~$t$.  Hence the $\CC_{\infty}$-vector
space $H_{\DR}(\rho)$ has a conveniently chosen basis, represented
by biderivations $\delta_1, \delta_2, \dots, \delta_{r}$: $\delta_1$
is the inner biderivation $\delta^{(1)} : a \mapsto a(\theta) -
\rho_a$ that generates the space of all inner biderivations modulo
$D_{\mathrm{si}}(\rho)$, and the biderivations $\delta_2, \dots,
\delta_{r}$ are defined by $\delta_i:t \mapsto \tau^{i-1}$.  Now we
have $F_{\delta^{(1)}}(z) = z - \exp_\rho(z)$, and so
$F_{\delta^{(1)}}(\omega) = \omega$ for all $\omega \in
\Lambda_{\rho}$.  Thus if we put $F_{\tau^i}(z) :=
F_{\delta_{i+1}}(z)$, $i=1,\dots, r-1$, and let $\omega_1,\dots,
\omega_r$ be an $A$-basis of $\Lambda_\rho$, then we can set
\begin{equation} \label{E:Prho}
\rP_\rho := \left( F_{\delta_j}(\omega_i) \right) = \begin{pmatrix}
\omega_1 & F_\tau(\omega_1) & \cdots & F_{\tau^{r-1}}(\omega_1) \\
\omega_2 & F_\tau(\omega_2) & \cdots & F_{\tau^{r-1}}(\omega_2) \\
\vdots & \vdots & & \vdots \\
\omega_r & F_\tau(\omega_r) & \cdots & F_{\tau^{r-1}}(\omega_r)
\end{pmatrix},
\end{equation}
which we refer to as \emph{the period matrix of $\rho$}.  The first
column contains periods (of the first kind) of $\rho$, while the
remaining columns contain quasi-periods (periods of the second
kind). Basic properties of biderivations show that the field
$\ok(\rP_\rho)$ depends only on the isomorphism class of $\rho$ and not on the choice of
basis for $\Lambda_\rho$ or even the choice of basis for
$H_{\DR}(\rho)$ defined over $\ok$ (i.e., $\delta(\eA) \subseteq
\ok[\tau]\tau$).

\subsection{The $t$-adic Tate module and Anderson generating
functions} \label{S:tadic} For any $a \in \eA$ the torsion
$\eA$-module $\rho[a] := \{ x \in \CC_\infty \mid \rho_a(x) = 0 \}$
is isomorphic to $(\eA/(a))^{\oplus r}$.  Thus if we let $v$ be any
monic irreducible polynomial in $\eA$, we can define the \emph{Tate
module $T_v(\rho)$} to be
\[
  T_v(\rho) := \varprojlim \rho[v^m] \cong \eA_v^{\oplus r}.
\]

Now assume that $\rho$ is defined over $K \subseteq \ok$. Every
element of $\rho[v^m]$ is separable over $K$, and so the absolute
Galois group $\Gal(K^{\sep}/K)$ of the separable closure of $K$
inside $\ok$ acts on $T_v(\rho)$, thus defining a representation
\[
  \varphi_v : \Gal(K^{\sep}/K) \to \Aut(T_v(\rho)) \cong
  \GL_r(\eA_v).
\]

Because it is well-suited to our purposes we now specialize to the
case that $v=t$.  Fixing an $A$-basis $\omega_1, \dots, \omega_r$ of
$\Lambda_\rho$, we define
\[
  \xi_{i,m} := \exp_\rho \biggl( \frac{\omega_i}{\theta^{m+1}}
  \biggr) \in \rho[t^{m+1}]
\]
for $1 \leq i \leq r$ and $m \geq 0$.  In this way we define an
$\eA_t$-basis $x_1, \dots, x_r$ of $T_t(\rho)$ by taking $x_i :=
(\xi_{i,0}, \xi_{i,1}, \xi_{i,2}, \ldots)$.  Thus for $\epsilon \in
\Gal(K^{\sep}/K)$ we can define $g_{\epsilon} \in
\GL_r(\power{\FF_q}{t})$ so that
\[
  \varphi_t(\epsilon) \bx = g_{\epsilon}\bx,
\]
where $\bx = [ x_1, \dots, x_r]^{\tr}$.

For each $i$, $1 \leq i \leq r$, we define an \emph{Anderson
generating function},
\begin{equation} \label{E:AndGen}
  f_i(t) := \sum_{m=0}^\infty \xi_{i,m} t^m = \sum_{m=0}^\infty
  \exp_\rho \biggl( \frac{\omega_i}{\theta^{m+1}} \biggr) t^m \in
  \power{K^{\sep}}{t}.
\end{equation}
The group $\Gal(K^{\sep}/K)$ acts on $\power{K^{\sep}}{t}$ by acting
on each coefficient, and we extend this action entry-wise to
matrices with entries in $\power{K^{\sep}}{t}$. The following lemma
and corollary show that the Galois action on $f_i$ and its Frobenius
twists as elements of $\power{K^{\sep}}{t}$ is compatible with its
action on them as elements of $T_t(\rho)$.

\begin{lemma} \label{L:GalComp}
Let $\bff = [ f_1, \dots, f_r ]^{\tr}$.  For any $\epsilon \in
\Gal(K^{\sep}/K)$, we have $\epsilon(\bff) = g_\epsilon \bff$, where
$\epsilon(\bff) = [\epsilon(f_1), \dots, \epsilon(f_r)]^{\tr}$.
\end{lemma}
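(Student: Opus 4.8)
The plan is to unwind both sides of the asserted identity $\epsilon(\bff) = g_\epsilon \bff$ coefficient by coefficient in $t$ and match them against the definition of $g_\epsilon$ coming from the Tate module. First I would write out $f_i = \sum_{m \geq 0} \xi_{i,m} t^m$ with $\xi_{i,m} = \exp_\rho(\omega_i/\theta^{m+1}) \in \rho[t^{m+1}]$, and recall that by construction $x_i = (\xi_{i,0}, \xi_{i,1}, \ldots) \in T_t(\rho)$ and $\varphi_t(\epsilon)\bx = g_\epsilon \bx$ with $g_\epsilon \in \GL_r(\power{\FF_q}{t})$. The key point is that the Galois action on the power series $f_i$ is \emph{by definition} the action on each coefficient $\xi_{i,m}$, which is exactly the action of $\epsilon$ on the $m$-th component of the element $x_i$ of the Tate module.

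The crux is therefore purely bookkeeping: if $g_\epsilon = (g_{\epsilon,jk})$ with $g_{\epsilon,jk} = \sum_{\ell \geq 0} c^{(jk)}_\ell t^\ell \in \power{\FF_q}{t}$, then the relation $\varphi_t(\epsilon) x_j = \sum_k g_{\epsilon,jk} x_k$ in $T_t(\rho) \cong \eA_t^{\oplus r}$ must be interpreted through the $\eA_t$-module structure on $T_t(\rho) = \varprojlim \rho[t^m]$, where multiplication by $t$ is the transition-compatible shift induced by $\rho_t$ (more precisely, $t \cdot (\eta_0,\eta_1,\ldots) = (0, \eta_0, \eta_1, \ldots)$ after identifying $\rho[t^m]$ appropriately, since $\rho_t(\xi_{i,m}) = \xi_{i,m-1}$). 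Thus I would carefully check that the $\eA_t$-linear combination $\sum_k g_{\epsilon,jk}x_k$, read off componentwise, produces precisely the sequence whose $m$-th term is $\sum_k \sum_{\ell=0}^{m} c^{(jk)}_\ell \, \xi_{k,m-\ell}$, and that this same sum is the $m$-th coefficient of $\sum_k g_{\epsilon,jk} f_k$ viewed in $\power{K^{\sep}}{t}$ (a Cauchy product, legitimate since $g_{\epsilon,jk} \in \power{\FF_q}{t}$ has coefficients fixed by every automorphism). Meanwhile the $m$-th coefficient of $\epsilon(f_j)$ is $\epsilon(\xi_{j,m})$, which equals the $m$-th component of $\varphi_t(\epsilon)x_j$, hence equals $\sum_k\sum_{\ell=0}^m c^{(jk)}_\ell\,\xi_{k,m-\ell}$ by the Tate-module relation. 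Matching these gives $\epsilon(f_j) = \sum_k g_{\epsilon,jk} f_k$ for each $j$, i.e., $\epsilon(\bff) = g_\epsilon \bff$.

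I expect the main obstacle to be entirely notational rather than conceptual: one must be scrupulous about the two parallel $\eA_t$-module structures — the one on $T_t(\rho)$ given by the inverse limit of the $\rho[t^m]$ with $t$ acting via $\rho_t$, and the one on $\power{K^{\sep}}{t}$ given by literal multiplication by $t$ — and verify that the chosen $\eA_t$-basis $x_1,\ldots,x_r$ of $T_t(\rho)$ built from the $\xi_{i,m}$ is compatible with the generating-function formation $x_i \leftrightarrow f_i$ in the sense that multiplication by $t$ on one side corresponds to multiplication by $t$ on the other. Granting the identity $\rho_t(\xi_{i,m+1}) = \xi_{i,m}$ (immediate from $\exp_\rho(a(\theta)z) = \rho_a(\exp_\rho z)$ applied with $a = t$, together with $\rho_t(\xi_{i,0}) = 0$), this compatibility is exactly what makes the Cauchy-product computation align with the Tate-module computation, and the lemma follows.
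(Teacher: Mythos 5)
Your proposal is correct and follows essentially the same route as the paper's proof: both compare the $t^m$-coefficients of $\epsilon(f_j)$ and of $\sum_k g_{\epsilon,jk}f_k$, using that the $\eA_t$-action on $T_t(\rho)$ (i.e., $\ba\cdot\xi_{i,m}=a_m\xi_{i,0}+\cdots+a_0\xi_{i,m}$, coming from $\rho_t(\xi_{i,m+1})=\xi_{i,m}$) matches the Cauchy product of power series in $\power{K^{\sep}}{t}$. The only difference is cosmetic: you make the shift description of multiplication by $t$ explicit, while the paper encodes the same fact directly in its formula for $\ba\cdot\xi_{i,m}$.
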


\begin{proof}
Given $\ba = \sum_{\ell=0}^\infty a_\ell t^\ell \in \power{\FF_q}{t}
= \eA_t$, it is easy to see that for each $i$, $\ba \cdot x_i = (\ba
\cdot \xi_{i,0}, \ba \cdot \xi_{i,1}, \ba \cdot \xi_{i,2}, \ldots)$,
where for each $m \geq 0$,
\begin{equation} \label{E:alphaxi}
  \ba \cdot \xi_{i,m} = a_m \xi_{i,0} + a_{m-1} \xi_{i,1} + \dots
  + a_0 \xi_{i,m} \in \rho[t^{m+1}].
\end{equation}
Now fix any $1 \leq s \leq r$, and let $[h_{s,1}, \dots, h_{s,r}]
\in \Mat_{1 \times r}(\power{\FF_q}{t})$ be the $s$-th row of
$g_\epsilon$. Then
\[
\epsilon(x_s) = \varphi_t(\epsilon)(x_s) = \sum_{i=1}^r h_{s,i}
\cdot x_i.
\]
Thus if we write $h_{s,i} = \sum_{\ell=0}^\infty \gamma_{i,\ell}
t^\ell$ as an element of $\power{\FF_q}{t}$, we see from
\eqref{E:alphaxi} that the $(m+1)$-th entry of $\epsilon(x_s)$ is
\[
\epsilon(\xi_{s,m}) = \sum_{i=1}^r (\gamma_{i,m}\xi_{i,0} +
\gamma_{i,m-1}\xi_{i,1} + \dots + \gamma_{i,0}\xi_{i,m}).
\]
It follows that
\[
  \epsilon(f_s) = \sum_{m=0}^\infty \biggl( \sum_{i=1}^r
(\gamma_{i,m}\xi_{i,0} + \gamma_{i,m-1}\xi_{i,1} + \dots +
\gamma_{i,0}\xi_{i,m}) \biggr) t^m.
\]
By reversing the order of summation we see that $\epsilon(f_s)$ is
the same as $[h_{s,1},\dots,h_{s,r}] \bff$ via multiplication of
power series in $\power{K^{\sep}}{t}$.
\end{proof}

\begin{corollary} \label{C:GalUpsilon}
For $1 \leq i,j \leq r$ define $\Upsilon \in
\Mat_r(\power{K^{\sep}}{t})$ so that $\Upsilon_{ij} := f_i^{(j-1)}$.
Then for any $\epsilon \in \Gal(K^{\sep}/K)$,
\[
  \epsilon(\Upsilon^{(1)}) = g_\epsilon \Upsilon^{(1)}.
\]
\end{corollary}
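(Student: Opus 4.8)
The plan is to deduce this corollary directly from Lemma~\ref{L:GalComp} by observing that the desired identity is nothing more than the $r$ twisted copies of that lemma's identity, packaged into a single matrix equation. The key point to exploit is that the Galois action commutes with the Frobenius twisting operation $f \mapsto f^{(n)}$ on $\power{K^{\sep}}{t}$, since twisting raises coefficients to a power of $q$ and any element of $\Gal(K^{\sep}/K)$ is a ring automorphism fixing $\FF_q$; hence $\epsilon(f^{(n)}) = \epsilon(f)^{(n)}$ for all $n \in \ZZ$ and all $f \in \power{K^{\sep}}{t}$. Likewise the entries of $g_\epsilon$ lie in $\power{\FF_q}{t}$, so they are fixed by twisting: $g_\epsilon^{(n)} = g_\epsilon$.

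First I would apply the twisting operator $(\cdot)^{(j-1)}$ to the identity $\epsilon(\bff) = g_\epsilon \bff$ of Lemma~\ref{L:GalComp}, for each $j = 1, \dots, r$. Since twisting is a ring homomorphism on $\power{K^{\sep}}{t}$ it distributes over the matrix product, giving $\epsilon(\bff)^{(j-1)} = g_\epsilon^{(j-1)} \bff^{(j-1)} = g_\epsilon \bff^{(j-1)}$, using $g_\epsilon^{(j-1)} = g_\epsilon$. Combining this with $\epsilon(\bff)^{(j-1)} = \epsilon(\bff^{(j-1)})$ yields $\epsilon(\bff^{(j-1)}) = g_\epsilon \bff^{(j-1)}$ for each $j$. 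In coordinates this says $\epsilon(f_i^{(j-1)}) = \sum_\ell (g_\epsilon)_{i\ell} f_\ell^{(j-1)}$, i.e.\ $\epsilon(\Upsilon^{(j-1)\text{-twisted column}})$ behaves correctly. Then I would assemble these $r$ vector identities as the columns of a matrix: the $j$-th column of $\Upsilon$ is $[f_1^{(j-1)}, \dots, f_r^{(j-1)}]^{\tr} = \bff^{(j-1)}$, so $\epsilon(\Upsilon) = g_\epsilon \Upsilon$, and applying one further twist gives $\epsilon(\Upsilon^{(1)}) = g_\epsilon \Upsilon^{(1)}$ exactly as claimed (the extra twist by $1$ on both sides is harmless for the same reason $g_\epsilon^{(1)} = g_\epsilon$, and the statement with $\Upsilon^{(1)}$ is the form needed later because it will play the role of a $\tau$-semilinear object).

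This proof has essentially no obstacle: the only thing to be careful about is the interchange of the Galois action with twisting and the fact that $g_\epsilon$ has coefficients in $\FF_q$ so is twist-invariant, both of which are immediate from the definitions recalled in \S\ref{sec:t-motives}. If anything merits a sentence of care, it is spelling out that $\epsilon$ acts coefficientwise and twisting acts coefficientwise by $c \mapsto c^{q^n}$, so the two operations commute as maps on $\power{K^{\sep}}{t}$ because raising to the $q^n$-th power commutes with the field automorphism $\epsilon$ (as $\epsilon$ is a homomorphism). Everything else is bookkeeping of indices in the matrix product.
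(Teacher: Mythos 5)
Your proposal is correct and follows essentially the same route as the paper: both deduce the identity from Lemma~\ref{L:GalComp} by noting that the columns of $\Upsilon^{(1)}$ (or $\Upsilon$) are the twists $\bff^{(j)}$, that the coefficientwise Galois action commutes with twisting, and that $g_\epsilon$ is twist-invariant because its entries lie in $\power{\FF_q}{t}$. The only cosmetic difference is that you first establish $\epsilon(\Upsilon) = g_\epsilon \Upsilon$ and then apply one more twist, whereas the paper twists column-by-column directly to $\bff^{(j)}$; the substance is identical.
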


\begin{proof}
The $j$-th column of $\Upsilon^{(1)}$ is simply $\bff^{(j)}$, where
$\bff$ is defined in Lemma~\ref{L:GalComp}.  Since for each $i,j$,
we have $\epsilon(f_i^{(j)}) = \epsilon(f_i)^{(j)}$, it follows from
Lemma~\ref{L:GalComp} that $\epsilon(\bff^{(j)}) =
\epsilon(\bff)^{(j)} = (g_\epsilon \bff)^{(j)}$. Since $g_\epsilon
\in \GL_r(\power{\FF_q}{t})$, we have $(g_\epsilon \bff)^{(j)}
= g_\epsilon \bff^{(j)}$.
\end{proof}

\subsection{Anderson $t$-motives associated to Drinfeld
modules}\label{Sec:AndMotDrin} Let us continue with our choice of
rank $r$ Drinfeld $\eA$-module $\rho$, defined as in \eqref{E:rhot}
over $\ok$.  As mentioned in \S\ref{sub:PeriodsQuasiP},
$\ok(P_{\rho})$ is unique up to isomorphisms of $\rho$ and so we
assume that $\kappa_r = 1$. The proofs in this section are
essentially identical to those in \cite[\S 2.4]{CP}, and we omit
them for brevity.

We associate an Anderson $t$-motive $\eM_\rho$ to $\rho$ in the
following way.  We let $\eM_\rho$ be isomorphic to the direct sum of
$r$ copies of $\ok[t]$, and we represent multiplication by $\sigma$
on $\eM_\rho$ with respect to the standard basis $m_1, \dots, m_r$
of $\eM_\rho$ by
\begin{equation} \label{E:Phirho}
  \Phi_\rho := \begin{pmatrix}
0 & 1 & \cdots & 0 \\
\vdots & \vdots & \ddots & \vdots \\
0 & 0 & \cdots & 1 \\
(t-\theta) & -\kappa_1^{(-1)} & \cdots & -\kappa_{r-1}^{(-r+1)}
\end{pmatrix}.
\end{equation}
Using a similar proof to \cite[Lem.~2.4.1]{CP} we find that
$\eM_\rho$ defines an Anderson $t$-motive.  As a
$\ok[\sigma]$-module, $\eM_\rho$ has rank $1$, and in fact $\eM_\rho
= \ok[\sigma]m_1$.  Finally we let $M_\rho := \ok(t)
\otimes_{\ok[t]} \eM_\rho$ be the pre-$t$-motive associated to
$\eM_\rho$.

Now a morphism $b : \rho \to \rho'$ of Drinfeld modules induces a
morphism $\beta : \eM_\rho \to \eM_{\rho'}$ of Anderson $t$-motives:
if $b = \sum c_i \tau^i$, then letting $b^* = \sum c_i^{(-i)}
\sigma^i \in \ok[\sigma]$, it follows, using methods similar to
\cite[Lem.~2.4.2]{CP}, that $\beta$ is the $\ok[\sigma]$-linear map
such that $\beta(m_1) = b^* m_1'$.  Moreover, we have the following
crucial result due to Anderson (see \cite[Prop.~2.4.3]{CP}).

\begin{proposition}\label{P:FunctorMovDrin}
The functor $\rho \mapsto \eM_\rho$ from Drinfeld $\eA$-modules over
$\ok$ to the category of Anderson $t$-motives is fully faithful.
Moreover, for any Drinfeld module $\rho$ over $\ok$,
\[
  \End(\rho) \cong \End_{\ok[t,\sigma]}(\eM_\rho), \quad K_\rho \cong
  \End_{\cT}(M_\rho).
\]
\end{proposition}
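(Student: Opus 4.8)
The plan is to build the correspondence out of two pieces: first the functoriality of $\rho \mapsto \eM_\rho$ together with full faithfulness, and second the identification of the endomorphism algebras, with the isomorphism $K_\rho \cong \End_{\cT}(M_\rho)$ obtained from $\End(\rho) \cong \End_{\ok[t,\sigma]}(\eM_\rho)$ by inverting a suitable element (the image of a nonconstant $a \in \eA$). Full faithfulness of $\rho \mapsto \eM_\rho$ is already asserted as part of the statement, and we may also invoke the isogeny-fully-faithful functor from Anderson $t$-motives to pre-$t$-motives from \cite[Thm.~3.4.9]{Papanikolas}; so the heart of the matter is the endomorphism-algebra claim, which I would reduce to a direct computation using the explicit description $\eM_\rho = \ok[\sigma]m_1$.

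First I would establish $\End(\rho) \cong \End_{\ok[t,\sigma]}(\eM_\rho)$. The map is the one already indicated before the proposition: a morphism $b = \sum c_i \tau^i : \rho \to \rho$ of Drinfeld modules goes to the $\ok[\sigma]$-linear endomorphism $\beta$ of $\eM_\rho$ determined by $\beta(m_1) = b^* m_1$, where $b^* = \sum c_i^{(-i)}\sigma^i$. One checks that $b \mapsto b^*$ is a ring anti-homomorphism $\ok[\tau] \to \ok[\sigma]$ (because $\sigma^i f = f^{(-i)} \sigma^i$ exactly mirrors $\tau^i c = c^{(i)} \tau^i$ read backwards), and that $b\rho_a = \rho_a b$ translates into $\beta$ commuting with multiplication by $\sigma$ in the way required for a morphism of Anderson $t$-motives; functoriality here is exactly \cite[Lem.~2.4.2]{CP}. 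For injectivity, $\beta(m_1) = 0$ forces $b^* = 0$ hence $b = 0$. For surjectivity, the key point is that $\eM_\rho$ is free of rank $1$ over $\ok[\sigma]$ with generator $m_1$, so an arbitrary $\ok[\sigma]$-linear endomorphism $\beta$ is completely determined by $\beta(m_1) \in \ok[\sigma]m_1$, say $\beta(m_1) = g\, m_1$ with $g \in \ok[\sigma]$; writing $g = g^{**}$ for the corresponding element of $\ok[\tau]$, one verifies that compatibility of $\beta$ with $\Phi_\rho$ (equivalently, with multiplication by $\sigma$) is precisely the condition that the corresponding $b \in \ok[\tau]$ satisfy $b\rho_t = \rho_t b$, i.e.\ that $b$ is an endomorphism of $\rho$. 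This last verification — turning the $\sigma$-compatibility of $\beta$ into the Drinfeld-module intertwining relation — is the step where the explicit shape of $\Phi_\rho$ in \eqref{E:Phirho} is used, and it is the one place a genuine (if routine) calculation is unavoidable; it is the analogue of \cite[Prop.~2.4.3]{CP}, and I would simply cite that argument rather than reproduce it.

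Next I would pass from $\eM_\rho$ to $M_\rho = \ok(t)\otimes_{\ok[t]}\eM_\rho$. The functor $\eM \mapsto \ok(t)\otimes_{\ok[t]}\eM$ is fully faithful up to isogeny by \cite[Thm.~3.4.9]{Papanikolas}, so it induces an injection of endomorphism $\ok(t)$-algebras and, after tensoring, an isomorphism onto the "isogeny" endomorphisms. Concretely, $\End_{\cT}(M_\rho)$ is the set of $\ok(t)[\sigma,\sigma^{-1}]$-linear endomorphisms of $M_\rho$, and every such endomorphism, after clearing denominators in $t$, restricts to a morphism $\eM_\rho \to \eM_\rho$ of Anderson $t$-motives up to isogeny — that is, it comes from an element of $\End(\rho)$ divided by some nonzero $a \in \eA$. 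Thus $\End_{\cT}(M_\rho) = \ek \otimes_{\eA}\End_{\ok[t,\sigma]}(\eM_\rho) \cong \ek \otimes_{\eA}\End(\rho)$. Now $\End(\rho)$ is a finite, torsion-free (hence projective) $\eA=\FF_q[t]$-algebra which is an integral domain with fraction field $K_\rho$, so $\ek\otimes_\eA\End(\rho) = K_\rho$. This gives $\End_{\cT}(M_\rho) \cong K_\rho$, completing the proof.

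The main obstacle, to the extent there is one, is bookkeeping rather than conceptual: one must be careful that the anti-isomorphism $\ok[\tau] \to \ok[\sigma]$, $b \mapsto b^*$, correctly conjugates the Drinfeld intertwining relation $b\rho_t = \rho_t b$ into the $t$-motivic relation $\beta \circ (\text{mult.\ by }\sigma) = (\text{mult.\ by }\sigma)\circ\beta$ via the specific matrix $\Phi_\rho$, including the twisting exponents on the $\kappa_i$. Since the corresponding statements are carried out in \cite[\S2.4]{CP} for the polynomial-ring case with identical matrices $\Phi_\rho$, I would invoke those lemmas directly, as the excerpt already signals; the only new input is the localization step $\ek\otimes_\eA\End(\rho)=K_\rho$, which is immediate.
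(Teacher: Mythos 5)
Your proposal is correct and follows essentially the same route as the paper, which omits the proof entirely and refers to \cite[\S 2.4, Prop.~2.4.3]{CP} (due to Anderson) for the computational core — the rank-one freeness of $\eM_\rho$ over $\ok[\sigma]$, the anti-isomorphism $b \mapsto b^*$, and then passage to $\End_{\cT}(M_\rho) \cong \ek \otimes_{\eA} \End_{\ok[t,\sigma]}(\eM_\rho) \cong K_\rho$ via the isogeny-fully-faithful functor of \cite[Thm.~3.4.9]{Papanikolas}. The only nitpick is phrasing: once $\beta$ is defined by $\ok[\sigma]$-linearity from $\beta(m_1)$, the nontrivial condition to check is $\ok[t]$-linearity (equivalently $E^{(-1)}\Phi_\rho = \Phi_\rho E$ in the $\ok[t]$-basis picture), not compatibility with $\sigma$, but this does not affect the argument you defer to \cite{CP}.
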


\begin{corollary}\label{C:SimpleMrho}
For a Drinfeld $\eA$-module $\rho$ over $\ok$, $M_{\rho}$ is a
simple left $\ok(t)[\sigma,\sigma^{-1}]$-module.
\end{corollary}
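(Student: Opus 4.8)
The plan is to deduce simplicity of $M_\rho$ from the structure of $\eM_\rho$ as a $\ok[\sigma]$-module together with the classification of endomorphisms provided by Proposition~\ref{P:FunctorMovDrin}. First I would recall that $\eM_\rho = \ok[\sigma]m_1$ is free of rank $1$ over $\ok[\sigma]$, so any nonzero $\ok[t,\sigma]$-submodule $\eN \subseteq \eM_\rho$, being a nonzero $\ok[\sigma]$-submodule of a free rank-$1$ module over the (left) principal ideal domain $\ok[\sigma]$, has finite $\ok[\sigma]$-colength, hence has the same rank $r$ over $\ok[t]$ as $\eM_\rho$. Passing to $M_\rho = \ok(t)\otimes_{\ok[t]}\eM_\rho$, this shows that any nonzero $\ok(t)[\sigma,\sigma^{-1}]$-submodule of $M_\rho$ already has full $\ok(t)$-dimension $r$, so it equals $M_\rho$; thus $M_\rho$ is simple as soon as we know it is nonzero, which is clear.

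Actually, to make the colength argument clean I would phrase it via endomorphism rings instead, since that is the route the paper sets up. By Proposition~\ref{P:FunctorMovDrin}, $\End_\cT(M_\rho)\cong K_\rho$ is a \emph{division ring} (it is a field, the fraction field of $\End(\rho)$). A rigid analytically trivial pre-$t$-motive of the form $\ok(t)\otimes_{\ok[t]}\eM$ that is semisimple would, by Tannakian/Wedderburn considerations, have endomorphism ring a product of matrix rings over division rings; more concretely, if $M_\rho$ were not simple, pick a simple submodule $S\subsetneq M_\rho$. The key point is then that the isogeny category of Anderson $t$-motives is semisimple — or at least that $M_\rho$ is semisimple — so $M_\rho\cong S\oplus S'$ for a nonzero complement $S'$, forcing $\End_\cT(M_\rho)$ to contain the idempotent projecting onto $S$, contradicting that $K_\rho$ has no nontrivial idempotents.

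The cleanest self-contained argument, and the one I would actually write, avoids invoking semisimplicity of the whole category. Suppose $0\neq N\subsetneq M_\rho$ is a proper $\ok(t)[\sigma,\sigma^{-1}]$-submodule. Intersecting with the image of $\eM_\rho$ and saturating, we may produce a sub-Anderson-$t$-motive $\eN\subseteq\eM_\rho$ with $0<\operatorname{rank}_{\ok[t]}\eN<r$. But $\eN$ inherits a $\ok[\sigma]$-module structure, and since $\eM_\rho=\ok[\sigma]m_1$ is $\ok[\sigma]$-free of rank $1$, the submodule $\eN$ is $\ok[\sigma]$-torsion-free of $\ok[\sigma]$-rank at most $1$; if $\eN\neq 0$ its $\ok[\sigma]$-rank is exactly $1$, and a nonzero $\ok[\sigma]$-submodule of a rank-$1$ free $\ok[\sigma]$-module has finite index, hence full $\ok[t]$-rank $r$ — contradiction. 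Therefore no such $N$ exists and $M_\rho$ is simple.

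The main obstacle is the bookkeeping in passing between the pre-$t$-motive $M_\rho$ (a module over $\ok(t)[\sigma,\sigma^{-1}]$) and the Anderson $t$-motive $\eM_\rho$ (a module over $\ok[t,\sigma]$): one must check that a proper submodule of $M_\rho$ gives rise to a genuine sub-Anderson-$t$-motive of $\eM_\rho$ of intermediate rank, i.e.\ that saturation in $\ok[t]$ and the condition $(t-\theta)^n\eN\subseteq\sigma\eN$ are preserved, and that $\sigma^{-1}$-stability on the $\ok(t)$ side matches $\sigma$-module structure on the integral side. Once that dictionary is in place, the rank obstruction coming from $\eM_\rho$ being $\ok[\sigma]$-free of rank $1$ (together with $\ok[\sigma]$ being a principal left ideal domain) finishes the proof immediately.
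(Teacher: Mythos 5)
Your final (third-paragraph) argument is correct, but it proves the corollary by a genuinely different route than the paper. The paper argues categorically: every nonzero morphism of Drinfeld $\eA$-modules is surjective, so Drinfeld modules are simple objects; by the full faithfulness in Proposition~\ref{P:FunctorMovDrin} the Anderson $t$-motive $\eM_\rho$ is simple, and the passage from simplicity of $\eM_\rho$ to simplicity of $M_\rho$ as a left $\ok(t)[\sigma,\sigma^{-1}]$-module is delegated to \cite[Prop.~4.4.10]{ABP}. You instead work directly with the $\sigma$-structure: since $\eM_\rho=\ok[\sigma]m_1$ and the division algorithm holds in $\ok[\sigma]$ (the twist is an automorphism of the perfect field $\ok$), every nonzero $\ok[\sigma]$-submodule of $\eM_\rho$ has finite $\ok$-codimension, hence full $\ok[t]$-rank $r$; intersecting a putative nonzero proper submodule $N\subseteq M_\rho$ with $\eM_\rho$ produces a nonzero $\sigma$-stable $\ok[t]$-lattice of rank $\dim_{\ok(t)}N<r$, a contradiction. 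Note that the bookkeeping you flag is lighter than you suggest: you only need $N\cap\eM_\rho$ to be a nonzero $\sigma$-stable $\ok[t]$-submodule spanning $N$ over $\ok(t)$, and the Anderson condition $(t-\theta)^n\eN\subseteq\sigma\eN$ plays no role. What each approach buys: yours is self-contained and elementary, avoiding both the Drinfeld-module input and the citation to \cite{ABP} (in effect re-deriving the needed case of that result), while the paper's proof is shorter given the cited machinery and stays within the functorial dictionary it has just set up. One caution: your middle paragraph's idempotent/semisimplicity sketch should indeed be discarded, since semisimplicity of $M_\rho$ in $\cT$ is not available at this stage — in the paper the complete reducibility of $M_\rho^n$ used later is itself deduced from this corollary, so that route would be circular — but your final argument correctly avoids it.
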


\begin{proof}
Every non-zero morphism of Drinfeld $\eA$-modules is surjective, and
therefore, every object in the category of Drinfeld $\eA$-modules is
simple. By Proposition~\ref{P:FunctorMovDrin}, $\eM_\rho$ is a
simple Anderson $t$-motive, and the result follows easily from
\cite[Prop.~4.4.10]{ABP}.
\end{proof}

\subsection{Drinfeld modules and rigid analytic
trivializations}\label{Sec:DrinRigid}
Given a Drinfeld $\eA$-module $\rho$ as in the previous section, we demonstrate here how its
associated Anderson $t$-motive $\eM_\rho$ is rigid analytically
trivial.  The arguments follow methods of Pellarin~\cite[\S
4.2]{Pellarin}.

For $u \in \CC_\infty$, we consider the Anderson generating function
\begin{equation}\label{E:AndGenFn}
  f_u(t) := \sum_{m=0}^\infty \exp_\rho \biggl(
  \frac{u}{\theta^{m+1}} \biggr) t^m = \sum_{i=0}^\infty
  \frac{\alpha_i u^{q^i}}{\theta^{q^i} - t} \in \TT,
\end{equation}
where $\exp_\rho(z) = z + \sum_{i=1}^\infty \alpha_i z^{q^i}$.  The
function $f_u(t)$ is meromorphic on $\CC_\infty$ with simple poles
at $t=\theta$, $\theta^q, \ldots$ with residues $-u$, $-\alpha_1
u^q, \ldots$.  Since $\rho_t(\exp_\rho(u/\theta^{m+1})) =
\exp_\rho(u/\theta^m)$,
\begin{equation} \label{E:fuFnEq}
  \kappa_1 f_u^{(1)}(t) + \dots + \kappa_{r-1} f_u^{(r-1)}(t) +
  f_u^{(r)} = (t-\theta)f_u(t) + \exp_\rho(u).
\end{equation}
It follows upon specializing at $t=\theta$ that
\begin{equation} \label{E:fuFnEqSpec}
  \kappa_1 f_u^{(1)}(\theta) + \dots + \kappa_{r-1}
  f_u^{(r-1)}(\theta) + f_u^{(r)}(\theta) = -u + \exp_\rho(u).
\end{equation}

\begin{lemma} \label{L:fuLinInd}
If $u_1, \dots, u_n \in \CC_\infty$ are linearly independent over
$k$, then the functions $f_{u_1}(t), \dots, f_{u_n}(t)$ are linearly
independent over $\ek$.
\end{lemma}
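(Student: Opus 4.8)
The plan is to prove the contrapositive: suppose $f_{u_1}, \dots, f_{u_n}$ are linearly dependent over $\ek = \FF_q(t)$, and deduce that $u_1, \dots, u_n$ are linearly dependent over $k = \FF_q(\theta)$. So assume $\sum_{i=1}^n c_i(t) f_{u_i}(t) = 0$ with $c_i(t) \in \FF_q(t)$, not all zero; after clearing denominators we may take $c_i(t) \in \FF_q[t]$ with no common factor. The key structural input is the partial fraction description in \eqref{E:AndGenFn}: each $f_{u_i}(t) = \sum_{j=0}^\infty \frac{\alpha_j u_i^{q^j}}{\theta^{q^j} - t}$ is meromorphic on $\CC_\infty$ with at worst simple poles at $t = \theta^{q^j}$, $j \geq 0$, and residue $-\alpha_j u_i^{q^j}$ at $t = \theta^{q^j}$ (with the convention $\alpha_0 = 1$).

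The main step is a residue computation. Evaluate the residue of the identity $\sum_i c_i(t) f_{u_i}(t) = 0$ at the pole $t = \theta$. Since the $c_i(t)$ are polynomials, hence holomorphic at $t = \theta$, the residue at $t=\theta$ of the left-hand side is $-\bigl(\sum_i c_i(\theta) u_i\bigr)$, so we get $\sum_i c_i(\theta) u_i = 0$. If some $c_i(\theta) \neq 0$, this is already a nontrivial $k$-linear (indeed $\FF_q$-linear) relation among the $u_i$, and we are done. The remaining case is that $(t - \theta)$ divides every $c_i(t)$. I would like to rule this out, or rather to extract a relation anyway. To handle it, apply the twisting automorphism. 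Taking the $(-1)$-fold twist of the relation $\sum_i c_i(t) f_{u_i}(t) = 0$ and using that the $c_i(t)$ lie in $\FF_q[t]$ (so $c_i^{(-1)} = c_i$) together with the functional equation \eqref{E:fuFnEq}, one can relate $\sum_i c_i(t) f_{u_i}^{(1)}(t)$ back to lower twists; more directly, one can iterate: since $f_{u_i}$ has its pole at $t = \theta^q$ only through the single term $\frac{\alpha_1 u_i^q}{\theta^q - t} = \frac{(\alpha_1 u_i^q)}{\theta^q - t}$, and since $\alpha_1 u_i^q = (\alpha_1')^{?}\cdots$ — cleaner: note that $f_{u_i}$ and $f_{u_i^q}$ differ in a controlled way, or simply take residues at $t = \theta^q, \theta^{q^2}, \dots$ in turn.

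Concretely, the cleanest route: for each $\ell \geq 0$, take the residue of $\sum_i c_i(t) f_{u_i}(t) = 0$ at $t = \theta^{q^\ell}$, obtaining $\sum_i c_i(\theta^{q^\ell}) \alpha_\ell u_i^{q^\ell} = 0$, i.e. $\sum_i c_i(\theta^{q^\ell}) u_i^{q^\ell} = 0$ (dividing by $\alpha_\ell \neq 0$). Now raise the $\ell = 0$ relation that we \emph{want} to the $q^\ell$-th power to compare: $\bigl(\sum_i c_i(\theta) u_i\bigr)^{q^\ell} = \sum_i c_i(\theta)^{q^\ell} u_i^{q^\ell}$. Since $c_i \in \FF_q[t]$, we have $c_i(\theta)^{q^\ell} = c_i(\theta^{q^\ell})$ (as $c_i$ has $\FF_q$-coefficients). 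Hence the residue relation at $t = \theta^{q^\ell}$ is exactly the $q^\ell$-th power of the putative relation $\sum_i c_i(\theta) u_i = 0$. So if $\sum_i c_i(\theta) u_i = 0$ fails — which it does unless $(t-\theta) \mid c_i(t)$ for all $i$ — we would need the residues at \emph{every} $\theta^{q^\ell}$ to vanish, but that is automatic from nothing; so this reasoning shows the relation at $t=\theta$ is forced only if... Let me restate the real argument: the relation $\sum_i c_i(\theta)u_i = 0$ \emph{is} forced (it is the residue at $t=\theta$), so we are done provided not all $c_i(\theta)$ vanish. If all $c_i(\theta) = 0$, write $c_i(t) = (t-\theta)d_i(t)$; but $\theta \notin \FF_q$-span issues arise, so instead factor out the \emph{monic} gcd over $\ok[t]$: since the original $c_i \in \FF_q[t]$ were taken coprime, $(t - \theta)$ cannot divide all of them (it is not an $\FF_q[t]$-factor, and coprimality over $\FF_q[t]$ combined with... ) — the cleanest: if $(t-\theta) \mid c_i(t)$ in $\ok[t]$ for all $i$, then applying all twists $(t - \theta^{q^\ell}) \mid c_i^{(\ell)}(t) = c_i(t)$, so $\prod_\ell (t - \theta^{q^\ell}) \mid c_i(t)$, impossible for a nonzero polynomial. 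Hence some $c_i(\theta) \neq 0$, and $\sum_i c_i(\theta) u_i = 0$ is the desired nontrivial $k$-linear relation, completing the contrapositive.

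The main obstacle I anticipate is the bookkeeping in the degenerate case where $(t-\theta)$ appears to divide the coefficients: one must be careful that the $c_i(t)$ genuinely lie in $\FF_q[t]$ (not merely $\ok[t]$) so that the twist-invariance $c_i^{(-1)} = c_i$ holds and forces the divisibility-by-infinitely-many-factors contradiction. Provided the reduction to $\FF_q[t]$-coefficients with trivial gcd is done at the outset, the residue extraction at $t = \theta$ is immediate from \eqref{E:AndGenFn}, and the argument is short. One should double-check that $\alpha_0 = 1$ (so the residue at $t=\theta$ is exactly $-\sum_i c_i(\theta) u_i$, using $\exp_\rho(z) = z + \cdots$), which is exactly the normalization stated after \eqref{E:AndGenFn}.
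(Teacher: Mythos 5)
Your proof is correct and is essentially the paper's own argument: the paper's entire proof is the observation that $\Res_{t=\theta}\sum_i c_i(t)f_{u_i}(t) = -\sum_i c_i(\theta)u_i$, applied to a putative nontrivial relation with coefficients cleared to $\FF_q[t]$. The degenerate case you labor over (all $c_i(\theta)=0$) is handled most simply by noting that $\theta$ is transcendental over $\FF_q$, so a nonzero $c_i\in\FF_q[t]$ cannot vanish at $t=\theta$; your twisting argument reaches the same conclusion, just less directly.
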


\begin{proof}
For $c_1(t), \dots, c_n(t) \in \ek$, the residue
$\Res_{t=\theta} \sum_{i=1}^n c_i(t)f_{u_i}(t) = -\sum_{i=1}^n c_i(\theta) u_i$.
\end{proof}

Once we fix an $A$-basis $\omega_1, \dots, \omega_r$ of the period
lattice $\Lambda_\rho$ of $\rho$, the Anderson generating functions
$f_1, \dots, f_r$ from \eqref{E:AndGen} are then $f_{\omega_1},
\dots, f_{\omega_r}$.  For $1 \leq i \leq r$ and $1 \leq j \leq
r-1$, it follows from \cite[p.~194]{Gekeler89}, \cite[\S
6.4]{Thakur} that
\begin{equation} \label{E:AndGenQuasi}
  F_{\tau^j}(\omega_i) = \sum_{m=0}^{\infty} \exp_\rho \biggl(
  \frac{\omega_i}{\theta^{m+1}} \biggr)^{q^j} \theta^m =
  f_i^{(j)}(\theta).
\end{equation}
Define the matrix
\[
  \Upsilon = \begin{pmatrix}
  f_1 & f_1^{(1)} & \cdots & f_1^{(r-1)} \\
  f_2 & f_2^{(1)} & \cdots & f_2^{(r-1)} \\
  \vdots & \vdots & & \vdots \\
  f_r & f_r^{(1)} & \cdots & f_r^{(r-1)}
\end{pmatrix}.
\]
By Lemma~\ref{L:fuLinInd}, $f_1, \dots, f_r$ are linearly
independent over $\ek$, and so if one argues as in
\cite[Lem.~1.3.3]{Goss} (the case of Moore determinants), it follows
that $\det\Upsilon \neq 0$.  Now letting
\[
  \Theta = \begin{pmatrix}
0 &   \cdots & 0  & t-\theta \\
1 &   \cdots & 0  & -\kappa_1 \\
\vdots & \ddots & \vdots & \vdots \\
0 & \cdots & 1 & -\kappa_{r-1}
\end{pmatrix},
\]
we see from \eqref{E:fuFnEq} that $\Upsilon^{(1)} = \Upsilon\Theta$.
To create a rigid analytic trivialization for $\Phi_\rho$, we let
\[
  V := \begin{pmatrix}
  \kappa_1 & \kappa_2^{(-1)} & \cdots & \kappa_{r-1}^{(-r+2)} & 1\\
  \kappa_2 & \kappa_3^{(-1)} & \cdots & 1 &\\
  \vdots & \vdots & & & \\
  \kappa_{r-1} & 1 & & & \\
  1 & & & &
  \end{pmatrix},
\]
and then set
\begin{equation} \label{E:PsiDef}
  \Psi_\rho := V^{-1} \bigl[ \Upsilon^{(1)} ]^{-1}.
\end{equation}
Since $V^{(-1)} \Phi_\rho = \Theta V$ and $\Upsilon^{(1)} =
\Upsilon\Theta$, it follows that $\Psi_\rho^{(-1)} = \Phi_\rho
\Psi_\rho$.  Thus the pre-$t$-motive $M_\rho$ is rigid analytically
trivial and is in the category of $t$-motives
(cf.~\cite[Prop.~3.4.7(c)]{Papanikolas}).

\begin{proposition} \label{P:PsiPer}
Given a Drinfeld $\eA$-module $\rho$ defined over $\ok$, the matrix
$\Psi_\rho$ defined in \eqref{E:PsiDef} and the period matrix
$\rP_\rho$ defined in \eqref{E:Prho} satisfy the following
properties.
\begin{enumerate}
\item[(a)] The entries of $\Psi_\rho$ are regular at $t=\theta$.
\item[(b)] $\trdeg_{\ok}\, \ok(\Psi_\rho(\theta)) = \dim
\Gamma_{\Psi_\rho}$.
\item[(c)] $\ok(\Psi_\rho(\theta)) = \ok(\rP_\rho)$.
\end{enumerate}
\end{proposition}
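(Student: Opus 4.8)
The plan is to establish the three parts in order, as (b) follows from (a) together with Theorem~\ref{T:TrDegGalGrp}, and (c) is the concrete link between the $t$-motivic data and the classical period matrix. First I would prove (a). The matrix $\Upsilon^{(1)}$ has entries $f_i^{(j)}$ for $1 \le i \le r$, $1 \le j \le r$, and from \eqref{E:AndGenFn} each $f_u$ has simple poles only at $t = \theta^{q}, \theta^{q^2}, \ldots$ (the pole at $t=\theta$ having been removed by the twist), so in fact $\Upsilon^{(1)}$ has entries that are regular at $t=\theta$; one must then check that $\det \Upsilon^{(1)}$ does not vanish at $t=\theta$, so that $[\Upsilon^{(1)}]^{-1}$ is regular there as well. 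For this I would specialize the functional equation $\Upsilon^{(1)} = \Upsilon \Theta$ (equivalently \eqref{E:fuFnEq}) and use the relation $\Psi_\rho^{(-1)} = \Phi_\rho \Psi_\rho$ with $\det \Phi_\rho = \pm(t-\theta)$: since $\det \Psi_\rho$ satisfies $(\det\Psi_\rho)^{(-1)} = \pm(t-\theta)\det\Psi_\rho$, and $\det\Psi_\rho = (\det V)^{-1}(\det\Upsilon^{(1)})^{-1}$ with $\det V$ a nonzero constant (it is $\pm 1$ by the anti-triangular shape), one deduces that $\det\Upsilon^{(1)}$ has a simple zero at each $\theta^{q^m}$ for $m \ge 1$ and no zero at $t=\theta$. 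Hence the entries of $\Psi_\rho$ are regular at $t=\theta$, giving (a). Alternatively, and more cleanly, one cites \cite[Prop.~3.1.3]{ABP}: since $M_\rho$ comes from the Anderson $t$-motive $\eM_\rho$, $\Psi_\rho$ can be taken in $\GL_r(\TT)$ with entries entire on $\CC_\infty$, so in particular regular at $t=\theta$.

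Part (b) is then immediate: by construction $\Phi_\rho \in \GL_r(\ok(t)) \cap \Mat_r(\ok[t])$ represents multiplication by $\sigma$ on $M_\rho$, $\det\Phi_\rho = \pm(t-\theta) = \pm(t-\theta)^s$ with $s=1$, $c = \pm 1 \in \ok^\times$, and by (a) we may regard $\Psi_\rho \in \GL_r(\TT)$ with entries regular (indeed entire) at $t=\theta$ as a rigid analytic trivialization for $\Phi_\rho$. Theorem~\ref{T:TrDegGalGrp} applies verbatim to yield $\trdeg_{\ok} \ok(\Psi_\rho(\theta)) = \dim \Gamma_{M_\rho}$, and by Theorem~\ref{T:TannFiber} $\Gamma_{M_\rho} \cong \Gamma_{\Psi_\rho}$, so $\dim \Gamma_{M_\rho} = \dim \Gamma_{\Psi_\rho}$.

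For part (c) I would compute $\Psi_\rho(\theta)$ and $\Psi_\rho(\theta)^{-1}$ explicitly in terms of the quasi-period values. From \eqref{E:PsiDef}, $\Psi_\rho(\theta)^{-1} = \Upsilon^{(1)}(\theta)\, V$. Now $\Upsilon^{(1)}(\theta)$ has $(i,j)$-entry $f_i^{(j)}(\theta)$ for $1 \le j \le r$; by \eqref{E:AndGenQuasi} the entries with $1 \le j \le r-1$ are exactly $F_{\tau^j}(\omega_i)$, the entries appearing in columns $2,\dots,r$ of $\rP_\rho$, and the $j=r$ column entries $f_i^{(r)}(\theta)$ are, by \eqref{E:fuFnEqSpec} applied with $u=\omega_i$ (so $\exp_\rho(\omega_i)=0$), equal to $\omega_i - \kappa_1 F_\tau(\omega_i) - \dots - \kappa_{r-1}F_{\tau^{r-1}}(\omega_i)$, an $\ok$-linear combination of entries of $\rP_\rho$ since $\kappa_1,\dots,\kappa_{r-1}\in\ok$. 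Hence every entry of $\Upsilon^{(1)}(\theta)$ lies in $\ok(\rP_\rho)$; since $V$ has entries in $\ok$ (they are $1$'s and twists of $\kappa_i$'s evaluated... actually $V$ has constant-in-$t$ entries $\kappa_j^{(-i)} \in \ok$), we get $\ok(\Psi_\rho(\theta)^{-1}) \subseteq \ok(\rP_\rho)$, i.e.\ $\ok(\Psi_\rho(\theta)) \subseteq \ok(\rP_\rho)$. Conversely, rearranging the same computation, the columns of $\rP_\rho$ are recovered $\ok$-linearly from the columns of $\Upsilon^{(1)}(\theta)$: columns $2,\dots,r$ of $\rP_\rho$ are columns $1,\dots,r-1$ of $\Upsilon^{(1)}(\theta)$ directly, and the first column $(\omega_i)$ of $\rP_\rho$ equals $f^{(r)}(\theta) + \kappa_1 f^{(1)}(\theta)+\dots+\kappa_{r-1}f^{(r-1)}(\theta)$ by \eqref{E:fuFnEqSpec}. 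Therefore $\ok(\rP_\rho) \subseteq \ok(\Upsilon^{(1)}(\theta)) = \ok(\Psi_\rho(\theta)^{-1}\cdot V^{-1}) \subseteq \ok(\Psi_\rho(\theta))$, and equality in (c) follows. The main obstacle is the non-vanishing of $\det\Upsilon^{(1)}$ at $t=\theta$ needed for (a) if one proves it by hand rather than invoking \cite[Prop.~3.1.3]{ABP}; once regularity is in place, (b) is a direct citation and (c) is pure bookkeeping with \eqref{E:AndGenQuasi} and \eqref{E:fuFnEqSpec}.
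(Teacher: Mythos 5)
Your part (c) is correct and is exactly the paper's argument (up to a harmless sign: the last column of $\Upsilon^{(1)}(\theta)$ is $-\omega_i-\sum_{s}\kappa_s F_{\tau^s}(\omega_i)$), and your overall architecture for (a)--(b) is the intended one. The problem is with (a), and it propagates into (b). Your primary, ``by hand'' argument claims that the functional equation $(\det\Psi_\rho)^{(-1)}=\pm(t-\theta)\det\Psi_\rho$ forces $\det\Upsilon^{(1)}$ to be nonzero at $t=\theta$. It does not. Since only the first column of $\Upsilon$ has poles at $t=\theta$ (simple, with residues $-\omega_i$), multilinearity of the determinant gives
$\det\Upsilon^{(1)}(\theta)=\pm\bigl[(t-\theta)\det\Upsilon\bigr]\big|_{t=\theta}=\pm\Res_{t=\theta}\det\Upsilon=\mp\det\bigl(\omega_i,\ F_{\tau}(\omega_i),\ \ldots,\ F_{\tau^{r-1}}(\omega_i)\bigr)=\mp\det \rP_\rho$,
so the non-vanishing you need is \emph{equivalent} to $\det\rP_\rho\neq 0$, i.e.\ to the non-degeneracy of the de Rham pairing (Gekeler's isomorphism \cite{Gekeler89}); this is a genuine input that the difference equation alone cannot supply, and your sketch gives no argument for it.

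Your ``alternative'' route is essentially the paper's proof, but as written it conflates ``\emph{some} rigid analytic trivialization of $\Phi_\rho$ lies in $\GL_r(\TT)$ with entire entries'' with a statement about the \emph{specific} matrix $\Psi_\rho$ of \eqref{E:PsiDef}, which is not known a priori to lie in $\GL_r(\TT)$ (its determinant is a unit multiple of $(\det\Upsilon^{(1)})^{-1}$, and zeros of $\det\Upsilon^{(1)}$ on $|t|_\infty\le 1$ are not excluded without further argument). The paper bridges this by \cite[Prop.~3.3.9(c), \S4.1.6]{Papanikolas}: there is $U\in\GL_r(\ek)$ with $\tPsi:=\Psi_\rho U\in\GL_r(\TT)$ a rigid analytic trivialization, entire by \cite[Prop.~3.1.3]{ABP}; since the entries of $U^{\pm1}$ lie in $\FF_q(t)$ and $\theta$ is transcendental over $\FF_q$, they are regular at $t=\theta$, which yields (a) and the identity $\ok(\Psi_\rho(\theta))=\ok(\tPsi(\theta))$. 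This identity is also what your (b) is missing: Theorem~\ref{T:TrDegGalGrp} requires a trivialization in $\GL_r(\TT)$, so it cannot be applied ``verbatim'' to $\Psi_\rho$ on the strength of (a) alone (regularity at $t=\theta$ is weaker than membership in $\GL_r(\TT)$); one applies it to $\tPsi$ and transfers the conclusion via $\ok(\Psi_\rho(\theta))=\ok(\tPsi(\theta))$ and $\Gamma_{M_\rho}\cong\Gamma_{\Psi_\rho}$ from Theorem~\ref{T:TannFiber}. With that bridge inserted, your proof becomes the paper's.
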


\begin{proof}
By \cite[Prop.~3.3.9(c), \S 4.1.6]{Papanikolas}, there exists a
matrix $U \in \GL_r(\ek)$ such that $\tPsi := \Psi_\rho U$ is a
rigid analytic trivialization of $\Phi$ and $\tPsi \in \GL_r(\TT)$.
By \cite[Prop.~3.1.3]{ABP}, the entries of $\tPsi$ converge on all
of $\CC_\infty$. Part (a) then follows since the entries of $\tPsi$
and $U^{-1}$ are all regular at $t = \theta$.  Part (b) follows
directly from Theorem~\ref{T:TrDegGalGrp}, since
$\ok(\Psi_\rho(\theta)) = \ok(\tPsi(\theta))$.  To prove (c) we
first observe that $\ok(\Psi_\rho(\theta)) =
\ok(\Upsilon^{(1)}(\theta))$ from \eqref{E:PsiDef}.  For $1 \leq i
\leq r$ and $1 \leq j \leq r-1$, it follows from
\eqref{E:AndGenQuasi} that $\Upsilon^{(1)}(\theta)_{ij} =
F_{\tau^j}(\omega_i)$.  From \eqref{E:fuFnEqSpec} and
\eqref{E:AndGenQuasi} we see that
\begin{equation}\label{E:Upsilon1}
  \Upsilon^{(1)}(\theta)_{ir} = -\omega_i -\sum_{s=1}^{r-1} \kappa_s
  F_{\tau^s}(\omega_i),
\end{equation}
and thus $\ok(\Upsilon^{(1)}(\theta)) = \ok(\rP_\rho)$.
\end{proof}

\subsection{The Galois group $\Gamma_{\Psi_\rho}$}
The following theorem demonstrates an explicit connection between
the Galois group $\Gamma_{\Psi_\rho}$ of a Drinfeld module $\rho$
arising from difference equations and the $t$-adic representation
attached to the Galois action on the torsion points of $\rho$.

\begin{theorem} \label{T:tadictoDiff}
Let $\rho$ be a Drinfeld $\eA$-module that is defined over a
field $K \subseteq \ok$ such that $K$ is a finite extension of $k$;
$\End(\rho) \subseteq K[\tau]$; and $Z_{\Psi_\rho}$ is defined over
$K(t)$.  Then
\[
  \varphi_t(\Gal(K^{\sep}/K)) \subseteq
  \Gamma_{\Psi_\rho}(\laurent{\FF_q}{t}).
\]
\end{theorem}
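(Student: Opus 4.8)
The plan is to show that for each $\epsilon \in \Gal(K^{\sep}/K)$, the matrix $g_\epsilon \in \GL_r(\power{\FF_q}{t})$ lies in $\Gamma_{\Psi_\rho}(\laurent{\FF_q}{t})$. Recall that $\Gamma_{\Psi_\rho}$ is characterized by the property $\Gamma_{\Psi_\rho}(\oL) = \Psi_\rho^{-1} Z_{\Psi_\rho}(\oL)$, where $Z_{\Psi_\rho}$ is the smallest closed $K(t)$-subscheme of $\GL_r/K(t)$ containing $\Psi_\rho$ as an $\LL$-point. So the key will be to produce, from $\epsilon$ and $\Psi_\rho$, another $\LL$-point of $Z_{\Psi_\rho}$ and observe that its ratio with $\Psi_\rho$ is $g_\epsilon$ (or $g_\epsilon^{-1}$). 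The natural candidate is $\epsilon$ applied coefficientwise to $\Psi_\rho$ (viewed in $\power{K^{\sep}}{t} \subseteq \LL$, after clearing the harmless $V$ factor): since $Z_{\Psi_\rho}$ is defined over $K(t)$ and $\epsilon$ fixes $K(t)$, the image $\epsilon(\Psi_\rho)$ is again an $\LL$-point of $Z_{\Psi_\rho}$.

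First I would make the Galois action explicit on $\Psi_\rho = V^{-1}[\Upsilon^{(1)}]^{-1}$. The matrix $V$ has entries in $\ok$ lying in $K$ (they are twists of the $\kappa_i$, which by hypothesis lie in $K$ since $\End(\rho) \subseteq K[\tau]$ forces the coefficients of $\rho_t$ into $K$), so $\epsilon$ fixes $V$. Hence $\epsilon(\Psi_\rho) = V^{-1}\,\epsilon([\Upsilon^{(1)}]^{-1}) = V^{-1}\,[\epsilon(\Upsilon^{(1)})]^{-1}$. By Corollary~\ref{C:GalUpsilon}, $\epsilon(\Upsilon^{(1)}) = g_\epsilon \Upsilon^{(1)}$, so $[\epsilon(\Upsilon^{(1)})]^{-1} = [\Upsilon^{(1)}]^{-1} g_\epsilon^{-1}$, and therefore
\[
  \epsilon(\Psi_\rho) = V^{-1}[\Upsilon^{(1)}]^{-1} g_\epsilon^{-1} = \Psi_\rho\, g_\epsilon^{-1}.
\]
Thus $\Psi_\rho^{-1}\,\epsilon(\Psi_\rho) = g_\epsilon^{-1}$.

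Next I would argue that $\epsilon(\Psi_\rho)$ is an $\LL$-point of $Z_{\Psi_\rho}$. The entries of $\Psi_\rho$ lie in a finite separable extension $K'(t)$ of $K(t)$ after possibly enlarging — more carefully, $\Psi_\rho$ has entries in $\power{K^{\sep}}{t}[\,1/\text{(something)}\,] \subseteq \LL$, and $\epsilon$ extends to a field automorphism of $\LL$ over $K(t)$ acting coefficientwise (the extension of the coefficientwise action to the Tate-algebra fraction field is routine since $\epsilon$ is continuous for the Gauss norm). Since $Z_{\Psi_\rho}$ is cut out by polynomials over $K(t)$ that vanish at $\Psi_\rho$, applying $\epsilon$ shows those same polynomials vanish at $\epsilon(\Psi_\rho)$; hence $\epsilon(\Psi_\rho) \in Z_{\Psi_\rho}(\LL)$. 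Combining with the computation above, $g_\epsilon^{-1} = \Psi_\rho^{-1}\epsilon(\Psi_\rho) \in \Psi_\rho^{-1} Z_{\Psi_\rho}(\LL) \subseteq \Gamma_{\Psi_\rho}(\LL)$. Since $g_\epsilon$ has entries in $\laurent{\FF_q}{t}$ and $\Gamma_{\Psi_\rho}$ is an $\FF_q(t)$-group scheme, $g_\epsilon \in \Gamma_{\Psi_\rho}(\laurent{\FF_q}{t})$, and this is what we wanted.

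The main obstacle I anticipate is making rigorous the claim that $\epsilon$ acts on the relevant subfield of $\LL$ containing the entries of $\Psi_\rho$ in a way compatible with the defining equations of $Z_{\Psi_\rho}$ over $K(t)$ — i.e., justifying that coefficientwise Galois action on $\power{K^{\sep}}{t}$ really does send $Z_{\Psi_\rho}(\LL)$ to itself. This requires knowing that the entries of $\Psi_\rho$ (or $\Upsilon^{(1)}$) lie in $\power{K^{\sep}}{t}$ — which follows from \eqref{E:AndGen} since the torsion points $\xi_{i,m}$ are separable over $K$ — and that $\epsilon$ extends continuously to the completed fraction field $\LL$ fixing $K(t)$ pointwise. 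Once that is in place, the identity $\Psi_\rho^{-1}\epsilon(\Psi_\rho) = g_\epsilon^{-1}$ does all the work, and the separability/smoothness caveats of Theorem~\ref{T:DiffGalGrp} are not even needed for this containment.
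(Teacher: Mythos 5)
Your proposal is correct and follows essentially the same route as the paper: the identity $\epsilon(\Psi_\rho)=\Psi_\rho g_\epsilon^{-1}$ from Corollary~\ref{C:GalUpsilon}, combined with the fact that the defining ideal of $Z_{\Psi_\rho}$ has generators over $K(t)$ whose coefficients $\epsilon$ fixes, shows $\Psi_\rho g_\epsilon^{-1}\in Z_{\Psi_\rho}(\LL)$, and then Theorem~\ref{T:DiffGalGrp} yields $g_\epsilon^{-1}\in\Gamma_{\Psi_\rho}(\laurent{\FF_q}{t})$. The only difference is that you make explicit the (routine) extension of the coefficientwise Galois action to the entries of $\bigl[\Upsilon^{(1)}\bigr]^{-1}$, which the paper leaves implicit.
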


\begin{proof}
Let $\epsilon \in \Gal(K^{\sep}/K)$, and let $g_\epsilon \in
\GL_r(\power{\FF_q}{t})$ be defined as in \S\ref{S:tadic}.  Then by
\eqref{E:PsiDef} and Corollary~\ref{C:GalUpsilon},
\begin{equation} \label{E:epsilonPsi}
  \epsilon(\Psi_\rho) = \epsilon \bigl(V^{-1} \bigl[\Upsilon^{(1)}
  \bigr]^{-1} \bigr) = V^{-1} \bigl[ g_{\epsilon} \Upsilon^{(1)}
  \bigr]^{-1} = \Psi_\rho g_{\epsilon}^{-1}.
\end{equation}
Now let $S \subseteq K(t)[X,1/\det X]$ denote a finite set of
generators of the defining ideal of $Z_{\Psi_\rho}$.  Thus for any
$h \in S$, we have $h(\Psi_\rho) = 0$.  Since $\epsilon$ fixes the
coefficients of $h$, we have
\[
  0 = \epsilon(h(\Psi_\rho)) = h(\Psi_\rho g_{\epsilon}^{-1}).
\]
Therefore, $\Psi_\rho g_{\epsilon}^{-1} \in
Z_{\Psi_\rho}(\laurent{\CC_\infty}{t})$.  By
Theorem~\ref{T:DiffGalGrp}, we see that $g_{\epsilon}^{-1} \in
\Gamma_{\Psi_\rho}(\laurent{\FF_q}{t})$.
\end{proof}

\begin{remark}
Using basic properties of Drinfeld modules \cite[\S 4.7]{Goss} and
the fact that $Z_{\Psi_\rho}$ is of finite type over $\ok(t)$, we
can select a field $K \subseteq \ok$ satisfying the properties in
the previous theorem for any Drinfeld $\eA$-module $\rho$ defined
over $\ok$ .
\end{remark}

Combining Theorem~\ref{T:tadictoDiff} with theorems of Pink
\cite{Pink97} on the image of Galois representations attached to
Drinfeld modules, we calculate $\Gamma_{\Psi_\rho}$ exactly.  It is
also possible to identify the Galois group $\Gamma_{\Psi_\rho}$ with
the Hodge-Pink group \cite{PinkHS} (e.g.\ see \cite{Juschka} for
additional applications to pure $t$-motives), and we thank U.~Hartl
for pointing out the proper references to us.  However, here we have
a direct proof suited to our context.

\begin{theorem}\label{T:MainThm1}
Let $\rho$ be a Drinfeld $\eA$-module of rank $r$ defined over
$\ok$.  Set $\eK_{\rho}:=\End_{\cT}(M_{\rho})$, which embeds
naturally into $\End(M_{\rho}^{\rB}) = \Mat_r(\ek)$ by Theorem
\ref{T:TannFiber}.  Define the centralizer
$\Cent_{{\GL_r}/\ek}(\eK_{\rho})$ to be the algebraic group over $\ek$ such that
for any $\ek$-algebra $\eR$,
\[
\Cent_{{\GL_r}/\ek}
  (\eK_{\rho})(\eR) := \left\{ \gamma\in \GL_r(\eR) \mid \gamma g=g \gamma \textnormal{\
  for all\ }g\in \eR\otimes_{\ek} \eK_\rho\subseteq \Mat_r(\eR)   \right\}.
\]
Then
\[
  \Gamma_{\Psi_\rho} = \Cent_{{\GL_r}/\ek}(\eK_{\rho}).
\]
In particular, $\trdeg_{\ok}\, \ok(\rP_\rho) = r^2/s$, where $s =
[\eK_{\rho}:\ek]$.
\end{theorem}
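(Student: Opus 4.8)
The plan is to prove the equality $\Gamma_{\Psi_\rho} = \Cent_{{\GL_r}/\ek}(\eK_\rho)$ by a dimension-and-containment argument, and then to read off the transcendence degree from Proposition~\ref{P:PsiPer} and Theorem~\ref{T:TrDegGalGrp}. First I would establish the easy inclusion $\Gamma_{\Psi_\rho} \subseteq \Cent_{{\GL_r}/\ek}(\eK_\rho)$. Since $\cR_{M_\rho} \approx \Rep{\Gamma_{\Psi_\rho}}{\ek}$ via the fiber functor $N \mapsto N^{\rB}$ (Theorem~\ref{T:TannFiber}), and $\eK_\rho = \End_\cT(M_\rho)$ acts on $M_\rho^{\rB} = \ek^{\oplus r}$ commuting with every morphism in the category, the image of $\Gamma_{\Psi_\rho}$ in $\GL_r$ must commute with the image of $\eK_\rho$ in $\Mat_r(\ek)$; this is the standard Tannakian fact that the Galois group lands in the centralizer of the endomorphism algebra of the generating object. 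The reverse inclusion is the substantive part.

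For the reverse inclusion I would argue by dimension count, using that both sides are (geometrically) connected. By Corollary~\ref{C:SimpleMrho}, $M_\rho$ is simple, so $\eK_\rho$ is a division algebra over $\ek$ of some degree $s = [\eK_\rho:\ek]$; by semisimplicity of the $\eK_\rho$-module $\ek^{\oplus r}$ one computes $\dim_\ek \Cent_{{\GL_r}/\ek}(\eK_\rho) = r^2/s$. On the other hand, by Proposition~\ref{P:PsiPer}(b),(c) together with Theorem~\ref{T:TrDegGalGrp} (applied to $\Phi_\rho$, whose determinant is a unit times $(t-\theta)$), we have $\dim \Gamma_{\Psi_\rho} = \trdeg_{\ok} \ok(\rP_\rho)$, and the lower bound $\trdeg_{\ok}\ok(\rP_\rho) \geq r^2/s$ is exactly where Theorem~\ref{T:tadictoDiff} and Pink's theorem enter. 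Namely, $\Gamma_{\Psi_\rho}$ contains $\varphi_t(\Gal(K^{\sep}/K))$ by Theorem~\ref{T:tadictoDiff}; by Pink's theorem \cite{Pink97} the image of the $t$-adic Galois representation is open in the centralizer of $\End(\rho) \otimes_\eA \eA_t$ inside $\GL_r(\eA_t)$, and $\End(\rho)\otimes_\eA \ek \cong \eK_\rho$ under the identification of Proposition~\ref{P:FunctorMovDrin}. An open subgroup of $\Cent_{{\GL_r}/\ek}(\eK_\rho)(\laurent{\FF_q}{t})$ is Zariski dense in $\Cent_{{\GL_r}/\ek}(\eK_\rho)$ (the centralizer is a connected algebraic group and its $\laurent{\FF_q}{t}$-points, even the $t$-adically open ones, are Zariski dense since $\laurent{\FF_q}{t}$ is an infinite field in which the group has lots of rational points). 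Hence the Zariski closure of $\varphi_t(\Gal(K^{\sep}/K))$ inside $\GL_r/\ek$ is all of $\Cent_{{\GL_r}/\ek}(\eK_\rho)$, forcing $\dim\Gamma_{\Psi_\rho} \geq r^2/s$. Combined with the opposite inclusion from the Tannakian argument, which gives $\dim\Gamma_{\Psi_\rho}\leq r^2/s$, we get equality of dimensions, and since $\Gamma_{\Psi_\rho}$ is a closed subgroup of the connected group $\Cent_{{\GL_r}/\ek}(\eK_\rho)$ of the same dimension, the two coincide.

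Finally, the transcendence degree statement is immediate: $\trdeg_{\ok}\ok(\rP_\rho) = \trdeg_{\ok}\ok(\Psi_\rho(\theta)) = \dim\Gamma_{\Psi_\rho} = r^2/s$, using Proposition~\ref{P:PsiPer}(b),(c) and the just-proved identification of $\Gamma_{\Psi_\rho}$. One technical point worth flagging: Theorem~\ref{T:tadictoDiff} requires that $Z_{\Psi_\rho}$ be defined over $K(t)$ and $\End(\rho) \subseteq K[\tau]$ for a suitable finite extension $K/k$, which is arranged by the remark following that theorem; so one should begin by descending to such a $K$, noting that $\Gamma_{\Psi_\rho}$ itself is insensitive to this choice.

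The step I expect to be the main obstacle is the transfer from Pink's openness theorem — which describes the $t$-adic topological closure of the Galois image inside a $p$-adic-type group of matrices — to a statement about Zariski closures of algebraic groups over $\ek$. One must be careful that Pink's centralizer (of $\End(\rho)\otimes\eA_t$ in $\GL_r(\eA_t)$) really is the $\laurent{\FF_q}{t}$-points of the algebraic group $\Cent_{{\GL_r}/\ek}(\eK_\rho)$ — this uses $\eK_\rho \cong \End(\rho)\otimes_\eA\ek$ from Proposition~\ref{P:FunctorMovDrin} and the fact that taking centralizers commutes with the relevant base changes — and that $t$-adic openness implies Zariski density, which holds because $\Cent_{{\GL_r}/\ek}(\eK_\rho)$ is smooth and connected over the infinite field $\ek$ (smoothness over $\ek$ is where one may need the appendix's input on algebraic groups, cf.\ Lemma~\ref{L:SmoothG}, when $\eK_\rho/\ek$ is inseparable).
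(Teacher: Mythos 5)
Your proposal is correct and follows essentially the same route as the paper: the Tannakian embedding $\Gamma_{\Psi_\rho}\hookrightarrow\Cent_{{\GL_r}/\ek}(\eK_\rho)$, Pink's openness theorem combined with Theorem~\ref{T:tadictoDiff} to force equality of dimensions, connectedness of the centralizer (it is cut out by linear equations) to upgrade to equality of groups, and Proposition~\ref{P:PsiPer} with the centralizer-dimension count $r^2/s$ for the transcendence degree. One small correction: no input from the appendix is needed here, since $\Cent_{{\GL_r}/\ek}(\eK_\rho)$ is an open subscheme of a linear subspace of $\Mat_r$ and hence smooth and connected regardless of whether $\eK_\rho/\ek$ is separable (the appendix is only needed later, for Lemma~\ref{L:SmoothG}).
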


\begin{proof}
By general Tannakian theory \cite[\S II]{DMOS}, the tautological
representation $\Gamma_{M} \hookrightarrow \GL(M^{\rB})$ is
functorial in $M$, and so there is a natural embedding
\begin{equation} \label{E:GammaMInj}
  \Gamma_{\Psi_\rho} \hookrightarrow \Cent_{{\GL_r}/\ek}
  (\eK_{\rho}).
\end{equation}
By \cite[Thm.~0.2]{Pink97}, the Zariski closure of
$\varphi_t(\Gal(K^{\sep}/K))$ inside $\GL_r(\laurent{\FF_q}{t})$ is
open in $\Cent_{\GL_r(\laurent{\FF_q}{t})}(\eK_{\rho})$ with respect
to the $t$-adic topology.  Thus from Theorem~\ref{T:tadictoDiff} and
\eqref{E:GammaMInj},
\[
  \dim \Gamma_{\Psi_\rho} = \dim \Cent_{{\GL_r}/\ek}(\eK_{\rho}).
\]
Since the defining polynomials of $\Cent_{{\GL_r}/\ek}(\eK_\rho)$
are degree one polynomials, it is connected, and hence
$\Gamma_{\Psi_\rho} = \Cent_{{\GL_r}/\ek}(\eK_{\rho})$.  By
Proposition~\ref{P:PsiPer} it now suffices to show that $\dim
\Gamma_{\Psi_\rho} = r^2/s$.  Since $\Gamma_{\Psi_\rho}$ is smooth
by Theorem~\ref{T:DiffGalGrp}(b), we have $\dim
\Gamma_{\Psi_\rho} = \dim \Lie \Gamma_{\Psi_\rho}$.  Now $\Lie
\Gamma_{\Psi_\rho} = \Cent_{\Mat_r(\ek)}(\eK_{\rho})$, and the
result follows from \cite[Thm.~3.15(3)]{FD}.
\end{proof}

\begin{corollary}\label{C:ResScalars}
For any $\ek$-algebra $\eR$, we have $\Gamma_{\Psi_{\rho}}(\eR)\cong
\GL_{r/s}(\eR\otimes_{\ek}\eK_{\rho})  $ naturally in $\eR$.
\end{corollary}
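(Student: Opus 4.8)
The plan is to deduce Corollary~\ref{C:ResScalars} directly from the identification $\Gamma_{\Psi_\rho} = \Cent_{{\GL_r}/\ek}(\eK_\rho)$ established in Theorem~\ref{T:MainThm1}, by recognizing the centralizer of a semisimple commutative $\ek$-subalgebra of $\Mat_r(\ek)$ as a general linear group over that algebra. Concretely, $\eK_\rho$ is a field (the fraction field of $\End(\rho)$, cf.\ Proposition~\ref{P:FunctorMovDrin}) of degree $s$ over $\ek$, embedded in $\Mat_r(\ek)$ via its action on $M_\rho^{\rB}$, which is an $r$-dimensional $\ek$-vector space. First I would observe that $M_\rho^{\rB}$, being a module over the field $\eK_\rho$, is a free $\eK_\rho$-module, necessarily of rank $r/s$ (so in particular $s \mid r$, which also follows from $\trdeg_{\ok}\ok(\rP_\rho) = r^2/s$ being an integer). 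Fixing an $\eK_\rho$-basis identifies $M_\rho^{\rB} \cong \eK_\rho^{\oplus r/s}$ and $\End_{\eK_\rho}(M_\rho^{\rB}) \cong \Mat_{r/s}(\eK_\rho)$ as a subalgebra of $\Mat_r(\ek)$.

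Next I would check the key algebraic fact: for a commutative \'etale (here even field) $\ek$-subalgebra $\eK_\rho \subseteq \Mat_r(\ek) = \End_{\ek}(M_\rho^{\rB})$, the centralizer $\Cent_{\Mat_r(\ek)}(\eK_\rho)$ equals $\End_{\eK_\rho}(M_\rho^{\rB})$, because a $\ek$-linear endomorphism of $M_\rho^{\rB}$ commutes with every element of $\eK_\rho$ if and only if it is $\eK_\rho$-linear. This is the statement already invoked at the Lie-algebra level in the proof of Theorem~\ref{T:MainThm1} via \cite[Thm.~3.15(3)]{FD}; here I need its functorial (scheme-theoretic) version. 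So for any $\ek$-algebra $\eR$, an element $\gamma \in \GL_r(\eR) = \GL(\eR \otimes_{\ek} M_\rho^{\rB})$ centralizes $\eR \otimes_{\ek} \eK_\rho$ precisely when it is an automorphism of $\eR \otimes_{\ek} M_\rho^{\rB}$ as a module over $\eR \otimes_{\ek} \eK_\rho$. Since $M_\rho^{\rB}$ is free of rank $r/s$ over $\eK_\rho$, after base change $\eR \otimes_{\ek} M_\rho^{\rB}$ is free of rank $r/s$ over $\eR \otimes_{\ek} \eK_\rho$, whence its automorphism group is $\GL_{r/s}(\eR \otimes_{\ek} \eK_\rho)$. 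Assembling these identifications gives
\[
  \Gamma_{\Psi_\rho}(\eR) = \Cent_{{\GL_r}/\ek}(\eK_\rho)(\eR) \cong \GL_{r/s}(\eR \otimes_{\ek} \eK_\rho),
\]
and naturality in $\eR$ is clear since every step (base change, the free-module identification with a fixed basis) is compatible with $\eR$-algebra homomorphisms.

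I expect the main (and only real) obstacle to be the commutativity of the centralizer identification with arbitrary base change when $\eK_\rho/\ek$ is inseparable. The clean statement $\Cent_{\Mat_r(\eR)}(\eR \otimes_\ek \eK_\rho) = \Mat_{r/s}(\eR \otimes_\ek \eK_\rho)$ follows formally from $M_\rho^{\rB}$ being $\eK_\rho$-free once one knows that forming the centralizer commutes with the flat base change $\ek \to \eR$; this is automatic here because $\Cent_{{\GL_r}/\ek}(\eK_\rho)$ was defined scheme-theoretically by the linear equations $\gamma g = g\gamma$ for $g$ ranging over a fixed $\ek$-basis of $\eK_\rho$, and a finite system of linear equations has its solution scheme commuting with all base change. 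Hence no separability hypothesis on $\eK_\rho/\ek$ is needed for this corollary, and the only input beyond Theorem~\ref{T:MainThm1} is the elementary module-theoretic computation above. One small bookkeeping point to be careful about: since the defining equations of $\Cent_{{\GL_r}/\ek}(\eK_\rho)$ are linear, $\Cent_{{\GL_r}/\ek}(\eK_\rho)$ is a smooth connected $\ek$-group scheme, consistent with $\GL_{r/s}$ over the \'etale-or-not algebra $\eK_\rho$, so there is no discrepancy between the functor-of-points description and the underlying scheme.
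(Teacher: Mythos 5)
Your proposal is correct and follows essentially the same route as the paper: the paper's proof also notes that $M_\rho^{\rB}$ is a $\eK_\rho$-vector space of dimension $r/s$, identifies the $\eR$-points of $\Cent_{{\GL_r}/\ek}(\eK_\rho)$ with the $\eR\otimes_{\ek}\eK_\rho$-linear automorphisms of $\eR\otimes_{\ek}M_\rho^{\rB}$, hence with $\GL_{r/s}(\eR\otimes_{\ek}\eK_\rho)$, and then invokes Theorem~\ref{T:MainThm1}. Your additional remarks on base change and inseparability only make explicit what the paper leaves implicit.
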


\begin{proof}
Since we have the natural embedding $\eK_{\rho}\hookrightarrow
\End(M_{\rho}^{\rB})$, $M_{\rho}^{\rB}$ is a $\eK_{\rho}$-vector
space of dimension $r/s$. As the group of $\eR$-valued points of
$\Cent_{{\GL_r}/\ek}(\eK_{\rho})$ is identified with $\{\phi\in
\Aut_{\eR}(\eR\otimes_{\ek}M_{\rho}^{\rB}) \mid \phi\ \textnormal{is
$\eR\otimes_{\ek}\eK_{\rho}$-linear} \}$, which itself is identified
with $\GL_{r/s}(\eR\otimes_{\ek} \eK_{\rho})$, the result follows
from Theorem \ref{T:MainThm1}.
\end{proof}

\begin{corollary}\label{C:CompReducible}
Every object $W\in \Rep{\Gamma_{M_{\rho}}}{\ek}$ is completely
reducible.
\end{corollary}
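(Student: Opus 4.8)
The plan is to deduce complete reducibility directly from the explicit shape of the Galois group obtained in Theorem~\ref{T:MainThm1}. By that theorem $\Gamma_{M_{\rho}} = \Cent_{{\GL_r}/\ek}(\eK_{\rho})$, and by Corollary~\ref{C:ResScalars} this group is, upon viewing $M_{\rho}^{\rB}$ as a free $\eK_{\rho}$-module of rank $r/s$, the functor $\eR \mapsto \GL_{r/s}(\eR\otimes_{\ek}\eK_{\rho})$; equivalently it is the Weil restriction $\Res_{\eK_{\rho}/\ek}\GL_{r/s}$. First I would record that, at least when $\eK_{\rho}$ is separable over $\ek$, this presentation exhibits $\Gamma_{M_{\rho}}$ as a connected reductive group over $\ek$ whose base change to $\overline{\ek}$ is a product of copies of $\GL_{r/s}$, so that the structure theory of reductive groups becomes available.

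Next, using the Tannakian equivalence $\cR_{M_{\rho}} \approx \Rep{\Gamma_{M_{\rho}}}{\ek}$ of Theorem~\ref{T:TannFiber}, it suffices to prove that every object of $\Rep{\Gamma_{M_{\rho}}}{\ek}$ is a direct sum of simple objects. Since $\cR_{M_{\rho}}$ is generated as a Tannakian category by the single simple object $M_{\rho}$ (Corollary~\ref{C:SimpleMrho}), every object is a subquotient of a finite direct sum of tensor powers of $M_{\rho}$ and of $M_{\rho}^{\vee}$, and the plan is to invoke the reductivity of $\Gamma_{M_{\rho}}$ to conclude that all such subquotients remain semisimple. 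In particular this yields that $M_{\rho}^{\oplus m}$ is a semisimple $\Gamma_{M_{\rho}}$-module, which is the shape in which the corollary is applied in \S\ref{sub:CalGalGroup} to run Hardouin's argument on unipotent radicals of Galois groups of extensions.

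The step I expect to demand the most care is the passage from the structure of $\Gamma_{M_{\rho}}$ to complete reducibility, compounded by the possibility --- not excluded a priori --- that $\eK_{\rho}/\ek$ is inseparable. In the inseparable case the Weil restriction $\Res_{\eK_{\rho}/\ek}\GL_{r/s}$ is no longer reductive, as it acquires a nontrivial unipotent part, so one cannot transcribe the separable argument; this is precisely the algebraic-group subtlety underlying the smoothness statement of Lemma~\ref{L:SmoothG} and settled in Appendix~\ref{appendix}. I would accordingly first treat the separable case and then bootstrap to the general case by means of that analysis.
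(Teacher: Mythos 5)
Your plan hinges on the inference ``$\Gamma_{M_{\rho}}$ is (in the separable case) a connected reductive group, hence every object of $\Rep{\Gamma_{M_{\rho}}}{\ek}$ is semisimple,'' and this inference is false over $\ek=\FF_q(t)$: in characteristic $p$ reductive does not imply linearly reductive. By Nagata's theorem the only smooth connected linearly reductive groups in characteristic $p$ are tori, so already in the simplest situation $s=1$, $r\geq 2$ (no CM, $\eK_{\rho}=\ek$ separable!), where Theorem~\ref{T:MainThm1} gives $\Gamma_{M_{\rho}}=\GL_{r}/\ek$, your argument would assert that every finite-dimensional representation of $\GL_{r}$ in characteristic $p$ is completely reducible --- but, e.g., $\mathrm{Sym}^{p}$ of the standard $\GL_{2}$-module is reducible yet indecomposable. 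So the gap is not confined to the inseparable case; the separable case already breaks. The proposed bootstrap via Appendix~\ref{appendix} does not repair this: Proposition~\ref{glvprop} describes smooth subgroups of a semidirect product $G\ltimes V$ and is invoked only for Lemma~\ref{L:SmoothG}; it says nothing about semisimplicity of representations, and in the inseparable case $\mathrm{R}_{\eK_{\rho}/\ek}(\GL_{r/s})$ is merely pseudo-reductive, so no reductive structure theory is available there either.

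The paper's proof uses none of this. It exploits Theorem~\ref{T:MainThm1} on the level of points: $\Gamma_{\Psi_{\rho}}(\ek)=\Cent_{\GL_{r}(\ek)}(\eK_{\rho})$, so the $\ek$-subalgebra of $\Mat_{r}(\ek)$ generated by $\Gamma_{\Psi_{\rho}}(\ek)$ is $C:=\Cent_{\Mat_{r}(\ek)}(\eK_{\rho})$, which is a \emph{simple} ring by the double centralizer theorem \cite{FD}. Regarding $W$ as a $C$-module, module theory over a simple ring gives $W\cong\oplus W_{i}$ with each $W_{i}$ a simple $C$-module; and each $W_{i}$ is again an object of $\Rep{\Gamma_{M_{\rho}}}{\ek}$ because for every $\ek$-algebra $\eR$ the group $\Gamma_{\Psi_{\rho}}(\eR)=\Cent_{\GL_{r}(\eR)}(\eR\otimes_{\ek}\eK_{\rho})$ is exactly the unit group of $C\otimes_{\ek}\eR$, so $\eR\otimes_{\ek}W_{i}$ is stable under it. This mechanism is characteristic-free and requires no separability hypothesis on $\eK_{\rho}/\ek$ and no case distinction at all; the semisimplicity comes from the simplicity of the algebra spanned by $\Gamma_{M_{\rho}}(\ek)$ inside $\End_{\ek}(M_{\rho}^{\rB})$, not from any reductivity of the group. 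If you want to keep your outline, that observation is the step you must substitute for ``reductivity.''
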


\begin{proof}
Let $\ek[\Gamma_{\Psi_{\rho}}(\ek) ]\subseteq \Mat_{r}(\ek)$ be the
$\ek$-algebra generated by all elements of $\Gamma_{\Psi}(\ek)$.
Since by Theorem \ref{T:MainThm1},
$\Gamma_{\Psi_{\rho}}(\ek)=\Cent_{\GL_{r}(\ek)}(\eK_{\rho})$, we
have $\ek[\Gamma_{\Psi_{\rho}}(\ek)]=
\Cent_{\Mat_{r}(\ek)}(\eK_{\rho}) $, which is a simple ring
(cf.~\cite[Thm. 3.15]{FD}). Regarding $W$ as a module over
$\Cent_{\Mat_{r}(\ek)}(\eK_{\rho})$, we have a decomposition $W\cong
\oplus W_{i}$ such that each $W_{i}$ is a simple
$\Cent_{\Mat_{r}(\ek)}(\eK_{\rho})$-module.

Now it suffices to show that each $W_{i}$ is an object in
$\Rep{\Gamma_{M_{\rho}}}{\ek}$.  To prove it, for any $\ek$-algebra
$\eR$ we first observe that $\eR\otimes_{\ek} W_{i}$ is a module
over $\eR\otimes_{\ek} \Cent_{\Mat_{r}(\ek)}(\eK_{\rho})$, which is
naturally isomorphic to $\Cent_{\Mat_{r}(\eR)}(\eR\otimes_{\ek}
\eK_{\rho})$. By Theorem \ref{T:MainThm1},
$\Gamma_{\Psi_{\rho}}(\eR)$ is equal to
$\Cent_{\GL_{r}(\eR)}(\eR\otimes_{\ek}  \eK_{\rho})$, which is the
group of units in $\Cent_{\Mat_{r}(\eR)}(\eR\otimes_{\ek}
\eK_{\rho})$. It follows that $\eR\otimes_{\ek} W_{i}$ is a
$\Gamma_{\Psi_{\rho}}(\eR)$-module, and hence $W_{i}\in
\Rep{\Gamma_{M_{\rho}}}{\ek}$.
\end{proof}

\section{Drinfeld logarithms and $\Ext_{\cT}^{1}(\one,M_{\rho})$}
\label{sec:L.IndepofMi} In this section, we fix a Drinfeld
$\eA$-module $\rho$ of rank $r$ defined over $\ok$ as in
\eqref{E:rhot} with $\kappa_{r}=1$, and we fix an $A$-basis
$\{\omega_{1},\ldots,\omega_{r}\}$ of $\Lambda_{\rho}$. We let
$M_{\rho}$ be the $t$-motive associated to $\rho$ together with a
fixed $\ok(t)$-basis $\bm \in \Mat_{r\times 1}(M_\rho)$,
$\Phi_{\rho}$ as in \eqref{E:Phirho}, and $\Psi_{\rho}$ as in
\eqref{E:PsiDef}. Finally, we let $K_\rho := k \otimes_A \End(\rho)$
and $\eK_\rho := \End_{\cT}(M_\rho)$.

\subsection{Endomorphisms of $t$-motives}
\begin{proposition}\label{P:F11}
Given $e\in \End_{\cT}(M_{\rho})$, let $E=(E_{ij})\in
\Mat_{r}(\ok(t))$ satisfy $e(\bm)=E \bm$.
\begin{enumerate}
\item [(a)] Each entry $E_{ij}$ is regular at $t=\theta$,
$\theta^{q}$, $\theta^{q^{2}}, \ldots$.
\item [(b)] $E_{21}(\theta)=\cdots=E_{r1}(\theta)=0$.
\item [(c)] $E_{11}(\theta)$ lies in $K_\rho$.
\end{enumerate}
\end{proposition}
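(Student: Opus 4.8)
The plan is to extract the matrix $E$ explicitly from the action of $e$ on $\eM_\rho$ and then read off the three assertions from the structure of $\eM_\rho$ as a $\ok[\sigma]$-module of rank $1$. First I would recall from Proposition~\ref{P:FunctorMovDrin} that $\eK_\rho = \End_\cT(M_\rho) \cong K_\rho$, and that every endomorphism of $M_\rho$ comes from an endomorphism of the Anderson $t$-motive $\eM_\rho$, hence from an element of $\End(\rho)$. So write $e$ as the map induced by some $b = \sum_\ell c_\ell \tau^\ell \in \End(\rho) \subseteq \ok[\tau]$; then the induced map on $\eM_\rho$ is the $\ok[\sigma]$-linear endomorphism $\beta$ with $\beta(m_1) = b^* m_1$, where $b^* = \sum_\ell c_\ell^{(-\ell)}\sigma^\ell$. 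Since $\eM_\rho = \ok[\sigma] m_1$ and $m_i$ corresponds to $\sigma^{i-1} m_1$ under the identification coming from $\Phi_\rho$ (more precisely, $\sigma m_i = m_{i+1}$ for $i < r$), the matrix $E$ representing $\beta$ on the basis $m_1, \dots, m_r$ can be computed by repeatedly applying $\sigma \mathbf{m} = \Phi_\rho \mathbf{m}$ to reduce $\sigma^\ell m_1 = \sigma^{\ell-1} m_2 = \cdots$ back into $\ok[t]$-linear combinations of $m_1,\dots,m_r$. The key point is that reduction modulo $\sigma$ only introduces the entry $t-\theta$ and the twisted $\kappa_j$'s from the bottom row of $\Phi_\rho$, all of which are polynomials in $t$; therefore every $E_{ij}$ is a polynomial in $t$ (an element of $\ok[t]$, not merely $\ok(t)$), which immediately gives (a), regularity at every $\theta^{q^m}$ being automatic for polynomials.

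For (b) and (c) I would evaluate at $t=\theta$ and compare with the rigid analytic trivialization. The clean route is to use $\Psi_\rho$: since $\beta$ commutes with $\sigma$ on $M_\rho$, one has $E^{(-1)}\Phi_\rho = \Phi_\rho E$ in $\Mat_r(\ok(t))$, and therefore $\Psi_\rho^{-1} E \Psi_\rho$ is fixed by the twisting operator $f \mapsto f^{(-1)}$, hence lies in $\Mat_r(\ek)$ — in fact it is exactly the matrix of $e$ acting on $M_\rho^{\rB}$ under the fiber functor, i.e.\ the image of $e \in \eK_\rho$ under the embedding $\eK_\rho \hookrightarrow \Mat_r(\ek)$ of Theorem~\ref{T:MainThm1}. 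Writing $\varepsilon := \Psi_\rho^{-1} E \Psi_\rho \in \Mat_r(\ek)$, we get $E = \Psi_\rho \varepsilon \Psi_\rho^{-1}$. Now specialize at $t = \theta$: by Proposition~\ref{P:PsiPer}(a) the entries of $\Psi_\rho$ are regular there, and $\Psi_\rho(\theta)^{-1}$ makes sense because $\det\Psi_\rho$ is regular and nonvanishing at $\theta$ (its reciprocal being, up to a constant and a power of $t-\theta$, expressible via $\det\Phi_\rho$). Thus $E(\theta) = \Psi_\rho(\theta)\,\varepsilon\,\Psi_\rho(\theta)^{-1}$, so the first column of $E(\theta)$ is $\Psi_\rho(\theta)$ applied to the first column of $\varepsilon\,\Psi_\rho(\theta)^{-1}$ — but it is more efficient to argue directly: the first column of $\Phi_\rho$, together with $\Psi_\rho^{(-1)} = \Phi_\rho\Psi_\rho$, pins down the first column of $\Psi_\rho$ near $t=\theta$ in terms of the Anderson generating functions $f_i$ via \eqref{E:PsiDef}.

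The most hands-on way, and the one I expect the authors take, is to work with $\Upsilon$ directly: from \eqref{E:PsiDef}, $\Psi_\rho = V^{-1}[\Upsilon^{(1)}]^{-1}$, so $E = V^{-1}[\Upsilon^{(1)}]^{-1}\varepsilon\,\Upsilon^{(1)} V$, and then use \eqref{E:AndGenQuasi}, \eqref{E:Upsilon1} to evaluate $\Upsilon^{(1)}(\theta)$ in terms of the period matrix $\rP_\rho$, whose first column is $(\omega_1,\dots,\omega_r)^{\tr}$. Alternatively — and this is cleaner for (b),(c) — observe that $\beta$ corresponds under $\iota$ to the element $c_0 = b(\theta) \in \End(\rho) \subseteq \ok$, acting on $\Lambda_\rho$ by multiplication; the bottom $r-1$ rows of $E(\theta)$ applied to the period vector must reproduce multiplication by $c_0$ on quasi-periods in a way that forces $E_{21}(\theta) = \cdots = E_{r1}(\theta) = 0$, because the relation $e(\mathbf m) = E\mathbf m$ combined with $\sigma \mathbf m = \Phi_\rho \mathbf m$ and the shape of $\Phi_\rho$ shows $E$ is lower-Hessenberg-like with its first column controlled by the single scalar $b^*$ reduced mod $\sigma$, whose constant-in-$\sigma$ term is $b(\theta)^{(0)} = b(\theta) \in K_\rho$ — this gives $E_{11}(\theta) = b(\theta) \in K_\rho$, which is (c), while the higher entries of the first column carry a factor of $(t-\theta)$ coming from the reduction $\sigma m_r = (t-\theta)m_1 - \sum \kappa_j^{(\cdot)} m_{j+1}$, hence vanish at $t=\theta$, which is (b). The main obstacle is bookkeeping: correctly tracking how $b^* = \sum c_\ell^{(-\ell)}\sigma^\ell$ reduces against $\Phi_\rho$ to produce $E$, and verifying that each time a $\sigma^r$ is absorbed it contributes exactly one factor of $(t-\theta)$ to the first-column entries below the diagonal. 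I would handle this by an induction on $\deg_\sigma b^*$, using $\eM_\rho = \ok[\sigma]m_1$ and the explicit recursion $\sigma m_i = m_{i+1}$ ($i<r$), $\sigma m_r = (t-\theta)m_1 - \kappa_1^{(-1)}m_2 - \cdots - \kappa_{r-1}^{(-r+1)}m_r$, so that each wrap-around past index $r$ visibly injects a $(t-\theta)$ into precisely the $m_1$-coefficient.
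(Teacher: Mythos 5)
Your ``hands-on'' route is genuinely different from the paper's proof and, once cleaned up, it works. The paper does not compute on $\eM_\rho$ at all: it sets $\eta:=\Psi_\rho^{-1}E\Psi_\rho$, notes $\eta^{(-1)}=\eta$ so $\eta\in\Mat_r(\ek)$, specializes the identity $\eta\,\Upsilon^{(1)}V=\Upsilon^{(1)}VE$ at $t=\theta$, and then deduces (b) from a substantive transcendence input, namely the $\ok$-linear independence of $F_{\tau}(\omega_1),\dots,F_{\tau^{r-1}}(\omega_1)$ together with a maximal $K_\rho$-linearly independent set of periods (Theorem~\ref{T:MainThm1}, or Brownawell); part (c) then follows because $\eta(\theta)\in\Mat_r(k)$ stabilizes the $k$-span of $\Lambda_\rho$, so $E_{11}(\theta)$ multiplies it into itself. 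Your direct computation avoids that input entirely: the $m_1$-coefficient of $\sigma x$ is always $(t-\theta)$ times a twist of the $m_r$-coefficient of $x$, which is exactly your wrap-around observation. In fact you can skip the induction on $\deg_\sigma b^*$ altogether: comparing the $(i,1)$-entries of $\Phi_\rho E=E^{(-1)}\Phi_\rho$ gives $E_{(i+1)1}=(t-\theta)\,E_{ir}^{(-1)}$ for $1\le i\le r-1$, so (b) follows from (a) (applied to the twisted entries) with no appeal to periods at all; and your identification of the $m_1$-coefficient of $b^*m_1$ as $c_0$ plus a multiple of $(t-\theta)$ gives $E_{11}(\theta)=c_0=\iota(b)\in K_\rho$, which is (c). This is arguably more elementary than the paper's argument, whose (b) rests on Theorem~\ref{T:MainThm1}.

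There is one genuine misstatement you must repair: Proposition~\ref{P:FunctorMovDrin} gives $\End_{\cT}(M_\rho)\cong K_\rho$, the \emph{fraction field}, and the functor from Anderson $t$-motives is fully faithful only up to isogeny, so a general $e\in\End_{\cT}(M_\rho)$ does \emph{not} come from $\End(\rho)$ on the nose, only up to a denominator $a(t)\in\FF_q[t]$. Consequently your claim that every $E_{ij}$ lies in $\ok[t]$ is too strong; the correct statement (and exactly what the paper cites for (a)) is that the denominators of the $E_{ij}$ lie in $\FF_q[t]$. The patch is immediate: write $e=a(t)^{-1}\beta$ with $\beta$ induced by $b\in\End(\rho)$; since $\theta$ is transcendental over $\FF_q$ we have $a(\theta^{q^i})\neq 0$, so (a) survives, (b) is unchanged, and (c) becomes $E_{11}(\theta)=\iota(b)/a(\theta)\in K_\rho$ since $a(\theta)\in k^{\times}$. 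Finally, two of your intermediate heuristics should be discarded rather than fixed: the assertion that the bottom rows of $E(\theta)$ ``must reproduce multiplication by $c_0$ on quasi-periods'' is not what drives the argument (on its own it would require precisely the linear-independence input the paper uses), and your first suggested route via $\varepsilon=\Psi_\rho^{-1}E\Psi_\rho$ is the paper's but is left incomplete as you state it --- to conclude from it one needs either that independence statement or the invertibility of $\rP_\rho$.
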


\begin{proof}
In \cite[p.~146]{Papanikolas}, it has been shown that the
denominator of each $E_{ij}$ is in $\eA$, proving (a).  To prove
(b), define $\eta:=\Psi_{\rho}^{-1}E\Psi_{\rho}$. Using the equation
$\Phi_{\rho}E = E^{(-1)} \Phi_{\rho}$, we have $\eta^{(-1)}=\eta$
and hence $\eta\in {\Mat}_{r}(\ek)$. Using \eqref{E:PsiDef} and
\eqref{E:Upsilon1}, specializing at $t=\theta$ on both sides of
$\eta \Upsilon^{(1)}V=\Upsilon^{(1)}VE$ gives rise to the following
relation
\begin{equation} \label{E:etaomega}
\begin{aligned}
-\sum_{i=1}^{r} \eta_{1i}(\theta) \omega_{i} = &{}
-E_{11}(\theta)\omega_{1}+\bigl(E_{21}(\theta)\kappa_{2}^{(-1)}
+\dots + E_{r1}(\theta)
\bigr)F_{\tau}(\omega_{1})  \\
& {}+\bigl(
E_{21}(\theta)\kappa_{3}^{(-1)}
+\dots + E_{(r-1)1}(\theta)
\bigr)F_{\tau^{2}}(\omega_{1}) \\
& \qquad\qquad \vdots \\
&{} + \bigl( E_{21}(\theta)\kappa_{r-1}^{(-1)}+E_{31}(\theta) \bigr)
F_{\tau^{r-2}}(\omega_{1}) \\
&{} + E_{21}(\theta)F_{\tau^{r-1}}(\omega_{1}).
\end{aligned}
\end{equation}
Let $s = [K_\rho:k]$.  By Theorem~\ref{T:MainThm1} (or by
\cite[Prop.~2]{Brownawell}), we see that $\{F_{\tau}(\omega_{1}),
\ldots, F_{\tau^{r-1}}(\omega_{1})\}$ joined with any maximal
$K_\rho$-linearly independent subset of $\Lambda_{\rho}$ are
themselves linearly independent over $\ok$.  It follows that for
each $1\leq i\leq r-1$ the coefficients of
$F_{\tau^{i}}(\omega_{1})$ in \eqref{E:etaomega} are zero. Thus we
have $E_{21}(\theta) = \dots = E_{r1}(\theta)=0$.  To prove (c),
specialize both sides of the first columns of $\eta \Upsilon^{(1)}V
= \Upsilon^{(1)}VE$ at $t=\theta$. Then by~(b),
\[
\eta(\theta)\begin{pmatrix}
-\omega_{1} \\
\vdots \\
-\omega_{r} \\
\end{pmatrix}
= E_{11}(\theta)
\begin{pmatrix}
-\omega_{1} \\
 \vdots \\
-\omega_{r} \\
\end{pmatrix}.
\]
Since $\eta(\theta)\in \Mat_{r}(k)$, the definition of $\End(\rho)$
implies that $E_{11}(\theta)$ falls in $K_\rho$.
\end{proof}

\subsection{The $\eK_{\rho}$-independence of $X_{i}$ in
$\Ext_{\cT}^{1}(\one, M_{\rho})$} \label{Sub:ObjExt} Given any $u\in
\CC_{\infty}$ with $\exp_{\rho}(u) = \alpha \in \ok$, we set $f_{u}$
to be the Anderson generating function of $u$ as in
\eqref{E:AndGenFn}. Let $\bh_{\alpha}$ be the column vector
$(\alpha,0,\ldots,0)^{\tr}\in \Mat_{r\times 1}(\ok)$. We define the
pre-$t$-motive $X_{\alpha}$ of dimension $r+1$ over $\ok(t)$ on
which multiplication by $\sigma$ is given by $\Phi_{\alpha}:=
\left(\begin{smallmatrix}
\Phi_{\rho} & \mathbf{0} \\
\bh_{\alpha}^{\tr} & 1
\end{smallmatrix} \right)$. We further define
\[
\bg_{\alpha}:=\begin{pmatrix}
-(t-\theta)f_{u}(t)-\alpha \\
-\bigl( \kappa_{2}^{(-1)}f_{u}^{(1)}(t) +
\dots+\kappa_{r-1}^{(-1)}f_{u}^{(r-2)}(t)+f_{u}^{(r-1)}(t)\bigr) \\
-\bigl(\kappa_{3}^{(-2)}f_{u}^{(1)}(t) + \dots +
\kappa_{r-1}^{(-2)}f_{u}^{(r-3)}(t)+f_{u}^{(r-2)}(t) \bigr) \\
\vdots \\
-f_{u}^{(1)}(t)
\end{pmatrix}
\in \Mat_{r\times 1}(\TT),
\]
and then we have the difference equation
$\Phi_{\rho}^{\tr}\bg_{\alpha}^{(-1)}=\bg_{\alpha}+\bh_{\alpha}$.
Putting
\begin{equation}\label{E:PsiAlpha}
\Psi_{\alpha}:=  \begin{pmatrix}
      \Psi_{\rho} & \mathbf{0} \\
      \bg_{\alpha}^{\tr}\Psi_{\rho} & 1 \\
    \end{pmatrix} \in \Mat_{r+1}(\TT ),
\end{equation}
we see that $\Psi_{\alpha}^{(-1)}=\Phi_{\alpha}\Psi_{\alpha}$, and
hence $X_{\alpha}$ is rigid analytically trivial. Based on
Proposition \ref{P:FunctorMovDrin}, the argument in
\cite[Prop.~6.1.3]{Papanikolas} shows that $X_{\alpha}$ is a
$t$-motive and so $X_{\alpha}$ represents a class in
$\Ext_{\cT}^{1}(\one,M_{\rho})$.

The group $\Ext^1_{\cT}(\one,M_{\rho})$ also has the structure of a
$\eK_\rho$-vector space by pushing out along $M_\rho$.  To see this
explicitly, suppose the operation of  $e \in \eK_\rho$ on $M_\rho$
is represented by $E \in \Mat_r(\ok(t))$ as in
Proposition~\ref{P:F11}. By choosing a $\ok(t)$-basis for an
extension $X$ so that multiplication by $\sigma$ on $X$ is
represented by $\left( \begin{smallmatrix} \Phi_\rho & 0 \\ \bv & 1
\end{smallmatrix}\right)$, we see that multiplication by $\sigma$ on
the push-out $e_*X$ is represented by $\left( \begin{smallmatrix}
\Phi_\rho & 0 \\ \bv E & 1 \end{smallmatrix} \right)$.  Likewise, as
is also standard, Baer sum in $\Ext^1_{\cT}(\one, M_{\rho})$ is
achieved by adding entries in the~$\bv$ row vector.

\begin{theorem}\label{T:IndMiExt} Suppose $u_{1}, \dots, u_{n}\in
\CC_{\infty}$ satisfy $\exp_{\rho}(u_{i}) = \alpha_{i}\in \ok$ for
$i=1, \dots, n$.  For each $i$, let $X_i := X_{\alpha_i}$ as above.
If $\dim_{K_\rho} \Span_{K_\rho} (\omega_1, \dots, \omega_r,
u_1,\dots, u_n) = r/s + n$, then the classes of $X_{1}, \dots,
X_{n}$ in $\Ext_{\cT}^1(\one,M_\rho)$ are linearly independent
over~$\eK_\rho$.
\end{theorem}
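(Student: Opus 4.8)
The plan is to argue by contradiction: suppose $X_1,\dots,X_n$ are $\eK_\rho$-linearly dependent in $\Ext^1_{\cT}(\one,M_\rho)$, and show this forces a $K_\rho$-linear relation among $\omega_1,\dots,\omega_r,u_1,\dots,u_n$ that contradicts the dimension hypothesis. Concretely, a dependence relation means there exist $e_1,\dots,e_n\in\eK_\rho$, not all zero, with $e_{1*}X_1 +_{\mathrm{Baer}} \cdots +_{\mathrm{Baer}} e_{n*}X_n$ the trivial class in $\Ext^1_{\cT}(\one,M_\rho)$. Using the explicit description of push-out and Baer sum recalled just before the theorem — push-out by $e$ with matrix $E$ replaces the bottom row vector $\bv$ by $\bv E$, and Baer sum adds the bottom rows — this says: if $E_i\in\Mat_r(\ok(t))$ represents $e_i$ as in Proposition~\ref{P:F11}, then the pre-$t$-motive given by $\Phi$-matrix $\left(\begin{smallmatrix}\Phi_\rho & \mathbf{0}\\ \sum_i \bh_{\alpha_i}^{\tr}E_i & 1\end{smallmatrix}\right)$ is the trivial extension, hence there is a $\ok(t)$-basis change making its bottom row zero. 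Equivalently, there exists $\bdelta\in\Mat_{1\times r}(\ok(t))$ with $\bdelta^{(-1)}\Phi_\rho = \bdelta + \sum_i \bh_{\alpha_i}^{\tr}E_i$, i.e.\ $\sum_i \bh_{\alpha_i}^{\tr}E_i$ is a ``$\sigma$-coboundary'' for $\Phi_\rho$.

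The next step is to transport this coboundary identity through the rigid analytic trivialization and specialize at $t=\theta$. Set $\bg_i := \bg_{\alpha_i}$ as in the construction of $X_i$; recall $\Phi_\rho^{\tr}\bg_i^{(-1)} = \bg_i + \bh_{\alpha_i}$. I would combine the coboundary equation for $\bdelta$ with these equations for the $\bg_i$ and with $\Phi_\rho E_i = E_i^{(-1)}\Phi_\rho$ to show that the vector $\bv := \bdelta^{\tr} + \sum_i E_i^{\tr}\bg_i$ (or a closely related expression) satisfies $\Phi_\rho^{\tr}\bv^{(-1)} = \bv$, so that after multiplying by $\Psi_\rho$ the vector $\bv^{\tr}\Psi_\rho$ is fixed by $\sigma$ and hence lies in $\Mat_{1\times r}(\ek)$. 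Writing $\Psi_\rho = V^{-1}[\Upsilon^{(1)}]^{-1}$ from \eqref{E:PsiDef}, this gives an identity of the form $\bv^{\tr}V^{-1}[\Upsilon^{(1)}]^{-1} = \bc\in\Mat_{1\times r}(\ek)$, i.e.\ $\bv^{\tr}V^{-1} = \bc\,\Upsilon^{(1)}$. Now specialize at $t=\theta$: using $\Upsilon^{(1)}(\theta)_{i,j} = F_{\tau^j}(\omega_i)$ for $1\le j\le r-1$ together with the boundary relation \eqref{E:Upsilon1} for the last column, and using Proposition~\ref{P:F11}(a) to know everything is regular there, the left side becomes an explicit $\ok$-linear combination of $\alpha_1,\dots,\alpha_n$ and the entries $E_{i,\ell k}(\theta)$, while the right side becomes an $\ok$-linear combination of $\omega_1,\dots,\omega_r$ and $F_\tau(\omega_1),\dots,F_{\tau^{r-1}}(\omega_1)$ (and possibly other quasi-periods); crucially the $\alpha_i$-terms will carry the Anderson-generating-function values, which upon specialization via \eqref{E:fuFnEqSpec} produce the logarithms $u_i$ themselves.

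The heart of the argument — and what I expect to be the main obstacle — is to read off from this specialized identity a genuine $K_\rho$-linear relation among $\omega_1,\dots,\omega_r,u_1,\dots,u_n$ with not-all-zero coefficients, and to verify the coefficients of the $u_i$ are not all zero. The $\ek$-coefficients $\bc$ and the values $E_{i,11}(\theta)\in K_\rho$ (by Proposition~\ref{P:F11}(c)) are the candidate $K_\rho$-coefficients after evaluating at $t=\theta$; Proposition~\ref{P:F11}(b) (the vanishing $E_{i,21}(\theta)=\cdots=E_{i,r1}(\theta)=0$) should be exactly what kills the ``unwanted'' quasi-period terms coming from the first coordinate, reducing the identity to one purely among $\omega_1,\dots,\omega_r,u_1,\dots,u_n$ over $K_\rho$ — here I would invoke Theorem~\ref{T:MainThm1} (equivalently \cite[Prop.~2]{Brownawell}) to know that $\{F_\tau(\omega_1),\dots,F_{\tau^{r-1}}(\omega_1)\}$ joined to a $K_\rho$-basis of $\Lambda_\rho$ is $\ok$-linearly independent, so any surviving quasi-period coefficients are forced to vanish, pinning down the relation. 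One then checks that the $u_i$-coefficient attached to index $i$ is, up to sign, $E_{i,11}(\theta)\in K_\rho$, and that these cannot all vanish: if every $E_{i,11}(\theta)=0$ then by Proposition~\ref{P:F11}(b) the whole first column of each $E_i$ vanishes at $t=\theta$, and chasing $\Phi_\rho E_i = E_i^{(-1)}\Phi_\rho$ together with the fact (Corollary~\ref{C:SimpleMrho}) that $M_\rho$ is simple — so $\eK_\rho$ is a division algebra and a nonzero $e_i$ acts invertibly — should force all $e_i=0$, contradicting nontriviality of the dependence. Thus we obtain a nontrivial $K_\rho$-relation among $\omega_1,\dots,\omega_r,u_1,\dots,u_n$, contradicting $\dim_{K_\rho}\Span_{K_\rho}(\omega_1,\dots,\omega_r,u_1,\dots,u_n) = r/s+n$, and the theorem follows. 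The delicate bookkeeping is in step two: correctly assembling $\bv$ so that the twist-invariance holds exactly, and tracking through the $V$, $\Upsilon$, and the boundary column \eqref{E:Upsilon1} so that the $\exp_\rho(u_i)=\alpha_i$ terms recombine via \eqref{E:fuFnEqSpec} into clean $u_i$'s rather than leftover combinations of $\alpha_i$ and quasi-periods.
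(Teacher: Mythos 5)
Your overall route is the paper's own: represent the Baer sum $N={e_1}_*X_1+\cdots+{e_n}_*X_n$ by the block matrix with bottom row $\sum_i\bh_{\alpha_i}^{\tr}E_i$, use triviality of $N$ to produce a row vector $(\gamma_1,\dots,\gamma_r)$ over $\ok(t)$ satisfying the coboundary equation, verify that $\mathbf{w}:=(\gamma_1,\dots,\gamma_r)+\sum_i\bg_{\alpha_i}^{\tr}E_i$ has the property that $\mathbf{w}\Psi_\rho$ is fixed by twisting and hence equals some $\mathbf{c}=(\delta_1,\dots,\delta_r)\in\Mat_{1\times r}(\ek)$, and specialize at $t=\theta$ using Proposition~\ref{P:F11}; your twist-invariance computation (up to an inessential sign in the coboundary) is exactly the identity the paper exploits. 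The gaps are in how you extract the contradiction. If you specialize the full vector identity $\mathbf{w}V^{-1}=\mathbf{c}\,\Upsilon^{(1)}$, the left-hand side contains the quasi-logarithms $F_{\tau^j}(u_i)$ coming from the lower entries of $\bg_{\alpha_i}(\theta)$, and Theorem~\ref{T:MainThm1}/Brownawell only controls $\ok$-linear relations among periods and quasi-periods, not quasi-logarithms; so ``killing surviving quasi-period coefficients'' does not reduce the identity to one purely among the $\omega_j$ and $u_i$. The correct move --- the paper's --- is to take only the \emph{first} entry of $\mathbf{w}=\mathbf{c}\,\Upsilon^{(1)}V$: there Proposition~\ref{P:F11}(b) leaves only $(u_i-\alpha_i)(E_i)_{11}(\theta)$ from $\bg_{\alpha_i}^{\tr}E_i$, and the first column of $\Upsilon^{(1)}V$ specializes to $(-\omega_1,\dots,-\omega_r)^{\tr}$ by \eqref{E:Upsilon1}, so no quasi-period or quasi-logarithm term survives at all and no second appeal to Theorem~\ref{T:MainThm1} is needed (its role is already inside the proof of Proposition~\ref{P:F11}(b)).

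More seriously, the relation reached at that stage is $\gamma_1(\theta)+\sum_i(u_i-\alpha_i)(E_i)_{11}(\theta)=-\sum_j\delta_j(\theta)\omega_j$, which still carries the algebraic constant $\gamma_1(\theta)-\sum_i\alpha_i(E_i)_{11}(\theta)$; an inhomogeneous relation with a nonzero algebraic constant term does not contradict the homogeneous hypothesis $\dim_{K_\rho}\Span_{K_\rho}(\omega_1,\dots,\omega_r,u_1,\dots,u_n)=r/s+n$. Your hope that the $\alpha_i$ ``recombine via \eqref{E:fuFnEqSpec} into clean $u_i$'s'' is not what happens: the $u_i-\alpha_i$ already comes from the residue of $f_{u_i}$ at $t=\theta$ in the first entry of $\bg_{\alpha_i}$, and the constant is disposed of by specializing the first entry of the coboundary equation itself, namely $\gamma_r^{(-1)}(t-\theta)+\sum_i\alpha_i(E_i)_{11}=\gamma_1$, giving $\gamma_1(\theta)=\sum_i\alpha_i(E_i)_{11}(\theta)$ and hence the clean relation $\sum_i(E_i)_{11}(\theta)u_i+\sum_j\delta_j(\theta)\omega_j=0$ with $\delta_j(\theta)\in k$. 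Without this step your argument does not close. Finally, your nonvanishing argument should be tightened: since $\eK_\rho\cong K_\rho$ is a field, a nonzero $e_i$ has an inverse in $\eK_\rho$ whose matrix is also regular at $t=\theta$ by Proposition~\ref{P:F11}(a), so $E_i(\theta)$ is invertible; by (b) its first column is $((E_i)_{11}(\theta),0,\dots,0)^{\tr}$, forcing $(E_i)_{11}(\theta)\in K_\rho^{\times}$ by (c) --- no chase through $\Phi_\rho E_i=E_i^{(-1)}\Phi_\rho$ is required, and the vague ``should force all $e_i=0$'' step is replaced by this one-line observation.
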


\begin{proof}
For each $i$ we define $\bh_{i}:=\bh_{\alpha_{i}}$,
$\bg_{i}:=\bg_{\alpha_{i}}$, $\Phi_{i}:=\Phi_{\alpha_{i}}$,
$\Psi_{i}:=\Psi_{\alpha_{i}}$ as in the preceding paragraphs.
Suppose that there exist $e_{1}, \dots, e_{n} \in \eK_{\rho}$, not
all zero, so that $N := {e_{1}}_{*}X_{1}+ \dots + {e_{n_{*}}}X_{n}$
is trivial in $\Ext_{\cT}^{1}(\one,M_{\rho})$.  Fix $E_{i}
\in\Mat_{r}(\ok(t))$ so that $e_{i}(\bm)=E_{i}\bm$ for each $i$.  By
choosing an appropriate $\ok(t)$-basis $\bn$ for~$N$, multiplication
by $\sigma$ on $N$ and a corresponding rigid analytic trivialization
are represented by matrices
\[
\Phi_{N}:= \begin{pmatrix}
      \Phi_{\rho} & \mathbf{0} \\
      \sum_{i=1}^{n}\bh_{i}^{\tr}E_{i} & 1
    \end{pmatrix}
\in \GL_{r+1}(\ok(t)), \quad \Psi_{N}:= \begin{pmatrix}
      \Psi_{\rho} & \mathbf{0} \\
    \sum_{i=1}^{n}\bg_{i}^{\tr}E_{i}\Psi_{\rho}   & 1
    \end{pmatrix}
\in \GL_{r+1}(\LL).
\]
Since $N$ is trivial in $\Ext_{\cT}^{1}(\one,M_{\rho})$, there
exists a matrix $\gamma= \left( \begin{smallmatrix} \Id_r & 0 \\
\gamma_1\ \cdots\ \gamma_r & 1
\end{smallmatrix} \right)
\in \GL_{r+1}(\ok(t))$, so that if we set $\bn':=\gamma \bn$, then
we have $ \sigma \bn'= (\Phi_{\rho}\oplus (1)) \bn'$, where
$\Phi_{\rho}\oplus (1) \in \GL_{r+1}(\ok(t))$ is the block diagonal
matrix with $\Phi_{\rho}$ and $1$ down the diagonal and zeros
elsewhere. Thus,
\begin{equation}\label{E:gammaPhiN}
\gamma^{(-1)}\Phi_{N}= \left(\Phi_{\rho}\oplus (1)\right) \gamma.
\end{equation}
Note that by \cite[p.~146]{Papanikolas} all denominators of $\gamma$
are in $\FF_{q}[t]$ and so in particular each $\gamma_{i}$ is
regular at $t=\theta$, $\theta^{q}$, $\theta^{q^{2}},\ldots$.

Using \eqref{E:gammaPhiN}, we have $(\gamma \Psi_{N})^{(-1)}=
(\Phi_{\rho}\oplus(1)) ( \gamma \Psi_{N})$.  By
\cite[\S4.1.6]{Papanikolas}, for some $\delta = \left(
\begin{smallmatrix} \Id_r & 0 \\ \delta_1\ \cdots\ \delta_r & 1
\end{smallmatrix} \right) \in \GL_{r+1}(\ek)$, we have $\gamma \Psi_{N}= (\Psi_{\rho}\oplus (1)) \delta$.
It follows that $(\gamma_{1},\dots,\gamma_{r}) +
\sum_{i=1}^{n}\bg_{i}^{\tr}E_{i} = (\delta_{1},\dots,\delta_{r})
\Psi_{\rho}^{-1}$.  Note that for each $i$, the first entry of
$\bg_{i}(\theta)$ is given by $u_{i}-\alpha_{i}$. Hence, using
Proposition \ref{P:F11}(b) and specializing both sides of this
equation at $t=\theta$, we obtain
\begin{equation}\label{E:Relgammaomega}
\gamma_{1}(\theta)+\sum_{i=1}^{n}(u_{i}-\alpha_{i})(E_i)_{11}(\theta)
= -\sum_{j=1}^{r}\delta_{j}(\theta)\omega_{j}.
\end{equation}
On the other hand, we find
$\gamma_{n}^{(-1)}(t-\theta)+\sum_{i=1}^{n}\alpha_{i} (E_i)_{11} =
\gamma_{1}$ from the $(r+1,1)$-entry of both sides of
\eqref{E:gammaPhiN}.  Specializing both sides of this equation at
$t=\theta$ implies $\gamma_{1}(\theta) =
\sum_{i=1}^n\alpha_{i}(E_i)_{11}(\theta)$.  Thus, from
\eqref{E:Relgammaomega} we obtain
\[
\sum_{i=1}^{n}(E_i)_{11}(\theta)u_{i}+\sum_{j=1}^{r}\delta_{j}(\theta)
\omega_{j} =0.
\]
The assumption that $e_{1}, \dots, e_{n}$ are not all zero implies
$E_{i}$ is nonzero for some $i$.  By Proposition
\ref{P:FunctorMovDrin} we have $\eK_{\rho}\cong K_{\rho}$ and
$E_{i}$ is invertible, and hence Proposition \ref{P:F11}(bc) implies
that $(E_i)_{11}(\theta) \in K_{\rho}^\times$, which gives the
desired contradiction.
\end{proof}

\section{Algebraic independence of Drinfeld logarithms}
\label{sec:DrinLogs}

\subsection{The case of $\eA = \FF_q[t]$} \label{sub:CalGalGroup}
We continue with the notation of \S\ref{sec:L.IndepofMi}.  We
suppose we have $u_1, \dots, u_n \in \CC_\infty$ with
$\exp_\rho(u_i) = \alpha_i \in \ok$ for each $i$, and then maintain
the choices from Theorem~\ref{T:IndMiExt} and its proof.  We note
that the rigid analytic trivializations $\Psi_i$ of each $X_i$ have
entire entries, which is assured by \cite[Prop.~3.1.3]{ABP}.  The
matrix $\Psi := \oplus_{i=1}^n \Psi_i$ provides a rigid analytic
trivialization for the $t$-motive $X := \oplus_{i=1}^n X_i$.  Our
goal in this section is to calculate the Galois group $\Gamma_X$.
{From} the construction of each $\Psi_i$, we see that
\[
\ok(\Psi(\theta))=\ok\left(
\cup_{i=1}^{r-1}\cup_{m=1}^{n}\cup_{j=1}^{r}\left\{\omega_{j},F_{\tau^{i}}(\omega_{j}),u_{m},F_{\tau^{i}}(u_{m})
\right\}  \right).
\]
By Theorem~\ref{T:TrDegGalGrp} we have $\dim
\Gamma_{\Psi}=\trdeg_{\ok}\ok(\Psi(\theta))$, and Theorem
\ref{T:MainThm1} then implies
\begin{equation}\label{E:dimtrdeg}
\dim \Gamma_{\Psi}=\trdeg_{\ok}\ok(\Psi(\theta))\leq
\frac{r^{2}}{s}+rn.
\end{equation}

Now let $N$ be the $t$-motive defined by $\Phi_N \in
\GL_{rn+1}(\ok(t))$ with rigid analytic trivialization $\Psi_N \in
\GL_{rn+1}(\TT)$:
\[
\Phi_{N}:= \begin{pmatrix}
\Phi_{\rho} &  & &  \\
& \ddots &  &  \\
&  & \Phi_{\rho} &  \\
\bh^{\tr}_{\alpha_{1}} & \cdots & \bh^{\tr}_{\alpha_{n}} & 1
\end{pmatrix},
\quad\Psi_{N}:=\begin{pmatrix}
\Psi_{\rho} &  &  &  \\
& \ddots &  &  \\
&  & \Psi_{\rho} &  \\
\bg^{\tr}_{\alpha_{1}}\Psi_{\rho} & \cdots
& \bg^{\tr}_{\alpha_{n}}\Psi_{\rho} & 1 \\
\end{pmatrix}.
\]
By the definition of $\Gamma_{\Psi_N}$, for any $\ek$-algebra $\eR$
each element $\nu \in \Gamma_{\Psi_N}(\eR)$ is of the form $\left(
\begin{smallmatrix} \oplus_{i=1}^n\gamma & 0 \\ * & 1
 \end{smallmatrix} \right)$, for some $\gamma\in
\Gamma_{\Psi_{\rho}}(\eR)$.  Note that $N$ is an extension of $\one$
by $M^{n}_{\rho}$, which is the pullback of $X \twoheadrightarrow
\one^{n}$ and the diagonal embedding $\one\hookrightarrow \one^{n}$.
Thus, as the two $t$-motives $X$ and $N$ generate the same Tannakian
sub-category of $\cT$, the Galois groups $\Gamma_{X}$ and
$\Gamma_{N}$ are isomorphic, in particular $\Gamma_{\Psi}\cong
\Gamma_{N}$. Note that by Theorem \ref{T:DiffGalGrp} any element
$\nu$ in $\Gamma_{N}$ is of the form
\[
\begin{pmatrix}
\gamma &  &  &  \\
& \ddots &  &  \\
&  & \gamma &  \\
\bv_{1} & \cdots & \bv_{n} & 1 \\
\end{pmatrix}
\] for some $\gamma\in \Gamma_{M_{\rho}}$ and some $\bv_1,\ldots,\bv_n\in
\Ga^{r}$. As $M_{\rho}^{n}$ is a sub-$t$-motive of $N$, we have the
short exact sequence of affine algebraic group schemes over $\ek$,
\begin{equation}\label{SESforGammaN}
         1 \to W \to \Gamma_{N} \stackrel{\pi}{\to} \Gamma_{M_{\rho}}
\to 1,
\end{equation}
where the surjective map $\pi: \Gamma_{N}\twoheadrightarrow
\Gamma_{M_{\rho}}$ is the projection map given by $\nu\mapsto \gamma
$ (cf.~\cite[p.22]{CP}). We notice that \eqref{SESforGammaN} gives
rise to an action of any $\gamma\in \Gamma_{M_{\rho}}$ on \[\bv=
\begin{pmatrix}
\Id_r &  &  &  \\
& \ddots &  &  \\
&  & \Id_r &  \\
\bv_{1} & \cdots & \bv_{n} & 1 \\
\end{pmatrix} \in W\]  given by
\[
\begin{pmatrix}
\Id_r &  &  &  \\
& \ddots &  &  \\
&  & \Id_r &  \\
\bv_{1}\gamma^{-1} & \cdots & \bv_{n}\gamma^{-1} & 1 \\
\end{pmatrix}  \]

In what follows, we will show that $W$ can be identified with a
$\Gamma_{M_{\rho}}$-submodule of $\left(M_{\rho}^{n}\right)^{\rB}\simeq
\left(M_{\rho}^{\rB} \right)^{n}$.  Let $\bn\in \Mat_{(rn+1)\times
  1}(N)$ be the $\ok(t)$-basis of $N$ such that $\sigma
\bn=\Phi_{N}\bn$. We write $\bn=(\bn_1,\ldots,\bn_n,x)^{\tr}$, where
$\bn_i\in \Mat_{r\times 1}(N)$. Notice that
$(\bn_1,\ldots,\bn_n)^{\tr}$ is a $\ok(t)$-basis of
$M_{\rho}^{n}$. Recall that the entries of $\Psi_{N}^{-1}\bn$ form a
$\ek$-basis of $N^{\rB}$, and
$\mathbf{u}:=(\Psi_{\rho}^{-1}\bn_{1},\ldots,\Psi_{\rho}^{-1}\bn_{n})^{\tr}$
is a $\ek$-basis of $M_{\rho}^{n}$. Given any $\ek$-algebra $\eR$, we
recall the action of $\Gamma_{M_{\rho}}(\eR)$ on
$\eR\otimes_{\ek}\left(M_{\rho}^{\rB}\right)^{n}$ as follows
(cf. \cite[\S 4.5]{Papanikolas}): for any $\gamma\in
\Gamma_{M_{\rho}}(\eR)$ and any $\bv_i \in \Mat_{1\times r}(\eR)$,
$1\leq i\leq n$, the action of~$\gamma$ on $\left( \bv_{1}, \ldots,
  \bv_{n}\right)\cdot \mathbf{u}\in \eR\otimes_{\ek}\left(
  M_{\rho}^{\rB} \right)^{n}$ is presented as
\[
\left( \bv_{1}, \ldots, \bv_{n}\right)\cdot \mathbf{u} \mapsto
\left( \bv_{1}\gamma^{-1}, \ldots, \bv_{n}\gamma^{-1} \right)\cdot\mathbf{u}.
\]
It follows that when a basis $\mathbf{u}$ of
$\left(M_{\rho}^{\rB}\right)^{n}$ is fixed as above, the action of
$\Gamma_{M_{\rho}}$ on $\left(M_{\rho}^{\rB}\right)^{n}$ is compatible
with the action of $\Gamma_{M_{\rho}}$ on $W$ described in the
previous paragraph.  Regarding $\left(M_{\rho}^{\rB}\right)^{n}$ as a
vector group over $\ek$, it follows that $\Gamma_{N}$ is a subgroup of
the group scheme $\Gamma_{M_{\rho}}\ltimes
\left(M_{\rho}^{\rB}\right)^{n}$ over $\ek$, which satisfies the short
exact sequence
\[
1\rightarrow
\left(M_{\rho}^{\rB}\right)^{n}\rightarrow \Gamma_{M_{\rho}}\ltimes
\left(M_{\rho}^{\rB}\right)^{n} \twoheadrightarrow
\Gamma_{M_{\rho}}\rightarrow 1 .
\]
We also see that $W$ is equal to
the scheme-theoretic intersection $\Gamma_{N}\cap
\left(M_{\rho}^{\rB}\right)^{n}$.

\begin{lemma}\label{L:SmoothG}
The $\ek$-group scheme $W$ is the $\ek$-vector subgroup of $\left(M_{\rho}^{\rB}\right)^{n}$
arising from a $\Gamma_{M_{\rho}}$-submodule.  In particular,
it is $\ek$-smooth.
\end{lemma}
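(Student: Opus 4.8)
We must show that the closed $\ek$-subgroup scheme $W$ of the vector group $V:=(M_\rho^{\rB})^n$ is the vector subgroup attached to a $\Gamma_{M_\rho}$-subrepresentation of $V$; the ``in particular'' clause is then automatic, since the vector group of an $\ek$-vector space is $\ek$-smooth. The first reduction is formal: as $V$ is normal in $\Gamma_{M_\rho}\ltimes V$ and $W\subseteq V$ is commutative, conjugation in $\Gamma_N$ descends to an action of $\Gamma_N/W\cong\Gamma_{M_\rho}$ on $W$, and by the description of this action recalled just above (the action of $\Gamma_{M_\rho}$ on $(M_\rho^{\rB})^n$) it is simply the restriction to $W$ of the linear $\Gamma_{M_\rho}$-action on $V$. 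Hence $W$ is a $\Gamma_{M_\rho}$-stable closed $\ek$-subgroup scheme of $V$, and it remains to prove that the particular such subgroup scheme $W=\Gamma_N\cap V$ is $\ek$-linear, which (being $\Gamma_{M_\rho}$-stable) forces it to be a $\Gamma_{M_\rho}$-submodule.

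Two structural facts frame the argument. By Corollary~\ref{C:CompReducible}, $V$ is a \emph{semisimple} $\Gamma_{M_\rho}$-module; and by Corollary~\ref{C:ResScalars} (equivalently Theorem~\ref{T:MainThm1}), $\Gamma_{M_\rho}=\Cent_{{\GL_r}/\ek}(\eK_\rho)=\Res_{\eK_\rho/\ek}(\GL_{r/s})$ contains the scalar subgroup $\Res_{\eK_\rho/\ek}(\Gm)$, acting on $V$ by scalar multiplication through the $\eK_\rho$-vector space structure of $M_\rho^{\rB}$; in particular the $\ek$-split torus $\Gm\subseteq\Gamma_{M_\rho}$ of honest scalars acts on $V$ by uniform dilation. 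I claim that, \emph{granting that $W$ is $\ek$-smooth}, the lemma follows: $W$ is then also connected (no non-trivial finite étale subgroup of a vector group is stable under uniform dilation, since the dilation orbit of any non-zero point is infinite), so after base change to $\ok$ it is a smooth connected unipotent group killed by the characteristic, hence a vector group $\Ga^d$, on which $\Gm$ acts linearly; a $\Gm$-equivariant closed immersion $W\hookrightarrow V$ into the pure-weight-one module $V$ then forces $W$ to have pure weight one and the immersion to be cut out by additive polynomials homogeneous of weight one, i.e.\ by $\ek$-linear forms (any Frobenius-twisted term would be homogeneous of weight $q^i$ with $i\ge 1$ and could not match weight one). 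Thus $W$ is an $\ek$-subspace of $V$, hence a $\Gamma_{M_\rho}$-submodule. So everything reduces to showing that the scheme-theoretic intersection $W=\Gamma_N\cap V$ is $\ek$-smooth.

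When $\eK_\rho/\ek$ is separable, one base changes to a separable closure of $\ek$ (or to $\ok$): there $\eK_\rho$ splits as a product of fields, $\Res_{\eK_\rho/\ek}(\GL_{r/s})$ becomes a product of copies of $\GL_{r/s}$, and the orthogonal idempotents of $\eK_\rho\otimes_\ek\ok$ decompose $W$ into blocks governed by genuine subtori with distinct weights; combining this with the smoothness of $\Gamma_N$ (Theorem~\ref{T:DiffGalGrp}(b)) and the semisimplicity of $V$ yields the smoothness of $W$. This is the route taken in the earlier version of the paper, under the hypothesis that $K_\rho/k$ is separable. When $\eK_\rho/\ek$ is inseparable this descent collapses — most essentially, $\Res_{\eK_\rho/\ek}(\Gm)$ is then \emph{not} a torus, so the uniform-weight decomposition used above is unavailable after extension of scalars — and the required fact, that $\Gamma_N\cap V$ is nevertheless the vector group of a $\Gamma_{M_\rho}$-submodule of $V$, is exactly what Appendix~\ref{appendix} supplies by working directly over $\ek$. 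That inseparable case is the main obstacle, and is precisely why the appendix is needed.
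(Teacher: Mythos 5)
Your proposal ultimately rests on the same key input as the paper, namely Conrad's Proposition~\ref{glvprop} from Appendix~\ref{appendix}, but it reaches it by a more roundabout route and leaves one step unsupported. The paper's proof involves no separable/inseparable dichotomy and no preliminary reduction: by Corollary~\ref{C:ResScalars}, $\Gamma_{M_{\rho}}$ is the Weil restriction $\mathrm{R}_{\eK_{\rho}/\ek}(\GL(\mathscr{V}'))$ for $\mathscr{V}'=M_{\rho}^{\rB}$ viewed as an $\eK_{\rho}$-vector space, and $V$ is the $\ek$-vector group underlying $V'=\mathscr{V}'^{\oplus n}$ with the standard action, which is exactly Example~\ref{exglv}; since $\Gamma_{N}$ is smooth (Theorem~\ref{T:DiffGalGrp}(b)) and surjects onto $\Gamma_{M_{\rho}}$ by \eqref{SESforGammaN}, Proposition~\ref{glvprop} applies verbatim and yields at once that $W=\Gamma_{N}\cap V$ equals $\mathrm{R}_{\eK_{\rho}/\ek}(W')$ for a $G'$-stable $\eK_{\rho}$-submodule $W'$ of $V'$ --- in particular the vector subgroup of a $\Gamma_{M_{\rho}}$-submodule, smooth, with no hypothesis whatsoever on $\eK_{\rho}/\ek$. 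Relative to this, your preliminary reduction (``smoothness of $W$ forces linearity, via the weight-one $\Gm$ of scalars'') is sound in principle but unnecessary; if you keep it, you should justify the passage from ``smooth $\Gm$-stable subgroup'' to ``cut out by linear forms'' (for instance: $W(\ok)$ is closed under addition and under $\ok^{\times}$-scaling, hence is an $\ok$-subspace, and then a grading plus faithful-flatness argument shows the ideal of $W$ over $\ek$ is generated by its degree-one part).

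The one genuine weak point is your treatment of the separable case: ``combining this with the smoothness of $\Gamma_{N}$ and the semisimplicity of $V$ yields the smoothness of $W$'' is an assertion, not an argument --- smoothness of $\Gamma_{N}$ does not by itself give smoothness of the scheme-theoretic intersection $\Gamma_{N}\cap V$, and the argument the authors actually used in the separable setting was of a different nature, namely surjectivity of the tangent map $d\pi$ onto $\Lie\Gamma_{M_{\rho}}$, as indicated in the Remark following the lemma. This gap is harmless only because the proposition you invoke for the inseparable case in fact applies uniformly; the cleanest repair is to drop both the case division and the reduction, and simply verify the hypotheses of Example~\ref{exglv} and Proposition~\ref{glvprop} as above --- which is precisely the paper's proof.
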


\begin{proof}
  Recall that $\eK_{\rho}:=\End_{\cT}(M_{\rho})$ is naturally embedded
  into $\End(M_{\rho}^{B})$, so we can regard $M_{\rho}^{\rB}$ as
  vector space $\mathscr{V}'$ over $\eK_{\rho}$.
  Let  $G'$ denote the $\eK_{\rho}$-group $\GL(\mathscr{V}')$.  Corollary~\ref{C:ResScalars} says
  that $\Gamma_{M_{\rho}}$ naturally
  coincides with the Weil restriction of scalars $\mathrm{R}_{\eK_{\rho}/ \ek}(G')$
  over $\ek$. Let   $V'$ be the $\eK_{\rho}$-vector space
  $\mathscr{V}'^{\oplus n}$, and
 let $V$ be the vector group over $\ek$ associated to the underlying
 $\ek$-vector space. By Theorem~\ref{T:DiffGalGrp}(b), $\Gamma_{N}$ is
  smooth.  Hence, our situation is a special case of
  Example~\ref{exglv} of
  Appendix~\ref{appendix} (by taking $G=\Gamma_{M_{\rho}}$,
  $\Gamma=\Gamma_{N}$, $k'=\eK_{\rho}$, and $k=\ek$ there), so Proposition~\ref{glvprop} provides the
  desired result.
\end{proof}

\begin{remark}
In the case that $K_\rho$ is separable over $k$, one can prove
Lemma~\ref{L:SmoothG} by showing that the induced tangent map $d\pi$ is
surjective onto the Lie algebra of $\Gamma_{M_{\rho}}$ along the lines of
arguments of the proof of \cite[Prop.~4.1.2]{CP}.
\end{remark}

\begin{theorem}\label{T:MainThm2}
Given $u_{1},\ldots,u_{n}\in \CC_{\infty}$ with $\exp_{\rho}(u_{i})=
\alpha_{i}\in \ok$ for $i=1,\ldots,n$, we let $X_{1},\ldots,X_{n}\in
\Ext_{\cT}^{1}(\one,M_{\rho})$, $N$ and $\Psi$ be defined as above.
Suppose that  $ \omega_{1},\ldots,\omega_{r/s}$,
$u_{1},\ldots,u_{n}$ are linearly independent over $K_\rho$. Then
$\Gamma_{\Psi}$ is an extension of $\Gamma_{M_\rho}$ by a vector
group of dimension $rn$ defined over $\ek$, and so $\dim
\Gamma_{\Psi}=r(r/s+n)$. In particular, by \eqref{E:dimtrdeg}
\[
\cup_{i=1}^{r-1}\cup_{m=1}^{n}\cup_{j=1}^{r/s}
\left\{\omega_{j},F_{\tau^{i}}(\omega_{j}),u_{m},F_{\tau^{i}}(u_{m})
\right\}
\]
is an algebraically independent set over $\ok$.
\end{theorem}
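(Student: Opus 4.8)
The plan is to combine the upper bound from \eqref{E:dimtrdeg} with a matching lower bound for $\dim \Gamma_\Psi = \dim \Gamma_N$, obtained by analyzing the short exact sequence \eqref{SESforGammaN}. First I would invoke Theorem~\ref{T:IndMiExt}: the hypothesis that $\omega_1,\dots,\omega_{r/s}, u_1,\dots,u_n$ are linearly independent over $K_\rho$ is precisely the condition $\dim_{K_\rho}\Span_{K_\rho}(\omega_1,\dots,\omega_r,u_1,\dots,u_n) = r/s + n$ (since $\omega_1,\dots,\omega_{r/s}$ already form a maximal $K_\rho$-linearly independent subset of $\Lambda_\rho$ by Theorem~\ref{T:MainThm1}), so the classes $X_1,\dots,X_n$ are $\eK_\rho$-linearly independent in $\Ext^1_\cT(\one, M_\rho)$. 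The goal then is to show $W = \Gamma_N \cap (M_\rho^{\rB})^n$ is all of $(M_\rho^{\rB})^n$, which has dimension $rn$; combined with the surjection $\pi:\Gamma_N \twoheadrightarrow \Gamma_{M_\rho}$ and $\dim\Gamma_{M_\rho} = r^2/s$ from Theorem~\ref{T:MainThm1}, this gives $\dim\Gamma_\Psi = r^2/s + rn$, matching the upper bound, whence equality holds in \eqref{E:dimtrdeg} and the displayed set is algebraically independent over $\ok$.

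The core step is showing $W = (M_\rho^{\rB})^n$. By Lemma~\ref{L:SmoothG}, $W$ is a $\Gamma_{M_\rho}$-submodule of $(M_\rho^{\rB})^n \cong (M_\rho^{\rB})^{\oplus n}$; since by Corollary~\ref{C:CompReducible} every object of $\Rep{\Gamma_{M_\rho}}{\ek}$ is completely reducible and $M_\rho^{\rB}$ is (via $\eK_\rho \cong \End_\cT(M_\rho)$ acting) irreducible over $\Cent_{\Mat_r(\ek)}(\eK_\rho)$, the submodule $W$ must be of the form $\bigoplus$ of copies of $M_\rho^{\rB}$ sitting inside $(M_\rho^{\rB})^{\oplus n}$ via an $\eK_\rho$-linear inclusion; in other words, $W$ corresponds to a $\eK_\rho$-subspace of $\eK_\rho^{\,n}$, and $\dim_{\eK_\rho} W = n$ iff $W = (M_\rho^{\rB})^n$. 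This is exactly where $\eK_\rho$-linear independence of the $X_i$ enters: I would argue that if $W$ were proper, i.e., contained in the submodule cut out by some nonzero $\eK_\rho$-linear functional $(e_1,\dots,e_n)$ on $\eK_\rho^{\,n}$, then the corresponding $\eK_\rho$-linear combination $e_{1*}X_1 + \dots + e_{n*}X_n$ would split in $\Ext^1_\cT(\one,M_\rho)$ — because splitting of an extension is detected precisely by the corresponding $W$-component collapsing, i.e., the extension of $\one$ by $M_\rho$ having trivial $W$-part. This contradicts Theorem~\ref{T:IndMiExt}. (This is the Hardouin-type argument referenced in the introduction: $\eK_\rho$-linear independence of extension classes forces the unipotent radical $W$ of $\Gamma_N$ to be as large as possible.)

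The main obstacle I anticipate is making precise the dictionary between (i) the $\eK_\rho$-submodule structure of $W \subseteq (M_\rho^{\rB})^n$, (ii) the $\eK_\rho$-span of the classes $[X_1],\dots,[X_n]$ in $\Ext^1_\cT(\one,M_\rho)$, and (iii) the concrete matrix description in terms of $\bg_i$, $E_i$, and the trivialization data. Concretely, one must check that the $\Gamma_{M_\rho}$-equivariant surjection $(M_\rho^{\rB})^n \twoheadrightarrow (M_\rho^{\rB})^n/W$ is dual to the inclusion of the $\eK_\rho$-span of $\{[X_i]\}$ into $\Ext^1_\cT(\one,M_\rho)$, so that $W$ proper $\iff$ some nontrivial $\eK_\rho$-combination of the $X_i$ is trivial. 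The cleanest route is probably to identify $\Ext^1_\cT(\one,M_\rho)$ with $H^1$ of $\Gamma_{M_\rho}$ acting on $M_\rho^{\rB}$ (as in \cite[\S5]{Papanikolas} / Hardouin's work), under which an extension class is trivial iff its image in the relevant $H^1$ vanishes, and the $W$-part of $\Gamma_N$ reads off exactly this class; given the fully faithful functor of Proposition~\ref{P:FunctorMovDrin} and the explicit matrices $\Phi_N$, $\Psi_N$ already written down, this bookkeeping is routine but must be done carefully. Once $W = (M_\rho^{\rB})^n$ is established, $\Gamma_\Psi$ is an extension of $\Gamma_{M_\rho}$ by the vector group $(M_\rho^{\rB})^n$ of dimension $rn$, so $\dim\Gamma_\Psi = r^2/s + rn = r(r/s+n)$; equality in \eqref{E:dimtrdeg} together with Theorem~\ref{T:TrDegGalGrp} and the computation of $\ok(\Psi(\theta))$ then yields the claimed algebraic independence.
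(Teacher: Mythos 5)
Your proposal is correct and follows essentially the same route as the paper: the upper bound \eqref{E:dimtrdeg}, then $\dim W=rn$ via Lemma~\ref{L:SmoothG}, complete reducibility (Corollary~\ref{C:CompReducible}), and the $\eK_\rho$-linear independence of $X_1,\dots,X_n$ from Theorem~\ref{T:IndMiExt}. The only divergence is bookkeeping in the key dictionary step: where you propose an $H^1$-style identification to justify ``$W$ proper $\Rightarrow$ some $e_{1*}X_1+\dots+e_{n*}X_n$ splits,'' the paper makes this precise by realizing $W\cong U^{\rB}$ for a sub-$t$-motive $U\subseteq M_\rho^n$, choosing $\phi=\sum e_i\in\Hom_{\cT}(M_\rho^n,M_\rho)$ with $U\subseteq\Ker\phi$, and using Hardouin's complete-reducibility argument to split $N/U$ and hence its quotient $\phi_*N$ --- exactly the content of your claimed equivalence.
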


\begin{proof}
By \eqref{E:dimtrdeg}, our task is to prove that $\dim
\Gamma_{\Psi}=r(r/s+n)$, which is equivalent to proving that $\dim
W=rn$ by Theorem \ref{T:MainThm1} and \eqref{SESforGammaN}.
As Lemma~\ref{L:SmoothG} implies that
 $W$ is a $\Gamma_{M_{\rho}}$-submodule of
$(M_{\rho}^{n})^{\rB}$,  by the equivalence of the
categories, $\cT_{M_{\rho}}\approx \Rep{\Gamma_{M_{\rho}}}{\ek}$,
there exists a sub-$t$-motive $U$ of $M_{\rho}^{n}$ so that
$W\cong U^{\rB}$. Therefore, to prove $\dim W=rn$ it suffices
to prove that $U^{\rB}=(M_{\rho}^{n})^{\rB}$.

We claim that $N/U$ is split as a direct sum of $M_{\rho}^{n}/U$ and
$\one$.  To prove this, we follow an argument of Hardouin (see
\cite[Lem.~2.3]{Hardouin}).  Since $W\cong U^{\rB}$, $\Gamma_{N}$
acts on $N^{\rB}/ U^{\rB}$ through the quotient $\Gamma_{N}/W\cong
\Gamma_{M_{\rho}}$ via \eqref{SESforGammaN}.  It follows that
$N^{\rB}/ U^{\rB}$ is an extension of $\ek$ by
$(M_{\rho}^{n})^{\rB}/ U^{\rB}$ in the category
$\Rep{\Gamma_{M_{\rho}}}{\ek}$. By the equivalence
$\cT_{M_{\rho}}\approx \Rep{\Gamma_{M_{\rho}}}{\ek}$  and Corollary
\ref{C:CompReducible}, we see that the extension $N/U$ is trivial in
$\Ext_{\cT}^{1}(\one,M_{\rho}^{n}/U)$.

Now suppose on the contrary that $U^{\rB}\subsetneq
(M_{\rho}^{n})^{\rB}$. By Corollary \ref{C:SimpleMrho} we have that
$M_{\rho}^{n}$ is completely reducible in $\cT_{M_\rho}$.  As $U$ is
a proper sub-$t$-motive of $M_{\rho}^{n}$, there exists a
non-trivial morphism $\phi\in \Hom_{\cT}(M_{\rho}^{n},M_{\rho})$ so
that $U\subseteq \Ker \phi$. Moreover, the morphism $\phi$ factors
through the map $M_{\rho}^{n}/U \rightarrow M_{\rho}^{n}/\Ker\phi$:
\[
\xymatrix{
M_{\rho}^{n} \ar[dr]^{\phi} \ar[d] & \\
M_{\rho}^{n}/ U \ar [r]& M_{\rho}^{n}/ \Ker \phi \cong M_{\rho}. }
\]
Since $\phi\in\Hom_{\cT}(M_{\rho}^{n},M_{\rho})$, we can write
$\phi(m_{1},\dots,m_{n})= \sum_{i=1}^{n} e_{i}(m_{i})$ for some
$e_{1}, \dots, e_{n}\in \eK_{\rho}$, not all zero. Then the push-out
$\phi_{*}N = {e_{1}}_{*}X_{1}+ \cdots + {e_{n}}_{*}X_{n}$ is a
quotient of $N/U$. By the claim above, it follows that $\phi_{*}N$
is trivial in $\Ext^{1}_{\cT}(\one,M_{\rho})$. But by Theorem
\ref{T:IndMiExt} this contradicts the $\eK_{\rho}$-linear
independence of $X_{1},\dots,X_{n}$ in
$\Ext^{1}_{\cT}(\one,M_{\rho})$.
\end{proof}

\begin{corollary}\label{C:CorThm2}
Let $\rho$ be a Drinfeld $\eA$-module of rank $r$ defined over
$\ok$. Let $u_{1},\ldots,u_{n}\in \CC_{\infty}$ satisfy
$\exp_{\rho}(u_{i})\in \ok$ for $i=1,\ldots,n$, and suppose that
they are linearly independent over $K_\rho$. Let
$\delta_{1},\ldots,\delta_{r}$ be a basis of $H_{\DR}(\rho)$ defined
over $\ok$.  Then the $rn$ quantities
\[
\cup_{j=1}^{r}\bigl\{F_{\delta_{j}}(u_{1}),\ldots,F_{\delta_{j}}(u_{n})
\bigr\}
\]
are algebraically independent over $\ok$.
\end{corollary}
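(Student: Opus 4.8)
The plan is to deduce Corollary~\ref{C:CorThm2} from Theorem~\ref{T:MainThm2} by a change-of-basis argument on the de Rham group, followed by a short reduction to arrange linear independence that includes a $K_\rho$-basis of the lattice. First I would recall that the particular biderivations $\delta^{(1)}, \tau, \dots, \tau^{r-1}$ used to build $\rP_\rho$ and the matrix $\Psi_\rho$ form one choice of $\ok$-basis of $H_{\DR}(\rho)$, and that $F_{\delta^{(1)}}(u_i) = u_i - \exp_\rho(u_i) = u_i - \alpha_i$, with $\alpha_i \in \ok$. Hence for this particular basis the set $\cup_{j} \{ F_{\delta_j}(u_i) \}$ generates over $\ok$ the same field as $\cup_{i=1}^n \cup_{j=1}^{r-1} \{ u_i, F_{\tau^j}(u_i) \}$ (the ``periods of the first kind'' $F_{\delta^{(1)}}(u_i)$ just shift the $u_i$ by algebraic numbers). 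For a general $\ok$-basis $\delta_1, \dots, \delta_r$ of $H_{\DR}(\rho)$, the transition matrix to the special basis lies in $\GL_r(\ok)$ by the de Rham isomorphism \eqref{E:deRham}, so the two families $\{ F_{\delta_j}(u_i) \}$ and $\{ u_i, F_{\tau^j}(u_i) \}$ generate the same extension of $\ok$ and have the same transcendence degree; therefore it suffices to prove algebraic independence for the special basis.

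Next I would handle the linear-independence hypothesis. Theorem~\ref{T:MainThm2} requires that $\omega_1, \dots, \omega_{r/s}, u_1, \dots, u_n$ be linearly independent over $K_\rho$, where $\omega_1, \dots, \omega_r$ is the chosen $A$-basis of $\Lambda_\rho$; equivalently, that $\dim_{K_\rho} \Span_{K_\rho}(\omega_1, \dots, \omega_r, u_1, \dots, u_n) = r/s + n$ (recall $\dim_{K_\rho}(k \otimes_A \Lambda_\rho) = r/s$ since $[K_\rho : k] = s$, so the $\omega_j$ already span an $r/s$-dimensional $K_\rho$-space, and one may after relabeling take $\omega_1, \dots, \omega_{r/s}$ as a $K_\rho$-basis). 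The corollary only assumes $u_1, \dots, u_n$ are $K_\rho$-linearly independent, so I must upgrade this: since $\End(\rho)$-linear relations between the $u_i$ and the periods $\omega_j$ would, via $\exp_\rho$, force $\exp_\rho(u_i)$-relations or collapse the $u_i$ modulo $\Lambda_\rho$ inappropriately — but more simply, one argues that $\Span_{K_\rho}(\omega_1, \dots, \omega_{r/s})$ is exactly the $K_\rho$-span of all periods, and any $u_i$ lying in it would be a $K_\rho$-combination of periods, hence (applying $\exp_\rho$ and using that $\exp_\rho$ is $A$-linear after scaling by $K_\rho$-denominators) would put some nonzero $A$-multiple of $u_i$ in $\Lambda_\rho$, i.e. $u_i$ itself up to the endomorphism action would be a period; since $\exp_\rho(u_i) \in \ok$ this is allowed but such $u_i$ contribute nothing new to the transcendence degree and can be discarded. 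So after discarding such $u_i$ I may assume the full hypothesis of Theorem~\ref{T:MainThm2} holds.

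With those reductions in place, Theorem~\ref{T:MainThm2} gives that
\[
\cup_{i=1}^{r-1} \cup_{m=1}^{n} \cup_{j=1}^{r/s} \{ \omega_j, F_{\tau^i}(\omega_j), u_m, F_{\tau^i}(u_m) \}
\]
is algebraically independent over $\ok$, a set of cardinality $r(r/s + n)$. The quasi-logarithms I care about are the $rn$ elements $\cup_{i=1}^{n} \cup_{j=1}^{r} \{ F_{\delta_j}(u_i) \}$, which for the special basis are $\cup_{i=1}^n \bigl( \{ u_i - \alpha_i \} \cup \cup_{j=1}^{r-1} \{ F_{\tau^j}(u_i) \} \bigr)$; each of these is one of the coordinates appearing in the algebraically independent set above (or an algebraic shift of one), and the $rn$ of them are pairwise distinct coordinates. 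Algebraic independence of a set is inherited by any subset, and is preserved under translating coordinates by elements of $\ok$, so these $rn$ quasi-logarithms are algebraically independent over $\ok$. Transferring back to an arbitrary $\ok$-basis $\delta_1, \dots, \delta_r$ of $H_{\DR}(\rho)$ via the $\GL_r(\ok)$ transition matrix preserves algebraic independence, completing the proof.

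The main obstacle I anticipate is the bookkeeping in the second paragraph: making precise that ``$K_\rho$-linear independence of the $u_i$ alone'' can be promoted to ``$K_\rho$-linear independence of the $u_i$ together with a $K_\rho$-basis of the lattice'' without loss of generality, i.e., correctly identifying and discarding those $u_i$ whose $K_\rho$-span meets $\Span_{K_\rho}(\Lambda_\rho)$ nontrivially and checking those contribute nothing to the transcendence degree. This requires a careful use of the $A$-linearity of $\exp_\rho$ (or rather its compatibility with the $\End(\rho)$-action) and of the fact that $F_\delta(\omega + u) = F_\delta(\omega) + F_\delta(u)$ for $\omega \in \Lambda_\rho$, so that replacing a ``dependent'' $u_i$ by $u_i$ minus an appropriate $K_\rho$-combination of periods only changes the associated quasi-logarithms by algebraic quantities already accounted for in $\ok(\rP_\rho)$, whose transcendence degree is already known from Theorem~\ref{T:MainThm1}. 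Everything else — the de Rham change of basis and the passage to subsets — is routine.
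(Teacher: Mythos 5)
Your first and third steps (changing the de Rham basis, passing to subsets and $\ok$-translates) are fine and match the paper's use of the linearity of $F_\delta$ in $\delta$ together with \eqref{E:DiffFdelta} (you would also need the paper's preliminary normalization $\kappa_r=1$ via an isomorphism of $\rho$, since Theorem~\ref{T:MainThm2} is proved under that standing assumption, but that is routine). The genuine gap is your second paragraph. The hypothesis of Theorem~\ref{T:MainThm2} can fail not merely because some individual $u_i$ lies in $\Span_{K_\rho}(\Lambda_\rho)$: there can be a $K_\rho$-relation mixing several $u_i$'s with periods, e.g.\ $u_2=u_1+\omega_1$ with neither $u_1$ nor $u_2$ a $K_\rho$-combination of periods, and your ``discard the $u_i$ meeting the period span'' step does not touch this case, so you cannot conclude that ``after discarding such $u_i$ the full hypothesis of Theorem~\ref{T:MainThm2} holds.'' Moreover, even for a $u_i$ that does lie in the period span, discarding it is not harmless: the corollary asserts algebraic independence of all $rn$ quantities, including $F_{\delta_j}(u_i)$ for the discarded index, and those values are (modulo $\ok$) $\ok$-linear combinations of quasi-periods, which are transcendental --- not ``algebraic quantities,'' as your closing paragraph suggests --- so re-inserting them requires a further argument that you never supply; your third paragraph then implicitly assumes every $F_{\delta_j}(u_i)$ is literally a coordinate of the independent set from Theorem~\ref{T:MainThm2}, i.e.\ that $\dim_{K_\rho}\Span_{K_\rho}(\omega_1,\dots,\omega_r,u_1,\dots,u_n)=r/s+n$, which is exactly the situation the corollary is meant to go beyond.

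The paper's proof handles this differently: it sets $V=\Span_{K_\rho}(\omega_1,\dots,\omega_r,u_1,\dots,u_n)$, picks a $K_\rho$-basis $v_1,\dots,v_\ell$ (with $r/s\le\ell\le r/s+n$), observes via \eqref{E:DiffFdelta} that the field generated by all the quasi-logarithms of the $\omega$'s and $u$'s equals $\ok\bigl(\cup_{j}\{F_{\delta_j}(v_1),\dots,F_{\delta_j}(v_\ell)\}\bigr)$, and then applies Theorem~\ref{T:MainThm2} to a suitably chosen basis (a maximal $K_\rho$-independent set of periods extended by a subset of the $u_i$'s), delegating the remaining ``swapping'' to the proof of \cite[Thm.~4.3.3]{CP}. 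The substance of that last step, which is what your proposal is missing, is linear algebra: each $F_{\delta_j}(u_i)$ is, modulo $\ok$, an $\ok$-linear form in the $r\ell$ algebraically independent quasi-logarithms of the basis, with coefficients coming from the semilinear action of $\End(\rho)$ on quasi-logarithms (the $K_\rho\otimes_k\ok$-module structure on $H_{\DR}(\rho)$), and one must check that these $rn$ affine-linear expressions have $\ok$-linearly independent linear parts; this is precisely where the $K_\rho$-linear independence of $u_1,\dots,u_n$ among themselves is used. Without that step (or an equivalent), the reduction in your second paragraph does not prove the corollary.
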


\begin{proof}
Every Drinfeld $\eA$-module defined over $\ok$ is isomorphic over
$\ok$ to one for which the leading coefficient of $\rho_t$ is $1$.
It is a routine matter to check that the desired result is invariant
under isomorphism, so we will assume without loss of generality that
this leading coefficient is $1$.  Define the $K_{\rho}$-vector
space,
$V:=\Span_{K_\rho}(\omega_{1},\ldots,\omega_{r},u_{1},\ldots,u_{n})$,
and let $\{v_{1},\ldots,v_{\ell}\}$ be a $K_{\rho}$-basis of $V$.
Certainly $r/s \leq \ell \leq r/s+n$.  Since quasi-periodic
functions $F_\delta(z)$ are linear in $\delta$, using
\eqref{E:DiffFdelta} we have
\[
\ok\bigl(\cup_{i=1}^{r-1}\cup_{m=1}^{n}\cup_{j=1}^{r} \{\omega_{j},
F_{\tau^{i}}(\omega_{j}),u_{m},F_{\tau^{i}}(u_{m}) \}  \bigr)
=\ok\bigl( \cup_{j=1}^{r} \{
F_{\delta_{j}}(v_{1}),\ldots,F_{\delta_{j}}(v_{\ell})\} \bigr).
\]
By swapping out basis elements of $V$ as necessary (see the proof of
\cite[Thm.~4.3.3]{CP}), the result follows from
Theorem~\ref{T:MainThm2}.
\end{proof}

\subsection{The case of general $\eA$}\label{sub:generalA}
In this section, we prove Theorems~\ref{T:Thm1Introd},
\ref{T:Thm2Introd}, and~\ref{T:Thm3Introd}.  Essentially they follow
from Theorem~\ref{T:MainThm1} and Corollary~\ref{C:CorThm2} when we
consider $\rho$ to be a Drinfeld $\FF_q[t]$-module with complex
multiplication.

We resume the notation from the introduction.  However, in order to
separate the roles of ``$\eA$ as operators'' from ``$A$ as
scalars,'' we let $\eA$ be the ring of functions on $\eX$ that are
regular away from $\infty$ with fraction field $\ek$, and we let $A$
be a copy of $\eA$ with fraction field $k$ that serve as scalars.
The fields $k_\infty$ and $\CC_\infty$ are then extensions of $k$.
Thus we follow the ``two $t$'s'' convention of \cite[\S 5.4]{Goss}.

Let $\rho$ be a rank $r$ Drinfeld $\eA$-module defined over $\ok$.
Let $\eR$ be the endomorphism ring of $\rho$, considered to be an
extension of $\eA$, and let $\eK_\rho$ be the fraction field of
$\eR$.  We fix a non-constant element $t \in \eA$, and consider
$\rho$ to be a Drinfeld $\FF_q[t]$-module with complex
multiplication by $\eR$ and defined over $\ok$. Note that the
exponential function $\exp_\rho(z)$ and its period lattice remain unchanged when switching from $\eA$ to $\FF_q[t]$.

As for scalars, we let $\theta \in \ok$ be chosen so that the
extension $K_\rho/\FF_q(\theta)$ is canonically isomorphic to
$\eK_\rho/\FF_q(t)$, where as usual $K_\rho \subseteq \ok$ is the
fraction field of $\End(\rho)$.  In this way $\ok =
\overline{\FF_q(\theta)}$ and $\overline{k_\infty} =
\overline{\laurent{\FF_q}{1/\theta}}$.

\begin{proof}[Proof of Theorem~\ref{T:Thm1Introd}]  Since $u_1, \dots, u_n$ are assumed to
be linearly independent over $K_\rho$, Corollary~\ref{C:CorThm2}
dictates that, for any biderivation $\delta : \FF_q[t] \to
\ok[\tau]\tau$, $
  F_\delta(u_1), \dots, F_\delta(u_n)
$ are algebraically independent over $\ok$.  In particular, when we
take $\delta = \delta^{(1)}$, we know $F_{\delta^{(1)}}(z) = z -
\exp_{\rho}(z)$, and since $\exp_{\rho}(u_i) \in \ok$ for all $i$,
the theorem is proved.
\end{proof}

For the relevant background on the de Rham theory of Drinfeld
$\eA$-modules, we refer the reader to \cite{Gekeler89,Yu90}. In
order to distinguish the roles of $\rho$ over different base rings,
we denote by $D(\rho,\FF_q[t])$ (resp.\ $D(\rho,\eA)$) and
$H_{\DR}(\rho,\FF_q[t])$ (resp.\ $H_{\DR}(\rho,\eA)$), the spaces in
\S\ref{sub:PeriodsQuasiP} when we regard $\rho$ as a Drinfeld
$\FF_q[t]$-module (resp.\ Drinfeld $\eA$-module).  By the de Rham
isomorphism \eqref{E:deRhamIsomIntro} the restriction map $\delta
\mapsto \delta|_{\FF_q[t]} : D(\rho,\eA) \to D(\rho,\FF_q[t])$
induces an injection,
\[
  H_{\DR}(\rho,\eA) \hookrightarrow H_{\DR}(\rho,\FF_q[t]).
\]
Moreover, it is straightforward to check that $F_{\delta}(z) =
F_{\delta|_{\FF_q[t]}}(z)$ for all $\delta \in D(\rho,\eA)$.

\begin{proof}[Proofs of Theorems~\ref{T:Thm2Introd} and~\ref{T:Thm3Introd}]
Let $s: = [\eK_\rho:\ek]$,  $\ell := [\ek:\FF_q(t)]$.  Then $\rho$
is a Drinfeld $\FF_q[t]$-module of rank $r\ell$.  Selecting a basis
$\{ \delta_1, \dots, \delta_r\}$ for $H_{\DR}(\rho,\eA)$ defined
over $\ok$, we extend it to a basis $\{ \eta_1 :=
\delta_1|_{\FF_q[t]}, \dots, \eta_r := \delta_r|_{\FF_q[t]}, \eta_{r+1} \dots,
\eta_{r\ell} \}$ of $H_{\DR}(\rho,\FF_q[t])$ also defined over
$\ok$.  We first prove Theorem~\ref{T:Thm3Introd}, by applying Corollary~\ref{C:CorThm2} to $\rho$ as a Drinfeld $\FF_q[t]$-module.  Suppose $u_1, \dots, u_n \in
\CC_\infty$ satisfy $\exp_\rho(u_i) \in \ok$ for each $i=1, \dots,
n$ and are linearly independent over $K_\rho$.  Then the set
\[
  \cup_{j=1}^{r\ell} \bigl\{ F_{\eta_j}(u_1), \dots, F_{\eta_j}(u_n)
\bigr\}
\]
is algebraically independent over $\ok$, whence so is the
subset $\cup_{i=1}^{n} \cup_{j=1}^r \{ F_{\delta_j}(u_i)\}$.

By these same considerations, Theorem~\ref{T:Thm2Introd} is now a special case of Theorem~\ref{T:Thm3Introd}.  If we select periods $\omega_1, \dots, \omega_{r/s} \in
\Lambda_\rho$ that are linearly independent over $K_\rho$, the $r^2/s$ quantities
\[
  \cup_{j=1}^{r} \bigl\{ F_{\eta_j}(\omega_1), \dots,
F_{\eta_j}(\omega_{r/s}) \bigr\}
\]
are algebraically independent over $\ok$, and so $\trdeg_{\ok} \ok(\rP_\rho) = r^2/s$.
\end{proof}

\appendix
\section{Subgroups of a semidirect product\\ by Brian Conrad} \label{appendix}

\subsection{Main result}\label{intro}

Let $k$ be a field, $k'$ a nonzero finite reduced $k$-algebra
(i.e., $k' = \prod k'_i$ for finite extension fields $k'_i/k$),
and $G'$ a reductive group over ${\rm{Spec}}(k')$
with connected fibers.  We allow for the possibility that
$k'$ is not $k$-\'etale (i.e., some $k'_i$ is not separable over $k$).
Let $V$ be a commutative smooth connected unipotent $k$-group
equipped with a left action by the Weil restriction $G = {\rm{R}}_{k'/k}(G')$.
By \cite[Prop.\,A.5.2(4), Prop.\,A.5.9]{pred}, the affine finite type $k$-group scheme $G$
is smooth and connected.
Beware that if $k'$ is not $k$-\'etale then
$G$ is {\em not} reductive when
$G' \rightarrow {\rm{Spec}}(k')$ has a nontrivial fiber over
some point in the non-\'etale locus of $k'$ over $k$ \cite[Ex.\,1.6.1]{pred}.

For a maximal $k$-torus $T$
in $G$, the scheme-theoretic fixed locus $V^T$ is smooth and connected (since
the centralizer of $T$ in $G \ltimes V$ is smooth and connected yet equals $Z_G(T) \ltimes V^T$).  We will be interested in cases where
$V^T = 0$ for some (equivalently, all)~$T$.

\begin{remark}
The reason we do not require $k'$ to be a field in general is that in proofs
it is useful to extend the ground field $k$ to a separable closure $k_s$,  and
$k'_s := k' \otimes_k k_s$ is typically not a field.  The $k_s$-algebra $k'_s$ arises because
for an affine $k'$-scheme $X'$ of finite type we have
${\rm{R}}_{k'/k}(X')_{k_s} = {\rm{R}}_{k'_s/k_s}(X'_{k'_s})$.
\end{remark}

Here is the situation that is of most interest to us.

\begin{example}\label{exglv}
Suppose $G'$ is
equipped with a linear representation $\rho'$ on
a finitely generated $k'$-module $V'$ such that ${V'}^{Z'} = 0$ for some central torus $Z'$ in $G'$.
Also assume that the action of the Lie algebra $\mathfrak{z}'$ on $V'$ satisfies
${V'}^{\mathfrak{z}'} = 0$
(equivalently, over every factor field of $k'_s = k' \otimes_k k_s$, each weight
for the $Z'$-action on $V'$ is not divisible by ${\rm{char}}(k)$
in the geometric character lattice of the corresponding fiber of $Z'$). For example,
we could take $G' = {\rm{GL}}(\mathscr{V}')$ for a finitely generated
$k'$-module $\mathscr{V}'$ and $\rho'$ to be the standard representation of $G'$ on the
direct sum $V' = {\mathscr{V}'}^{\oplus n}$ for any $n  > 0$.

Let $V$ be the vector group over $k$ underlying $V'$,
and equip it with its natural left action by $G := {\rm{R}}_{k'/k}(G')$ (acting via Weil restriction
of $\rho'$).  The center of
$G$ contains ${\rm{R}}_{k'/k}(Z')$, so the maximal $k$-torus $Z$
in ${\rm{R}}_{k'/k}(Z')$ is contained in every maximal $k$-torus $T$ of $G$.
It is easy to check
that the weights for the action of $Z_{k_s}$ on $V_{k_s}$ are all nontrivial,
so $V^Z = 0$ and hence $V^T = 0$ for all $T$.
\end{example}

The main result of this appendix is:

\begin{proposition}\label{glvprop} With notation and hypotheses as in Example {\rm{\ref{exglv}}}, let
$\Gamma \subset G \ltimes V$ be a smooth closed
$k$-subgroup such that $\Gamma \rightarrow G$ is surjective.
The scheme-theoretic intersection $\Gamma \cap V$ is equal to $W := {\rm{R}}_{k'/k}(W')$
for a unique $k'$-submodule $W'$ of $V'$ that is moreover
$G'$-stable, and there exists a unique $v \in (V/W)(k) = V'/W'$
such that $\Gamma$ is the preimage under
$G \ltimes V \twoheadrightarrow G \ltimes (V/W)$ of the $v$-conjugate of $G$ in $G \ltimes (V/W)$.

In particular, $\Gamma \cap V$ is smooth and $\Gamma$
is connected, and if $k'$ is a field with $G'$ acting irreducibly on $V'$
over $k'$ then either
$\Gamma = G \ltimes V$ or $\Gamma$ is the $v$-conjugate of $G$ for a unique $v \in V(k)$.
\end{proposition}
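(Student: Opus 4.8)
The plan is to reduce the whole statement to the single assertion that $W := \Gamma \cap V$ is $k$-smooth, and then to extract the structural description by soft arguments. First I would observe that $W$ is normal in $\Gamma$, and since $V$ is commutative and normal in $G \ltimes V$ the conjugation action of $\Gamma$ on $W \subseteq V$ factors through the surjection $\Gamma \twoheadrightarrow G$; hence $W$ is $G$-stable. Once $W$ is known to be smooth, the quotient $\Gamma/W$ is a smooth $k$-group, it maps to $G$ with kernel $(\Gamma \cap V)/W = 0$, and the induced map $\Gamma/W \to G$ is surjective (the composite $\Gamma \to \Gamma/W \to G$ is the given surjection), so $\Gamma/W \to G$ is an isomorphism of smooth $k$-groups.

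Granting smoothness, here is how I would finish. The smooth connected $G$-stable subgroup $W$ of the vector group $V$ is acted on by the central torus $Z$ (the maximal $k$-torus in $\mathrm{R}_{k'/k}(Z')$) with only nonzero weights, since $V^Z = 0$; a torus acting on a smooth connected unipotent group with trivial fixed subgroup forces that group to be a vector group equivariantly, so $W$ is a sub-vector-group of $V$, and stability under the scaling by $\mathrm{R}_{k'/k}(Z')$ together with $G$-stability and the Weil-restriction adjunction identify $W$ with $\mathrm{R}_{k'/k}(W')$ for a unique $G'$-stable $k'$-submodule $W' \subseteq V'$. The isomorphism $\Gamma/W \iso G$ now supplies a section of the projection $G \ltimes (V/W) \twoheadrightarrow G$, equivalently a class in $Z^1(G, V/W)$; this class is a coboundary because $H^1(G, V/W) = 0$: the central torus $Z$ grades $\mathrm{Rep}(G)$ by its weights, the trivial object lies in weight $0$, and $(V/W)^Z \subseteq V^Z = 0$, so every $H^i(G, V/W)$ vanishes. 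Hence the section is $(V/W)(k)$-conjugate to the standard inclusion of $G$, and $\Gamma$ is the preimage of the $v$-conjugate of $G$ for this $v$; uniqueness of $v$ holds since two choices differ by an element of $(V/W)^G(k) \subseteq (V/W)^Z(k) = 0$. Connectedness of $\Gamma$ is then automatic, as it is an extension of the connected $G$ by the connected $W$, and when $k'$ is a field with $\rho'$ irreducible we have $W' \in \{0, V'\}$, giving the two stated alternatives.

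The hard part is the smoothness of $W = \Gamma \cap V$, and this is where the hypothesis $V'^{\mathfrak z'} = 0$ must enter. Pure dimension and Lie-algebra counting does not suffice — Frobenius-kernel phenomena show that a smooth group can surject onto a smooth group with non-smooth kernel — so the argument has to engage the positive-characteristic structure directly. The strategy I would follow is: pass to $k_s$ (harmless, since smoothness is unaffected by separable extension), suppose $W$ is non-smooth, and replace $\Gamma, V$ by $\Gamma/W^{\mathrm{sm}}, V/W^{\mathrm{sm}}$, where $W^{\mathrm{sm}}$ is the (automatically $G$-stable) maximal smooth $k$-subgroup of $W$; this reduces to the case where $W \neq 0$ has trivial maximal smooth subgroup while still sitting inside a vector group on which a central element of $\mathrm{Lie}(G)$ acts invertibly — this invertibility being exactly the translation of $V'^{\mathfrak z'} = 0$, i.e. that no weight of $Z'$ on $V'$ is divisible by $p$. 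Using the Frobenius isogeny of the ambient vector group together with the vanishing of $H^*(G, -)$ on $V$ and on its Frobenius twist, one forces the extension $1 \to W \to \Gamma \to G \to 1$ to acquire a complement, contradicting $W \neq 0$. I expect the genuine technical obstacle to be carrying out this infinitesimal bookkeeping when $k'/k$ is inseparable, where the maximal-smooth-subgroup formalism and the vector-group quotients interact delicately with Weil restriction; this is precisely the work that the body of the appendix is organized to handle.
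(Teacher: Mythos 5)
Your reduction ``everything follows softly once $W=\Gamma\cap V$ is smooth'' misplaces the actual difficulty, and the step where you pass from smoothness to the identification $W={\rm{R}}_{k'/k}(W')$ is a genuine gap. Stability under ${\rm{R}}_{k'/k}(Z')$ and $G$, plus the Weil-restriction adjunction, do \emph{not} identify a smooth connected subgroup of $V$ with the Weil restriction of a $k'$-submodule: Remark~\ref{remprop} of the appendix exhibits $k'/k$ purely inseparable of degree $p$, $V'=k'$ with $Z'={\rm{GL}}_1$ acting by $t.x=t^px$, and $W=\mathbf{G}_{\rm{a}}\subset{\rm{R}}_{k'/k}(\mathbf{G}_{\rm{a}})$, which is smooth and $G$-stable (so $\Gamma=G\ltimes W$ has smooth $\Gamma\cap V$) yet is not of the form ${\rm{R}}_{k'/k}(W')$. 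So the hypothesis ${V'}^{\mathfrak{z}'}=0$ is needed precisely at this structural step, not (as you assert) only for smoothness, and no argument that never invokes it can work. The Tits-type fact you quote (a torus acting with no nonzero fixed points on a smooth connected unipotent group makes it equivariantly a vector group) only gives \emph{some} linear structure; when $k'/k$ is inseparable, $k'$-linearity is much stronger than being a sub-vector group, and establishing it is the content of Lemma~\ref{w}: one reduces to a single factor field and then to single $Z'$-weight spaces via torus centralizers, uses that $p\nmid m$ makes the $m$th-power map on ${k'}^\times$ surjective to see the relevant groups of $k$-points are $k'$-subspaces, and then runs a dynamic argument with the functor $U_G(\mu)$ of \cite[2.1]{pred} to show $H$ actually contains the Weil restriction of each weight-space projection (ruling out graph-like subgroups). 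None of this is soft, and your proposal skips all of it.

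Your other hard step, smoothness of $\Gamma\cap V$, is also not carried out: the plan of killing the maximal smooth subgroup and then ``forcing the extension $1\to W\to\Gamma\to G\to 1$ to acquire a complement'' has no mechanism behind it — extensions of $G$ by an infinitesimal commutative group scheme need not admit scheme-theoretic sections, so Hochschild $H^2$ with linear coefficients does not classify them, and the vanishing of $H^{\ast}(G,V)$ does not directly apply. The appendix sidesteps this issue entirely: it never attacks smoothness of the scheme-theoretic intersection head-on, but instead works with the Zariski closure $V_0$ of $\Gamma(k)\cap V(k)$ (smooth by construction over $k=k_s$), proves the structure result for $V_0$ via Lemma~\ref{w}, uses Grothendieck's conjugacy of maximal tori plus $V^T=0$ and a Cartan-subgroup dimension count to put ${\rm{R}}_{k'/k}(T')$ inside a conjugate of $\Gamma$, shows by an explicit commutator computation with $Z(k)$ that the image of $\Gamma$ in $G\ltimes(V/V_0)$ is $G$, and only then deduces that $\Gamma\cap V$ is smooth (as a direct factor of the smooth $\Gamma$) and equals $V_0$. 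For what it is worth, your final cohomological step is sound and even a little cleaner than the paper's hands-on computation: once $W=\Gamma\cap V$ is known to be smooth and equal to ${\rm{R}}_{k'/k}(W')$, the section $G\simeq\Gamma/W\hookrightarrow G\ltimes(V/W)$ is a cocycle, and $H^1(G,V/W)=0$ by the Lyndon--Hochschild--Serre sequence for the central torus $Z$ (linearly reductive, with $(V/W)^Z=0$), giving existence and uniqueness of $v$. But as it stands the proposal assumes away both pillars the proof rests on.
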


\begin{remark}\label{remprop} The description of $W$ in terms of a $k'$-submodule $W'$  of $V'$
relies crucially on the hypothesis that ${V'}^{\mathfrak{z}'} = 0$ in Example \ref{exglv}.
For example, if ${\rm{char}}(k) = p > 0$ and
$k'/k$ is a purely inseparable extension of degree $p$ then consider
$V' = k'$ equipped with the action of $Z' = G' = {\rm{GL}}_1$ via $t.x = t^p x$.
Note that ${V'}^{\mathfrak{z}'} = V'$
even though ${V'}^{Z'} = 0$,
and the canonical $k$-subgroup $W = \mathbf{G}_{\rm{a}}$ in $V = {\rm{R}}_{k'/k}(\mathbf{G}_{\rm{a}})$
is $G$-stable since ${k'}^p \subseteq k$.  Clearly $\Gamma := G \ltimes W$ satisfies
$\Gamma \cap V = W$ scheme-theoretically, so $\Gamma$ violates the conclusion of
Proposition \ref{glvprop} concerning the structure of $W$.
\end{remark}

\subsection{Proof of Proposition \ref{glvprop}}\label{redsec}

In view of the uniqueness assertion for $v$ in Proposition \ref{glvprop},
by Galois descent we may and do assume
$k = k_s$.  The action of $G \ltimes V$ on the normal $k$-subgroup $V$
factors through the natural action of the quotient $G$ since $V$ is commutative.
Since $\Gamma \rightarrow G$ is surjective and $\Gamma(k)$ is Zariski-dense
in $\Gamma$ (as $k = k_s$ and $\Gamma$ is smooth), it follows
that $\Gamma(k)$ has Zariski-dense image in $G$.  Thus,
a smooth closed $k$-subgroup of $V$ is $G$-stable provided
that it is normalized by $\Gamma(k)$ inside of $G \ltimes V$.
In particular, the Zariski closure $V_0$ of $\Gamma(k) \cap V(k)$ in $V$ is
a smooth closed $k$-subgroup of $V$ that is
stable under the $G$-action on $V$.
If $\Gamma \cap V$ is going to be smooth then it must necessarily equal $V_0$,
so this motivates our work with $V_0$ in what follows.
Much later in the argument we will prove that $\Gamma \cap V = V_0$.

Any surjection between smooth connected affine
$k$-groups that is equivariant for actions by a torus
restricts to a surjection between centralizers for the torus actions,
so $V^Z \rightarrow (V/V_0)^Z$ is surjective.
Thus, $(V/V_0)^Z = 0$ since we assume $V^Z = 0$.
Before we address the structure of
$V_0$, we first verify the following lemma that amounts to the uniqueness for $v$
once we know that $\Gamma \cap V$ is smooth.

\begin{lemma} There is at most one $v \in (V/V_0)(k)$ such that
$\Gamma/V_0$ is the $v$-conjugate of $G$ inside of $G \ltimes (V/V_0)$.
\end{lemma}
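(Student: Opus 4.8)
The plan is to deduce the uniqueness of $v$ directly from the vanishing $(V/V_0)^Z=0$ recorded just above the lemma. Write $\overline V := V/V_0$, a commutative smooth connected unipotent $k$-group; since $V_0$ is $G$-stable and contained in the commutative normal subgroup $V$, it is normal in $G\ltimes V$, so the quotient $G\ltimes\overline V=(G\ltimes V)/V_0$ is defined and $\Gamma/V_0$ is a closed $k$-subgroup of it. First I would record the elementary conjugation formula in a semidirect product: for $v\in\overline V(k)$ and $g\in G$, the conjugate $v g v^{-1}$ has $G$-component $g$ and $\overline V$-component $v-g\cdot v$, so the $v$-conjugate of $G$ is the graph of the crossed homomorphism $g\mapsto v-g\cdot v$.

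Next, given $v_1,v_2\in\overline V(k)$ that both realize $\Gamma/V_0$ as a conjugate of $G$, I would compare these two graphs: equality, read off in the $\overline V$-component over each $g$, forces $v_1-g\cdot v_1=v_2-g\cdot v_2$, i.e.\ $g\cdot w=w$ for all $g\in G(k)$, where $w:=v_1-v_2\in\overline V(k)$. Thus the scheme-theoretic stabilizer of $w$ in $G$, a closed $k$-subgroup of $G$, contains $G(k)$; since $k=k_s$ and $G$ is smooth and connected, $G(k)$ is Zariski dense in $G$, and hence this stabilizer is all of $G$, i.e.\ $w\in\overline V^G(k)$. Finally, because the maximal $k$-torus $Z$ of $\mathrm{R}_{k'/k}(Z')$ lies in $G$ we have $\overline V^G\subseteq\overline V^Z=(V/V_0)^Z$, and the latter is $0$ by the already-noted surjectivity of $V^Z\to(V/V_0)^Z$ and the hypothesis $V^Z=0$. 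Therefore $w=0$, that is $v_1=v_2$.

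The only delicate point will be the passage from ``$w$ is fixed by every rational point of $G$'' to ``$w$ is fixed by $G$ as a $k$-group scheme''; this is exactly where the standing reduction to $k=k_s$ and the smoothness (and connectedness) of $G$ enter, via Zariski density of $G(k)$. Everything else is formal bookkeeping inside the semidirect product together with the one-line input $(V/V_0)^Z=0$, so I do not anticipate a genuine obstacle here.
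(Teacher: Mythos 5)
Your argument is correct and is essentially the paper's own proof: both identify the $v$-conjugate of $G$ with the graph of the crossed homomorphism attached to $v$, so that equality of two conjugates forces $w=v_1-v_2$ to be a $G$-fixed point of $V/V_0$, which vanishes because $(V/V_0)^Z=0$. Your extra step passing from $G(k)$-invariance to $G$-invariance via Zariski density of $G(k)$ (legitimate, since $k=k_s$ and $G$ is smooth at this stage) is just a minor elaboration of the paper's reduction to $(V/V_0)(k)^G=0$.
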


\begin{proof}
For any $k$-algebra $A$ and $A$-valued points $g, h$ of $G$
and $v, w$ of $V/V_0$, we have
\begin{equation}\label{conj}
(g,v)(h,w)(g,v)^{-1} = (gh, h^{-1}v+w)(g^{-1},-gv) = (ghg^{-1}, gh^{-1}v - gv + gw).
\end{equation}
Setting $g = 1$, for any $v \in (V/V_0)(k)$ we see that the $v$-conjugate of
$(h,w)$ is $(h, h^{-1}v - v + w)$.  Thus, the $v$-conjugate of $G$ in $G \ltimes (V/V_0)$
is the graph of the map $G \rightarrow V/V_0$ defined by
$h \mapsto h^{-1}v - v$. Uniqueness of $v$ reduces to the property
$(V/V_0)(k)^G = 0$, and this holds because $(V/V_0)^Z = 0$.
\end{proof}

Suppose we could show that the image $\overline{\Gamma}$ of $\Gamma$
in $G \ltimes (V/V_0)$ contains a $(V/V_0)(k)$-conjugate of $G$.
Since $V(k) \rightarrow (V/V_0)(k)$ is surjective (as $k = k_s$ and $V_0$ is smooth),
such a containment would bring us to the situation (after a $V(k)$-conjugation on $\Gamma$)
that $\Gamma = G \ltimes (\Gamma \cap V)$,
which forces the scheme-theoretic intersection $\Gamma \cap V$
to be smooth (as a direct factor of a smooth $k$-scheme is smooth)
and therefore equal to $V_0$,
so we would be done.  Our problem is therefore
reduced to establishing that $V_0 = {\rm{R}}_{k'/k}(W')$
for a $G'$-stable $k'$-submodule $W'$ in $V'$ (such a $W'$ is clearly unique
as a $k'$-submodule, even without reference to the $G'$-stability condition)
and that $\overline{\Gamma}$ contains a $(V/V_0)(k)$-conjugate
of $G$ (or equivalently that
some $V(k)$-conjugate of $\Gamma$ in $G \ltimes V$ contains
$G$).

Let $T \subseteq \Gamma$ be a maximal $k$-torus, so its image in $G$ is a maximal
$k$-torus (as $\Gamma \rightarrow G$ is surjective).  The maximal tori in
$G \ltimes V$ are conjugates of maximal tori of $G$
since $V$ is unipotent, so by dimension reasons it
follows that $T$ is maximal as a $k$-torus of $G \ltimes V$.
It is harmless to replace $\Gamma$ with a $(G \ltimes V)(k)$-conjugate
since we aim to show that some $V(k)$-conjugate of $\Gamma$ contains $G$,
and by a result of Grothendieck
all maximal $k$-tori in $G \ltimes V$ are $k$-rationally conjugate since $k = k_s$
(see \cite[Prop.\,A.2.10]{pred} for a self-contained elementary proof).
 Thus, we may arrange that $T$ coincides with
any desired maximal $k$-torus of $G = {\rm{R}}_{k'/k}(G')$ inside of
$G \ltimes V$.  Fix such a $T$.  Note that now $T$ contains $Z$,
as all maximal $k$-tori in $G$ must contain any central $k$-torus (such as $Z$).

The quotient map $\Gamma \twoheadrightarrow G$ must carry
the Cartan $k$-subgroup $Z_{\Gamma}(T)$ onto $Z_G(T)$.
By \cite[Prop.\,A.5.15]{pred},
there is a unique maximal $k'$-torus $T'$ in $G'$ such that $T \subseteq
{\rm{R}}_{k'/k}(T')$, and moreover $Z_G(T) = Z_G({\rm{R}}_{k'/k}(T')) = {\rm{R}}_{k'/k}(T')$
(the final equality by comparison of $k$-points, as $Z_G(T)$ is smooth).
Thus, $\dim Z_{\Gamma}(T) \ge \dim {\rm{R}}_{k'/k}(T')$.  But
$$Z_{\Gamma}(T) \subseteq Z_{G \ltimes V}(T) = Z_G(T)$$
since $V^{T} = 0$ (as $V^Z = 0$ and $Z \subseteq T$).  Hence, $Z_{\Gamma}(T) = {\rm{R}}_{k'/k}(T')$ inside of
$G \ltimes V$.    In particular, ${\rm{R}}_{k'/k}(T') \subseteq \Gamma$, so
${\rm{R}}_{k'/k}(Z') \subseteq \Gamma$.  Hence,
the $k$-subgroup $V_0$ in $V$ is an ${\rm{R}}_{k'/k}(Z')$-stable closed $k$-subgroup
of $V$.  The structure of $V_0$ is therefore determined by:

\begin{lemma}\label{w}
Let $k$ be a field,
$k'$ a nonzero finite reduced $k$-algebra,
and $V'$ a finitely generated $k'$-module equipped
with a linear action by a $k'$-torus $Z'$ such that
the action of the Lie algebra $\mathfrak{z}' = {\rm{Lie}}(Z')$ on $V'$ satisfies
${V'}^{\mathfrak{z}'} = 0$.
Any ${\rm{R}}_{k'/k}(Z')$-stable smooth closed $k$-subgroup $H$ of
$V := {\rm{R}}_{k'/k}(V')$ has the form ${\rm{R}}_{k'/k}(W')$ for a unique $k'$-submodule $W'$ of $V'$.
\end{lemma}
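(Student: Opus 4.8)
The plan is to proceed in two stages: first to show that $H$ is a $k$-linear subspace of the vector group $V$, and then that this subspace is $k'$-linear, i.e.\ of the form ${\rm{R}}_{k'/k}(W')$. Whether $H$ is a $k$-subspace, whether that subspace is stable under the $k'$-action, and the uniqueness of the resulting $k'$-submodule may all be checked after the faithfully flat base change $k \to \overline{k}$, so I may and do work over $\overline{k}$ (assume ${\rm{char}}(k) = p > 0$; the characteristic-zero case is immediate, as $Z'$ is then linearly reductive and $V'$ has no zero weight). Here ${\rm{R}}_{k'/k}(V')_{\overline{k}} = {\rm{R}}_{(k'\otimes_k\overline{k})/\overline{k}}(V'_{\overline{k}})$, the Artinian $\overline{k}$-algebra $k'\otimes_k\overline{k}$ is a finite product of Henselian local rings with residue field $\overline{k}$, so $Z'_{\overline{k}}$ is a split torus over $k'\otimes_k\overline{k}$, giving a weight decomposition $V'_{\overline{k}} = \bigoplus_\chi V'_{\overline{k},\chi}$ into $(k'\otimes_k\overline{k})$-submodules. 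Let $S$ be the maximal torus of the smooth connected $\overline{k}$-group ${\rm{R}}_{(k'\otimes_k\overline{k})/\overline{k}}(Z'_{\overline{k}})$; then $H_{\overline{k}}$, being stable under that whole group, is in particular $S$-stable, and $S$ acts on $V_{\overline{k}} = \bigoplus_\chi V'_{\overline{k},\chi}$ with weight $\chi$ on the $\chi$-th summand. As recorded in Example~\ref{exglv}, the hypothesis ${V'}^{\mathfrak{z}'} = 0$ says exactly that every occurring weight $\chi$ (i.e.\ with $V'_{\overline{k},\chi} \neq 0$) is not divisible by $p$ in the character lattice; in particular no occurring $\chi$ is trivial.

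For the first stage I would invoke the structure theory of smooth unipotent groups with a split-torus action: if a split torus $S$ acts linearly on a vector group over a field so that none of the occurring weights lies in $pX^*(S)$, then every smooth closed subgroup is a sub-vector-group. One route: such a vector group has no nontrivial $S$-equivariant Witt-type ($W_2$ and higher) subquotients precisely because Frobenius twisting multiplies weights by $p$, so $V_{\overline{k}}/H_{\overline{k}}$ is again a vector group; writing $H_{\overline{k}}$ as the kernel of an $S$-equivariant homomorphism to a vector group and decomposing that homomorphism into its additive-polynomial components by Frobenius degree, a component of degree $p^l$ with $l \geq 1$ would have to send a weight-$p^l\chi$ space into a weight-$\chi'$ space for occurring weights $\chi,\chi'$, which is impossible since $p^l\chi \in pX^*$ but $\chi' \notin pX^*$. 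Applying this to $S$ acting on $V_{\overline{k}}$ shows $H_{\overline{k}}$ is a $\overline{k}$-subspace of $V_{\overline{k}}$, whence $H$ is a $k$-subspace of $V$. I expect this stage to be the main obstacle, and it is exactly here that the non-divisibility hypothesis is essential: without it, twisted graphs of $p$-power maps survive (cf.\ Remark~\ref{remprop}).

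For the second stage, $H$ is now a sub-vector-group, so $\Lie(H) = H$ inside $\Lie(V) = V$; since $H$ is ${\rm{R}}_{k'/k}(Z')$-stable, differentiating the action at $0$ shows $H$ is stable under the Lie algebra action of $\mathfrak{z}' = \Lie({\rm{R}}_{k'/k}(Z'))$ on $V$. Over $\overline{k}$ the $S$-stable subspace $H_{\overline{k}}$ decomposes as $\bigoplus_\chi H_{\overline{k},\chi}$ with $H_{\overline{k},\chi} = H_{\overline{k}} \cap V'_{\overline{k},\chi}$, and $\mathfrak{z}'_{\overline{k}}$ acts on $V'_{\overline{k},\chi}$ through the scalar character $d\chi$. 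The image of $d\chi$ is an ideal of $k'\otimes_k\overline{k}$ containing an integer $\not\equiv 0 \pmod p$, hence a unit, precisely because $\chi \notin pX^*$; so this image is all of $k'\otimes_k\overline{k}$, and therefore each $H_{\overline{k},\chi}$, being stable under that scalar action, is a $(k'\otimes_k\overline{k})$-submodule of $V'_{\overline{k},\chi}$. Thus $H_{\overline{k}}$ is a $(k'\otimes_k\overline{k})$-submodule of $V'_{\overline{k}}$, so by faithfully flat descent $H$ is a $k'$-submodule $W'$ of $V'$. Since ${\rm{R}}_{k'/k}$ of a $k'$-vector space is that space regarded over $k$, this gives $H = {\rm{R}}_{k'/k}(W')$; uniqueness of $W'$ is immediate, e.g.\ by comparing $k$-points, where ${\rm{R}}_{k'/k}(W')(k) = W'$.
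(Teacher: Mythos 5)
Your overall architecture is genuinely different from the paper's: you pass to $\overline{k}$, work with the maximal torus $S$ of ${\rm{R}}_{k'/k}(Z')$ and the weight decomposition of $V'\otimes_k\overline{k}$, try to prove first that $H$ is a linear subspace and then promote $k$-linearity to $k'$-linearity via surjectivity of $d\chi$; the paper instead stays over $k_s$, reduces to $k'$ a field and $Z'\simeq{\rm{GL}}_1$, normalizes the weights, and runs a descending induction on weights using the attractor functor $U_G(\mu)$ of \cite[2.1]{pred}, never isolating a ``$H$ is a $k$-subspace'' step. Your stage 2 is correct (and the $d\chi$-image argument is a clean substitute for the paper's use of surjectivity of the $m$-th power map on ${k'}^{\times}$). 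The problem is stage 1, which you yourself flag as the main obstacle: as written it does not close.

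Concretely, two things are missing in the Frobenius-component argument applied to $V_{\overline{k}}\twoheadrightarrow V_{\overline{k}}/H_{\overline{k}}$. First, what you need about the quotient is not that it is a vector group (that is automatic: it is smooth, connected, commutative and killed by $p$ over the perfect field $\overline{k}$, so Witt-type subquotients are irrelevant), but that it carries an $S$-\emph{equivariant} linear structure; this is a nontrivial linearization statement for commutative $p$-torsion groups with torus action (of the kind treated in \cite[App.\,B]{pred}), and your argument supplies no substitute for it. Second, even granting such a structure, equivariance forces a component of Frobenius degree $p^{l}$ to carry the weight-$\chi$ part of the \emph{source} into the weight-$p^{l}\chi$ part of the \emph{target}; so to kill the components with $l\ge 1$ you must know that the occurring weights of the target $V/H$ avoid $p\,{\rm{X}}(S)$ --- but that is exactly what is unknown at this point (it is essentially equivalent to $H$ being a weight-graded subspace), and your stated contradiction ``$p^{l}\chi\in p{\rm{X}}(S)$ but $\chi'\notin p{\rm{X}}(S)$'' silently assumes it. The graph-of-Frobenius example underlying Remark~\ref{remprop} (weights $\chi,p\chi$ on $V$, $H=\{(x,x^{p})\}$, quotient of weight $p\chi$) shows that quotient weights really can land in $p{\rm{X}}(S)$ when the hypothesis fails, so this is the heart of the matter, not bookkeeping. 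A viable repair is to run the same computation on the \emph{inclusion} $H^{0}\hookrightarrow V$ instead: there the target is $V$, whose weights avoid $p{\rm{X}}(S)$ by hypothesis, so all components of degree $p^{l}$, $l\ge 1$, vanish; but then you need an $S$-equivariant linear structure on $H^{0}$ (again the linearization input), plus a short argument that $H$ is connected (an $S$-stable nonidentity component $v+H^{0}$ would produce a nonzero $S$-fixed point in $V/H^{0}$, which cannot exist once $H^{0}$ is known to be weight-graded). So stage 1 requires a substantive external ingredient --- either such a linearization theorem or the paper's dynamic argument --- and without it the proof has a genuine gap.
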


As in Remark \ref{remprop},
the hypothesis on the $\mathfrak{z}'$-action cannot be dropped.
The proof of Lemma \ref{w} is rather long, so let us first see how to use it to
complete the proof of Proposition \ref{glvprop}.

We write $k' = \prod k'_i$ for fields $k'_i$,
and correspondingly $V' = \prod V'_i$ for a vector space $V'_i$ over $k'_i$.
Let $G'_i$ denote the $k'_i$-fiber of $G'$ (so $G = \prod {\rm{R}}_{k'_i/k}(G'_i)$).
The $k'$-torus $T'$ contains the central torus $Z'$, and we recall that ${V'}^{Z'} = 0$.
Granting Lemma \ref{w}, the Zariski closure $V_0$ of $(\Gamma \cap V)(k)$ in $V$
is ${\rm{R}}_{k'/k}(W')$ for
a unique $k'$-submodule $W'$ of $V'$.

Note that $V_0$ is $G$-stable,
due to the surjectivity of $\Gamma \rightarrow G$,
so consideration of $k$-points shows that $W'$ is a $G'$-stable $k'$-submodule of $V'$
(as $k = k_s$).
Since $V_0(k) = (\Gamma \cap V)(k)$,
we see that $\Gamma(k) \cap V(k)$ coincides with the $k'$-submodule $W'$ of $V'$.
The $k$-group $\Gamma$ is therefore the full preimage in $G \ltimes V$ of
a smooth closed $k$-subgroup $\overline{\Gamma}$ in $G \ltimes (V/V_0)$
such that $\overline{\Gamma}(k) \cap (V/V_0)(k) = 0$.
Let $\overline{V} = V/V_0$.

The maximal central torus $Z$ of ${\rm{R}}_{k'/k}(Z')$
satisfies $V^Z = 0$.  Hence, $\overline{V}^Z = 0$ as well,
since $Z$ is a torus.  For any $\gamma = (g,\overline{v}) \in \overline{\Gamma}(k)$
the effect of $\gamma^{-1}$-conjugation on an element
$z \in Z(k)$ is the same as conjugation by $\overline{v}^{-1} = -\overline{v}$.
Thus, $\gamma^{-1} z \gamma = \overline{v}^{-1}z\overline{v} = (z, \overline{v} - z\overline{v})$, so
since $Z(k) \subseteq \overline{\Gamma}(k)$ we have $z\overline{v}-\overline{v} \in
\overline{\Gamma}(k) \cap
\overline{V}(k) = 0$.
Letting $z$ vary, it follows that $\overline{v} \in \overline{V}^Z = 0$.
In other words, $\gamma \in G(k)$ inside of $(G \ltimes V)(k)$.
That is, $\overline{\Gamma} \subseteq G$ inside of $G \ltimes V$.  But
the projection $\overline{\Gamma} \rightarrow G$ is surjective, so
$\overline{\Gamma} = G$ inside of $G \ltimes \overline{V}$ as required.
This completes the proof of Proposition \ref{glvprop}, conditional on Lemma \ref{w}.

\begin{proof}[Proof of Lemma {\rm{\ref{w}}}]
By Galois descent (in view of the uniqueness claim), we may and do assume $k = k_s$.
The uniqueness of $W'$ is clear, so the problem is its existence.
Let $\{k'_i\}$ be the set of factor fields of $k'$, and $Z'_i$ the $k'_i$-fiber of $Z'$.
Consider the $k$-group decomposition ${\rm{R}}_{k'/k}(Z') = \prod {\rm{R}}_{k'_i/k}(Z'_i)$,
so the maximal $k$-torus $Z$ of ${\rm{R}}_{k'/k}(Z')$ has the form $\prod Z_i$
for the maximal $k$-torus $Z_i$ in ${\rm{R}}_{k'_i/k}(Z'_i)$.
Note that the character groups of $Z'_i$ and $Z_i$ naturally coincide
since $k'_i/k$ is purely inseparable (and $k = k_s$).
We may replace $H$ with $H^0$, so $H$ is connected.
Our goal is to show that $H$ contains the image of its projection into each
${\rm{R}}_{k'_i/k}(V'_i)$.

Choose an index $i_0$ and let $S = \prod_{i \ne i_0} {\rm{R}}_{k'_i/k}(Z'_i)$,
so $V^S = {\rm{R}}_{k'_{i_0}/k}(V'_{i_0})$ where $V'_i$ is the $k'_i$-factor of $V'$.
The centralizer $H^S$ for the $S$-action on $H$ is smooth and connected (since $S$ is a torus
and $H$ is smooth and connected), and the image of
$H^S$ in ${\rm{R}}_{k'_{i_0}/k}(V'_{i_0})$ is the same
as the image of $H$ since the formation of $S$-invariants commutes
with the formation of images under homomorphisms between
smooth connected affine $k$-groups.  Thus, we can replace $(k'/k, V', Z',H)$ with $(k'_{i_0}/k, V'_{i_0}, Z'_{i_0},H^S)$ to reduce to the case that $k'$ is a field.

With $k'$ now arranged to be a field, we will show that $H$
contains the image of its projection into each ${\rm{R}}_{k'/k}(V'_{\chi'})$,
with $V'_{\chi'}$ varying through the $Z'$-weight spaces of $V'$.
Pick $\chi'_0$ such that $V'_{\chi'_0} \ne 0$, and let $T' = (\ker \chi'_0)_{\rm{red}}^0$ be
the codimension-1 torus  in $Z'$ killed by $\chi'_0$.  Let $T$ be the maximal $k$-torus in
${\rm{R}}_{k'/k}(T')$, so ${\rm{X}}(T)$ is naturally identified with ${\rm{X}}(T')$.
Clearly $V^T = {\rm{R}}_{k'/k}({V'}^{T'})$, and this is the direct product of the factors ${\rm{R}}_{k'/k}(V'_{\psi'})$
where $\psi' \in {\rm{X}}(Z'/T') \simeq \mathbf{Z}$ varies through those $\chi'$ that are
rational multiples of $\chi'_0$.  Also, the
$T$-centralizer
$H^T$ is smooth and connected (as for a torus action on any smooth
connected affine group), and $H^T \rightarrow {\rm{R}}_{k'/k}(V'_{\chi'_{0}})$ has
the same image as $H$ (since $T$ acts trivially on ${\rm{R}}_{k'/k}(V'_{\chi'_{0}})$).
Since ${\rm{X}}(Z'/T')$ is saturated
in ${\rm{X}}(Z')$, we may replace $(V', Z', H)$ with $({V'}^{T'}, Z'/T', H^T)$
to reduce to the case that $Z' \simeq {\rm{GL}}_1$.

The image of $H$ in each ${\rm{R}}_{k'/k}(V'_{\chi'})$ has group of $k$-points
that is an additive subgroup $W'_{\chi'}$ of $V'_{\chi'}$ stable under
the action of ${\rm{R}}_{k'/k}(Z')(k) = {k'}^{\times}$ through
$\chi'$.  That is, writing $\chi'(t) = t^m$ with ${\rm{char}}(k)\nmid m$
(due to our hypothesis on the vanishing of the $\mathfrak{z}'$-invariants),
the surjectivity of the $m$th-power endomorphism of ${k'}^{\times}$
implies that $W'_{\chi'}$ is a $k'$-linear subspace of $V'_{\chi'}$
for every $\chi'$.  We can replace $V'_{\chi'}$ with $W'_{\chi'}$
for every $\chi'$ to arrange that $H \rightarrow {\rm{R}}_{k'/k}(V'_{\chi'})$ is surjective
for all $\chi'$.  Now we aim to prove $H = V$;
i.e., $H$ contains ${\rm{R}}_{k'/k}(V'_{\chi'})$ for all $\chi'$.

Fix a choice of $\chi'_0$. The idea is to find a functorial way
of selecting a $Z$-stable smooth closed $k$-subgroup $F(G)$ of any
smooth affine $k$-group $G$ equipped with a $Z$-action so that the following formal properties
hold:  (i) $F(V) = {\rm{R}}_{k'/k}(V'_{\chi'_0})$
(avoiding any reference to the linear structure
on $V'$ over $k'$ or the linear structure on $V$ over $k$!), (ii) $F$ is a ``projector'' in
the sense $F(F(G)) = F(G)$ for any $G$, (iii) $F$ carries surjections to surjections
(without smoothness hypotheses on these surjections).
Once such an $F$ is in hand, applying it to the $Z$-equivariant
quotient map $H \twoheadrightarrow {\rm{R}}_{k'/k}(V'_{\chi'_0}) = F(V)$
yields a surjective map $F(H) \twoheadrightarrow F(F(V)) = F(V)$, yet
$H$ is a $Z$-stable closed $k$-subgroup of $V$, so by functoriality of
the subgroup assignment $F$ we see that
$F(H)$ is a $k$-subgroup of $F(V)$ inside of $V$.  In other words,
$F(H)$ is a $k$-subgroup of $F(V) = {\rm{R}}_{k'/k}(V')$ inside of $V$ such that the
natural projection $V \twoheadrightarrow {\rm{R}}_{k'/k}(V'_{\chi'_0}) = F(V)$ extending
the identity on $F(V)$ restricts to a surjection on $F(H)$, forcing $F(H) = F(V)$.
This implies that $H$ contains $F(H) = F(V) = {\rm{R}}_{k'/k}(V'_{\chi'_0})$ inside of $V$,
as desired.

We will not quite find such a functor $F$ in the generality just described. Instead,
we will first carry out some preliminary reduction steps to acquire finer properties
for $\chi'_0$ (in comparison with all other $Z'$-weights on $V'$), and then we will construct such an $F$.
Our aim is to prove that $H$ contains ${\rm{R}}_{k'/k}(V'_{\chi'_0})$
for an arbitrary but fixed choice of $\chi'_0$, so we begin by
composing the isomorphism $Z' \simeq {\rm{GL}}_1$ with inversion
if necessary to ensure that $\chi'_{0}(t) = t^n$ with $n > 0$.
Proceeding by descending induction (or by contradiction),
we may and do assume that $H$ contains ${\rm{R}}_{k'/k}(V'_{\chi'})$
for any $\chi':t \mapsto t^m$ with $m > n$.  It is clearly harmless to pass to the quotient of
$V'$ by the span of such $V'_{\chi'}$ with ``weight'' larger than $n$
(and pass to the quotient of $H$ by the $k$-subgroup directly spanned by the
${\rm{R}}_{k'/k}(V'_{\chi'})$ for such $\chi'$), so now every positive weight $\chi'$ that
occurs in $V'$ has the form $\chi'(t) = t^m$
with $m \le n$.

The maximal
$k$-torus $Z$ in ${\rm{R}}_{k'/k}(Z') = {\rm{R}}_{k'/k}({\rm{GL}}_1)$ is the evident
${\rm{GL}}_1$.  Since the $k'$-group $Z'[n] = \mu_n$ is finite \'etale (as ${\rm{char}}(k) \nmid n$)
and $k'/k$ is purely inseparable, clearly ${\rm{R}}_{k'/k}(Z'[n]) = Z[n]$
and $V^{Z[n]} = {\rm{R}}_{k'/k}({V'}^{Z'[n]})$. Thus, exactness of the formation of $\mu_n$-invariants allows us to
replace $Z'$ with $Z'/Z'[n]$, replace
$V'$ with ${V'}^{Z'[n]}$ (which leaves $V'_{\chi'_0}$ unchanged),
and replace $H$ with $(H^{\mu_n})^0$.
This brings us to the case that $\chi'_0(t) = t$ and all other weights are negative.
In other words, each $\chi' \ne {\chi'_0}$ satisfying $V'_{\chi'} \ne 0$
necessarily has the form $\chi' = {\chi'_0}^m$ for some $m < 0$
(depending on $\chi'$).

Finally, we bring in a procedure that compatibly separates positive weights from negative weights
in both $H$ and ${\rm{R}}_{k'/k}(V')$ under the action of $Z = {\rm{GL}}_1$.
The basic construction we need is systematically
developed in \cite[2.1]{pred} in a functorial manner, and it goes as follows.
By \cite[Rem.\,2.1.11]{pred}, for any affine $k$-group scheme $G$ of finite type
and left action $\mu$ of ${\rm{GL}}_1$ on $G$, there is a closed $k$-subgroup scheme
$U_G(\mu)$ of $G$ representing the functor that
assigns to any $k$-algebra $R$ the subgroup of points $g \in G(R)$ such that
the $R$-scheme morphism ${\rm{GL}}_1 \rightarrow G_R$ defined by $t \mapsto t.g$
extends (necessarily uniquely) to an $R$-scheme morphism $\mathbf{A}^1_R \rightarrow G_R$.
More specifically, by \cite[Rem.\,2.1.11]{pred},
$U_G(\mu)$ is  smooth and connected when $G$ is smooth,
and if $G \rightarrow \overline{G}$ is a ${\rm{GL}}_1$-equivariant flat surjection
between connected affine $k$-groups of finite type
then $U_G(\mu) \rightarrow U_{\overline{G}}(\mu)$ is a flat quotient map.
Note also that if $G'$ is a closed $k$-subgroup of $G$ that is
stable under the action of ${\rm{GL}}_1$ then $U_{G'}(\mu) = G' \cap U_G(\mu)$.
The functor $(G,\mu) \rightsquigarrow U_G(\mu)$ will play the role of the functor $F$ in the
formal considerations given above.

Define the isomorphism $\lambda:
{\rm{GL}}_1 \simeq Z$ to be the inverse of the preferred isomorphism used above
(i.e., the inverse of the restriction of
${\rm{R}}_{k'/k}(\chi'_0):{\rm{R}}_{k'/k}(Z') \simeq {\rm{R}}_{k'/k}({\rm{GL}}_1)$
to maximal $k$-tori), and use this to identify the natural $Z$-action on $V$ with a
${\rm{GL}}_1$-action. (This is exactly the natural scaling action
arising from the evident linear structure on $V = {\rm{R}}_{k'/k}(V')$ over
$k$, in view of how we have changed $Z'$ in relation to $\chi'_0$.)  In this way we get smooth connected
$k$-subgroups $U_H(\lambda)$ in $H$ and $U_V(\lambda)$ in $V$,
with $U_H(\lambda) = H \cap U_V(\lambda)$ scheme-theoretically.
Every $\chi' \ne \chi'_0$ such that $V'_{\chi'} \ne 0$
necessarily has the form ${\chi'_0}^m$ with $m < 0$ (depending on $\chi'$), so
$U_V(\lambda) = {\rm{R}}_{k'/k}(V'_{\chi'_0})$.
In particular, the $k$-subgroup $\overline{H} := {\rm{R}}_{k'/k}(V'_{\chi'_0})$ in
$V$ is stable under the ${\rm{GL}}_1$-action through $\lambda$
and satisfies $U_{\overline{H}}(\lambda) = \overline{H}$.

Since we reduced to the case that the composite map
$H \hookrightarrow V \twoheadrightarrow {\rm{R}}_{k'/k}(V'_{\chi'})$ is surjective
for all $\chi'$, by taking $\chi' = \chi'_0$ we deduce that the ${\rm{R}}_{k'/k}(Z')$-equivariant map
$H \rightarrow \overline{H}$
is a ${\rm{GL}}_1$-equivariant flat quotient map.
Thus, the induced map $U_H(\lambda) \rightarrow U_{\overline{H}}(\lambda) = \overline{H}$
is surjective.  In other words, the surjective composite map
$H \hookrightarrow {\rm{R}}_{k'/k}(V') \twoheadrightarrow {\rm{R}}_{k'/k}(V'_{\chi'_0})$
restricts to a surjective map on the $k$-subgroup $U_H(\lambda)$.
But $U_H(\lambda)$ viewed inside
of $V$ is a $k$-subgroup of $U_V(\lambda) = {\rm{R}}_{k'/k}(V'_{\chi'_0})$,
so we conclude that  the inclusion  $U_H(\lambda) \hookrightarrow
{\rm{R}}_{k'/k}(V'_{\chi'_0})$ is an equality (since the natural projection
$V \twoheadrightarrow {\rm{R}}_{k'/k}(V'_{\chi'_0})$ restricts
to the identity on the $k$-subgroup ${\rm{R}}_{k'/k}(V'_{\chi'_0})$).   This
shows that the $k$-subgroup $H$ in $V$ contains
${\rm{R}}_{k'/k}(V'_{\chi'_0})$, as required.
\end{proof}

\subsection{A generalization}

In the general setting
introduced before Example \ref{exglv} there is no ``linear structure''
imposed on $V$ or the $G$-action on $V$.  In that generality we have the following
replacement for Proposition \ref{glvprop} away from characteristic~2:

\begin{theorem}\label{mainthm}
Using notation and hypotheses as at the start of \S{\rm{\ref{intro}}},
assume ${\rm{char}}(k) \ne 2$ and $V^T = 0$ for some maximal
$k$-torus $T$ of $G$. Let $\Gamma$ be a smooth closed $k$-subgroup of $G \ltimes V$
such that the composite map $\Gamma \rightarrow G$ is surjective.

The scheme-theoretic intersection $V_0 := \Gamma \cap V$ is smooth
and $G$-stable,
and there is a unique $v \in (V/V_0)(k)$ such that
$\Gamma$ is the preimage under
$G \ltimes V \twoheadrightarrow G \ltimes (V/V_0)$ of
the $v$-conjugate of $G$ in $G \ltimes (V/V_0)$.
\end{theorem}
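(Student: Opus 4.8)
The plan is to follow the architecture of the proof of Proposition~\ref{glvprop}, but to replace the structural input of Lemma~\ref{w} --- which is unavailable without a linear structure, as Remark~\ref{remprop} shows --- by a softer argument establishing only the smoothness of $\Gamma \cap V$ and the $v$-conjugate description, not the ${\rm{R}}_{k'/k}(W')$ form of the intersection. By Galois descent, using the uniqueness of $v$, we may assume $k = k_s$. Since $\Gamma \to G$ is surjective and $\Gamma$ is $k$-smooth, $\Gamma(k)$ is Zariski-dense in $\Gamma$ and hence has Zariski-dense image in $G$; thus the Zariski closure $V_0$ of $\Gamma(k) \cap V(k)$ in $V$ is a smooth connected closed $k$-subgroup that is $G$-stable (being normalized by $\Gamma(k)$), and $V_0 \subseteq \Gamma \cap V$ as closed subschemes. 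The uniqueness of $v$ reduces, as in the lemma preceding this theorem, to $(V/V_0)(k)^G = 0$, which follows from $(V/V_0)^T = 0$: the $T$-equivariant surjection $V \twoheadrightarrow V/V_0$ of smooth connected $k$-groups restricts to a surjection $V^T \twoheadrightarrow (V/V_0)^T$, and $V^T = 0$ by hypothesis.

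Next I would put a maximal $k$-torus into $\Gamma$ and pin down its centralizer. A maximal $k$-torus $T \subseteq \Gamma$ maps onto a maximal $k$-torus of $G$, and since $V$ is unipotent every maximal $k$-torus of $G \ltimes V$ is a $(G \ltimes V)(k)$-conjugate of one in $G$; so, by Grothendieck's rational conjugacy theorem over $k = k_s$, after a harmless conjugation of $\Gamma$ (which leaves $V_0$ unchanged, as $V_0$ is $G$-stable and $V$ is commutative) we may take $T$ to be any prescribed maximal $k$-torus of $G$, necessarily maximal in $G \ltimes V$. The Cartan $Z_\Gamma(T)$ maps onto $Z_G(T)$ but lies inside $Z_{G \ltimes V}(T) = Z_G(T) \ltimes V^T = Z_G(T)$, so $Z_\Gamma(T) = Z_G(T) = {\rm{R}}_{k'/k}(T')$ for the unique maximal $k'$-torus $T' \subseteq G'$ with $T \subseteq {\rm{R}}_{k'/k}(T')$; in particular ${\rm{R}}_{k'/k}(T') \subseteq \Gamma$. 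Write $\overline{V} := V/V_0$ and let $\overline{\Gamma}$ be the image of $\Gamma$ in $G \ltimes \overline{V}$; it is smooth, surjects onto $G$, contains ${\rm{R}}_{k'/k}(T')$, and satisfies $\overline{\Gamma}(k) \cap \overline{V}(k) = 0$. It now suffices to prove that $\overline{\Gamma}$ contains a $\overline{V}(k)$-conjugate of $G$: lifting the conjugator along $V(k) \twoheadrightarrow \overline{V}(k)$ and conjugating $\Gamma$ to arrange $\overline{\Gamma} \supseteq G$, one gets $\overline{\Gamma} = G \ltimes (\overline{\Gamma} \cap \overline{V})$, so $\overline{\Gamma} \cap \overline{V}$ is a smooth direct factor with trivial $k$-points, hence trivial, so $\overline{\Gamma} = G$; then $\Gamma \subseteq G \ltimes V_0$ while $V_0 \subseteq \Gamma$ automatically, forcing $\Gamma \cap V = V_0$ (in particular smooth) and realizing $\Gamma$ as the preimage of a $v$-conjugate of $G$ in $G \ltimes \overline{V}$.

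To show $\overline{\Gamma}$ contains a conjugate of $G$, I would reduce to the case that $G'$ has semisimple rank one by intersecting, for each root $a$ of $(G', T')$, with ${\rm{R}}_{k'/k}(G'_a) \ltimes V$ where $G'_a := \langle T', U'_a, U'_{-a} \rangle$: the group $\Gamma \cap ({\rm{R}}_{k'/k}(G'_a) \ltimes V)$ surjects onto ${\rm{R}}_{k'/k}(G'_a)$, has the same $V_0$, and meets the same hypotheses, and from the rank-one conclusion one recovers each ${\rm{R}}_{k'/k}(U'_a)$ inside $\overline{\Gamma}$ after adjusting conjugators to be compatible with the fixed copy of ${\rm{R}}_{k'/k}(T') \subseteq \overline{\Gamma}$; these root groups together with ${\rm{R}}_{k'/k}(T')$ generate $G$ over $k = k_s$. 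The rank-one case is the main obstacle. The $k$-point vanishing $\overline{\Gamma}(k) \cap \overline{V}(k) = 0$ does not by itself preclude a nonreduced (e.g.\ $\alpha_p$-type) part of the scheme-theoretic intersection $\overline{\Gamma} \cap \overline{V}$, so there one must prove ${\rm{Lie}}(\overline{\Gamma}) \cap {\rm{Lie}}(\overline{V}) = 0$. I expect this to come from the $T$-stability of that subspace together with ${\rm{Lie}}(\overline{V})^T = {\rm{Lie}}(\overline{V}^T) = 0$ (smoothness of torus-centralizers), combined with the symmetry under the unique lift $\widetilde{w} \in \overline{\Gamma}(k)$ of the Weyl element and an analysis of $\widetilde{w}^2$, whose image in $G'_a$ is an element of order dividing $2$; it is precisely in making the resulting averaging argument work that ${\rm{char}}(k) \ne 2$ is used (for instance, to split the $\widetilde{w}^2$-involution on $\overline{V}$ using $1/2$), the argument degenerating in characteristic $2$. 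Thus the delicate parts of the proof are this Lie-algebra vanishing and the compatible choice of conjugators across the various root subgroups.
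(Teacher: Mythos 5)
You should first note that the paper contains no proof of Theorem~\ref{mainthm} to compare against: the appendix explicitly defers the proof elsewhere and states that it uses the structure theory of pseudo-reductive groups from \cite{pred}, i.e.\ techniques ``of an entirely different nature'' from the proof of Proposition~\ref{glvprop}. That remark already signals that a direct imitation of the \S\ref{redsec} architecture, which is what you propose, is not expected to carry the day, and indeed your sketch leaves exactly the hard content unproved. The soft steps are fine and do parallel \S\ref{redsec}: Galois descent via uniqueness of $v$, $G$-stability and smoothness of the Zariski closure $V_0$ of $\Gamma(k)\cap V(k)$, uniqueness of $v$ from $(V/V_0)^T=0$, and $Z_\Gamma(T)={\rm{R}}_{k'/k}(T')\subseteq\Gamma$. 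But everything after that is an outline of hopes rather than an argument.

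Concretely: (a) the rank-one case, which you yourself identify as ``the main obstacle,'' is not proved --- ``I expect this to come from\dots'' an unspecified averaging argument using $1/2$ is not a proof, and the claimed \emph{unique} lift $\widetilde{w}\in\overline{\Gamma}(k)$ of a Weyl element is unjustified (lifts are only well defined up to translation by $\overline{\Gamma}(k)\cap\overline{V}(k)$ and, scheme-theoretically, up to the possibly infinitesimal $\overline{\Gamma}\cap\overline{V}$, whose triviality is what you are trying to establish). (b) The proposed mechanism for ${\rm{Lie}}(\overline{\Gamma})\cap{\rm{Lie}}(\overline{V})=0$ cannot work as stated: $T$-stability plus ${\rm{Lie}}(\overline{V})^T=0$ only kills the weight-zero part, and a $T$-stable subspace concentrated in nonzero weights is exactly what occurs in Example~\ref{noisom}, where the offending intersection is the infinitesimal $T$-stable subgroup $\ker F_{W/k}\cong\alpha_2^{2n}$ sitting in nonzero weight spaces. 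Ruling out such infinitesimal ($\alpha_p$-type) intersections when ${\rm{char}}(k)=p>2 $ is the real content of the theorem; note that, unlike Lemma~\ref{w}, there is no hypothesis here (such as ${V'}^{\mathfrak{z}'}=0$) controlling the weights relative to $p$, and the true role of ${\rm{char}}(k)\ne 2$ is the absence of exceptional isogenies with infinitesimal unipotent kernel (the ${\rm{SO}}_{2n+1}\to{\rm{Sp}}_{2n}$ phenomenon, cf.\ \cite{pyu}), which your sketch never engages. (c) The reduction to semisimple rank one is also shaky: $\Gamma\cap({\rm{R}}_{k'/k}(G'_a)\ltimes V)$ is flat over ${\rm{R}}_{k'/k}(G'_a)$ with fibers torsors under the possibly non-smooth $\Gamma\cap V$, so it need not be smooth and hence does not satisfy the hypotheses you want to induct with; and the passage from per-root conjugators $v_a$ to a single $v$ is a genuine compatibility problem that you only flag. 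So the proposal is a plausible-looking skeleton whose decisive steps are missing, and the authors' own statement indicates the actual proof proceeds by a different route (pseudo-reductive structure theory) rather than by repairing this skeleton.
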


The proof of Theorem \ref{mainthm} will appear elsewhere, as it
involves techniques of an entirely different nature from the proof of Proposition \ref{glvprop}:
the structure theory of pseudo-reductive groups developed in \cite{pred}
(even though the statement of Theorem \ref{mainthm} does not involve pseudo-reductive groups).

Theorem \ref{mainthm} is probably valid
in characteristic~2 under additional restrictions,
including that the maximal geometric semisimple quotient $G_{\overline{k}}^{\rm{ss}}$
has no simple factor of type C.
The necessity of additional hypotheses related to type C in a characteristic-2 version of Theorem \ref{mainthm}
is not merely a matter of technique, but rather is due to explicit counterexamples.  Such counterexamples
illuminate the meaning of Theorem \ref{mainthm} and help one to appreciate
the good fortune of the characteristic-free nature of Proposition \ref{glvprop}, so we now
provide a family of counterexamples to a characteristic-2 version of Theorem \ref{mainthm}
without type-C restrictions.

\begin{example}\label{noisom}
Let $k$ be a field of characteristic~2, and
$(V,q)$ a non-degenerate quadratic space of rank $2n+1$ over $k$.
The associated symmetric  bilinear form $B_q$ on $V$ is alternating
with $V^{\perp}$ of dimension 1, so there is an induced non-degenerate alternating
form $\overline{B}_q$ on the quotient $\overline{V} = V/V^{\perp}$ of dimension $2n$.
Thus, we get a map ${\rm{SO}}(q) \rightarrow {\rm{Sp}}(\overline{B}_q)$.

For example, if $q = x_0^2 + x_1 x_2 + \dots + x_{2n-1} x_{2n}$ is
the standard split quadratic form on $V = k^{2n+1}$ then
$V^{\perp} = k e_0$ and the map
\begin{equation}\label{somap}
{\rm{SO}}_{2n+1} = {\rm{SO}}(q) \rightarrow {\rm{Sp}}(\overline{B}_q) = {\rm{Sp}}_{2n}
\end{equation}
is projection onto the lower-right $2n \times 2n$ block matrix. This map is surjective
and its kernel consists
of lower-triangular unipotent matrices whose entries below the diagonal all vanish
apart from $\alpha_2$ entries along the left column (below the upper-left entry of 1).
As a group scheme, this kernel is $\alpha_2^{2n}$ with components corresponding to
matrix entries along the left column,
and the induced action on it by ${\rm{Sp}}_{2n}$ is via
the identification of $\alpha_2^{2n}$ as the Frobenius kernel in the standard
representation space of rank $2n$.
(To put this in perspective, we note that by
 \cite[Lemma 2.2]{pyu},
over an algebraically closed field the only isogenies
between absolutely simple and connected semisimple groups
for which the kernel is nontrivial and unipotent
are the isogenies (\ref{somap}) in characteristic
2, up to an isomorphism on the source and target.)

In general, let $W$ denote the vector space ${\rm{Hom}}(V^{\perp}, \overline{V})$
corresponding to ``left column below the top entry'' in $\mathfrak{so}_{2n+1} \subset \mathfrak{gl}_{2n+1}$,
and equip it with the standard action by ${\rm{Sp}}(\overline{B}_q) \subset {\rm{GL}}(\overline{V})$.
Viewing $W$ as a vector group over
$k$, let $F_{W/k}:W \rightarrow W^{(2)}$ denote its relative Frobenius isogeny.
There is an exact sequence
$$1 \rightarrow \ker F_{W/k} \rightarrow {\rm{SO}}(q) \rightarrow
{\rm{Sp}}(\overline{B}_q) \rightarrow 1$$
in which the induced left action of the quotient term on the commutative kernel is the natural one
arising from the ${\rm{Sp}}(\overline{B}_q)$-action on $W$.
The pushout along the
${\rm{Sp}}(\overline{B}_q)$-equivariant inclusion $\ker F_{W/k} \hookrightarrow W$
is an exact sequence
\begin{equation}\label{vseq}
1 \rightarrow W \rightarrow E \rightarrow {\rm{Sp}}(\overline{B}_q) \rightarrow 1
\end{equation}
in which $E$ contains the subgroup $\Gamma := {\rm{SO}}(q)$ mapping
via a degree-$2^{2n}$ infinitesimal isogeny onto ${\rm{Sp}}(\overline{B}_q)$
and the induced left action of ${\rm{Sp}}(\overline{B}_q) \subset {\rm{GL}}(\overline{V})$ on
the commutative kernel $W = {\rm{Hom}}(V^{\perp}, \overline{V})$ is the natural one.
(By choosing a basis of the line $V^{\perp}$, we get an
${\rm{Sp}}(\overline{B}_q)$-equivariant isomorphism $W \simeq \overline{V}$.)
Thus, for any maximal torus $T$ in ${\rm{Sp}}(\overline{B}_q)$
the subspace $W^T$ vanishes because the center $\mu_2$ of the symplectic group
acts by ordinary scaling (and so even $W^{\mu_2}$ vanishes).
We claim that (\ref{vseq}) splits as a semidirect product, so inside of
${\rm{Sp}}(\overline{B}_q) \ltimes \overline{V}$ we get
a counterexample to the conclusion of Theorem \ref{mainthm}
in type-${\rm{C}}$ cases in characteristic~2.

To construct a splitting of (\ref{vseq}), more generally consider an arbitrary short exact sequence
\begin{equation}\label{esplit}
1 \rightarrow U \rightarrow E \rightarrow {\rm{Sp}}_{2n} \rightarrow 1
\end{equation}
of linear algebraic groups over a field $k$, where $U \simeq \mathbf{G}_{\rm{a}}^N$ is a vector group.
Assume that the induced action of the center $\mu = \mu_2$ of ${\rm{Sp}}_{2n}$ on $U$
is the natural scaling action. We shall prove that (\ref{esplit}) is split.

Note
that $E$ is necessarily a smooth connected affine $k$-group.
The exact sequence (\ref{esplit})
pulls back to an extension of $\mu$ by $U$ lifting
the natural linear action of $\mu$ on $\mathbf{G}_{\rm{a}}^N$.  We claim that
this pullback splits, so $\mu$ lifts
into $E$.   To prove that the pointed set ${\rm{Ex}}_k(\mu_2, \mathbf{G}_{\rm{a}}^N)$
of such extensions is trivial, we immediately reduce to the case $N = 1$.
The case ${\rm{char}}(k) \ne 2$ is easy (since then
$\mu_2 = \mathbf{Z}/2\mathbf{Z}$ and doubling is an automorphism of
$\mathbf{G}_{\rm{a}}$).  Suppose instead
that ${\rm{char}}(k) = 2$.  Then consideration of Frobenius kernels
reduces the problem to the vanishing of ${\rm{Ex}}_k(\mu_2,\alpha_2)$ over
fields of characteristic $p = 2$ (relative to the usual scaling action of $\mu_2$ on $\alpha_2$),
and this in turn follows from an easy calculation with $p$-Lie algebras (and works
just as well with $\mu_p$ and $\alpha_p$ for $p > 2$).

For $k$-subgroup $\mu' \subset E$ isomorphically lifting the center $\mu$ of
${\rm{Sp}}_{2n}$, consider the scheme-theoretic centralizer $E^{\mu'}$
of $\mu'$ in $E$.  By
\cite[Prop.\,A.8.10]{pred}, $E^{\mu'}$ is smooth and ${\rm{Lie}}(E^{\mu'})$
is equal to ${\rm{Lie}}(E)^{\mu'}$.  By exactness of $\mu'$-invariants on linear representations,
the surjective map ${\rm{Lie}}(E) \rightarrow \mathfrak{sp}_{2n}$ (with kernel
${\rm{Lie}}(U)$) induces a surjective map
${\rm{Lie}}(E)^{\mu'} \rightarrow \mathfrak{sp}_{2n}$. This latter surjection is an isomorphism since
its kernel is ${\rm{Lie}}(U)^{\mu'} = {\rm{Lie}}(\mathbf{G}_{\rm{a}}^N)^{\mu} =
({\rm{Lie}}(\mathbf{G}_{\rm{a}})^{\mu})^N = 0$.  Hence,
$(E^{\mu'})^0 \rightarrow {\rm{Sp}}_{2n}$ is an isogeny with \'etale kernel.
But the connected semisimple group ${\rm{Sp}}_{2n}$ is simply connected, so
$(E^{\mu'})^0 \rightarrow {\rm{Sp}}_{2n}$ is an isomorphism; this is the required splitting.
\end{example}

\bibliographystyle{amsplain}

\end{document}